\documentclass[reqno,a4paper,12pt]{amsart}
\usepackage{amscd,amssymb,amsmath,amsfonts,mathrsfs}
\usepackage{hyperref}

\usepackage{color}

\definecolor{darkgreen}{cmyk}{1,0,1,.2}
\definecolor{m}{rgb}{1,0.1,1}


%
%
\newcommand\supp{\operatorname{supp}}

\newcommand\rank{\operatorname{rank}}

\newcommand\Kp{{\mathscr{K}}}

\renewcommand\lq{\leqslant}
\newcommand\gq{\geqslant}
\newcommand\E{\mathcal{E}}

\newcommand\diag{\operatorname{diag}}

\newcommand\ds{\displaystyle}

\renewcommand\Im{\operatorname{Im}}
\newcommand\C{\mathbb C}

\renewcommand\r{{\mathcal{R}}}

\newcommand\De{\Delta}

\renewcommand\H{\mathcal{H}}

\newcommand\N{\mathbb N}

\newcommand\R{\mathbb R}
\newcommand\st{\text{ such that }}
\newcommand\Z{\mathbb Z}

\newcommand\MM{\mathcal{M}}
\renewcommand\L{\mathcal{L}}
\newcommand\D{\mathcal D}

\newcommand\G{\mathcal{G}}
\newcommand\ts{{\otimes}}
\newcommand\rt{{\rtimes}}
\newcommand\lt{{\ltimes}}
\newcommand\rtr{{\rtimes_{r}}}

\newcommand\la{\lambda}

\newcommand\Ga{{\Gamma}}

\newcommand\ga{{\gamma}}
\newcommand\al{{\alpha}}
\newcommand\lto{{\longrightarrow}}
\newcommand\defi{{\stackrel{\text{def}}{=\!=}}}

\newcommand\M{{M}}
\newcommand\U{\operatorname{U}}
\renewcommand\P{\operatorname{P}}
\newcommand\ka{\kappa}
\newcommand\ue{\operatorname{U}_n^{\varepsilon,E}}
\newcommand\pe{\operatorname{P}_n^{\varepsilon,E}}
\newcommand\eps{\varepsilon}

\theoremstyle{plain}
\newtheorem{theorem}{Theorem}[section]
\newtheorem{proposition}[theorem]{Proposition}
\newtheorem{corollary}[theorem]{Corollary}

\newtheorem{lemma}[theorem]{Lemma}
\newtheorem{definition}[theorem]{Definition}
\theoremstyle{definition}
\newtheorem{remark}[theorem]{Remark}
\newtheorem{example}[theorem]{Example}
\newtheorem{notation}[theorem]{Notation}
\setcounter{section}{-1}

\begin{document}
\title[Groupoids decomposition,  propagation and $K$-theory]{Groupoids decomposition,  propagation and operator $K$-theory }
 \author[H. Oyono-Oyono]{Herv\'e Oyono-Oyono}
 \address{Universit\'e de Lorraine, Metz , France}
\email{herve.oyono-oyono@math.cnrs.fr}

\maketitle

\begin{center}
{\it Dedicated to the memory of Etienne Blanchard}
\end{center}

\begin{abstract}
In this paper, we streamline the technique of groupoids coarse decomposition for purpose of $K$-theory computations of groupoids crossed products. This technique was first
introduced by Guoliang Yu in his proof of   Novikov conjecture for groups with finite
asymptotic dimension. The main tool we use for these  computations  is  controlled operator $K$-theory.
\end{abstract}

\begin{flushleft}{\it Keywords: Groupoids,
   Operator   $K$-theory,  Coarse Geometry, Baum-Connes Conjecture.}

\medskip

{\it 2010 Mathematics Subject Classification: 19K35, 22A22, 46L80}
\end{flushleft}

\tableofcontents
\section{Introduction}
The concept of coarse decomposability for locally compact groupoids  was introduced by several authors
\cite{bd,gwy2,oy4,w} in order to compute  $K$-theory of  reduced $C^*$-algebras and of reduced crossed product algebras of locally compact groupoids. It generalizes the "cut-and-pasting" technique developed by G. Yu in \cite{yuasymp} to prove the Novikov conjecture for groups with finite asymptotic dimension. The "cut-and-pasting"  has been then extended by E.Guentner, R.Tessera and G. Yu in  \cite{gty1} in order to  study topological rigidity of manifolds and in \cite{gty2} in their approach of the Borel conjecture. In these works, they consider a class of finitely
 generated groups which satisfy a metric property called { finite decomposition complexity}.   This property can be interpreted in term of decomposition complexity  of the coarse groupoid associated to the metric space, which leads naturally to extend this notion to locally compact groupoids. A first generalization  was provided by E. Guentner, R. Willet and G. Yu in \cite{gwy1} in order to study the dynamical properties of finitely generated  group actions  on locally compact spaces with   at each  order  a   one step  decomposition  into pieces with "finite dynamic".  In \cite{gwy2},  the same authors consider the case of finitely generated   group actions 
 on locally compact space  which,  given a sequence of  orders, decompose in a finite number of steps   into pieces with "finite dynamic". They give  a new proof of the Baum-Connes conjecture (with trivial coefficients) for these action groupoids. This approach rises a large amount of interest since it does not involve infinite dimension analysis and can be generalized to computations in non $C^*$-algebraic situations (for instance to $\ell^p$-crossed products as considered in \cite{ch}).  The main tool used in this proof is quantitative $K$-theory.
Quantitative  $K$-theory was first introduced in \cite{yuasymp} for obstruction algebras in order to prove the Novikov conjecture for groups with finite asymptotic dimension. It has been then extended in  \cite{oy2} to the setting of $C^*$-algebras equipped with a filtration arising from a length and  in \cite{dell} to  the general framework of $C^*$-algebras filtered by  an abstract coarse structure. In \cite{oy4} was stated a controlled Mayer-Vietoris exact sequence in quantitative $K$-theory  associated to decomposition in "ideals at order $r$"  which turned out to be tailored for $K$-theory computations under groupoid  in decomposability (see \cite{dell} for the extension  to general filtrations). It has been apply in \cite{bd}, to the K\"unneth formula in $K$-theory for groupoid $C^*$-algebras and crossed  product algebras.  Loosely speaking, we consider  decomposition of the  set of elements  of a given order of a groupoid as the union of two open subgroupoids  (see Definition \ref{definition-remark}). Following \cite{gwy2}, we say that a locally compact groupoid $\G$ has finite complexity decomposition with respect to a family $\D$ of open subgroupoids if starting with a given sequence  of orders, then iterating the above decomposition ends up with elements belonging to  $\D$ in a finite number of steps (see  Definition \ref{definition-finite-decomposition}). The main result of this paper is the following:

\smallbreak

Let $\G$ be locally compact groupoid with  finite decomposition complexity with respect to a family $\D$ of relatively clopen subgroupoids (see Definition \ref{definition-relatively-clopen}) and let $f:A\to B$ a homomorphism of $\G$-algebras. If the morphism
$$K_*(A\rtr\H)\to K_*(B\rtr \H)$$ induced in $K$-theory by $f$ is a an isomorphism for any $\H$ in $\D$, then so is $$K_*(A\rtr \G)\to K_*(B\rtr \G).$$

\smallbreak

 We then extend this result to  morphisms induced  by elements in
$KK_*^\G(A,B)$  and we give an application of this result to the Baum-Connes conjecture for locally groupoids admitting a $\ga$-element in sense of  \cite{tuhyp}.
These stability results  should be compare with those obtained by R. Willet in \cite{w} using different technics.

\smallbreak

{\it Outline of the  paper.}  In Section $1$, we first recall some basic definitions
concerning locally compact groupoids and their actions. We introduce  the notion 
of $\G$-order for a locally compact groupoid $\G$ which can be viewed as the generalization  both of a length on a group and of a distance on a proper metric space.
Following the idea of \cite[Definition 3.14]{gwy2}, we then introduce the notion of
$\r$-decomposition for a $\G$-order $\r$, which leads to the concept of $\D$-decomposability of an open subgroupoid of $\G$ with respect to a family 
$\D$ of open subgroupoids and to $\D$-{\it finite decomposition complexity} ($\D$-fdc), generalizing  { finite dynamical complexity} defined in  \cite{gwy2}.

Section $2$ is devoted to some reminders on groupoid actions on $C^*$-algebras and their reduced crossed product algebras. 

In Section $3$, we introduce the primary  tool for the proof of  our main theorem, the controlled Mayer-Vietoris exact sequence in quantitative $K$-theory associated to a groupoid decomposition. We first review
from \cite{dell} the main features of quantitative $K$-theory for $C^*$-algebra filtered by an 
abstract coarse structure. We observe that $\G$-orders provide such a structure on 
crossed products algebras of a groupoid $\G$. We recall the definition of a controlled Mayer-Vietoris pair and we show that groupoid decompositions of order $\r$  give rise to 
controlled Mayer-Vietoris pairs. Eventually, we recall the statement of the 
controlled Mayer-Vietoris exact sequence in quantitative $K$-theory associated to a
controlled Mayer-Vietoris pair.

We prove in Section $4$ the main result of the paper. Although the proof is tedious, the principle is quite simple as it is the extension of the five lemma   to the setting of controlled exact sequences. We then extend our main result to the case of the morphisms induced
in $K$-theory by elements of $\G$-equivariant $KK$-theory. This is done by noticing that  every such elements is up to $KK$-equivalence given by a equivariant homomorphism.

In Section $5$, we give some applications to the Baum-Connes conjecture for locally compact groupoids.
We first recall from \cite{tuhyp} the statement of the Baum-Connes conjecture in the setting of locally compact groupoids and the definition of  $\ga$-elements. We end this section with heridity results of the Baum-Connes conjecture for groupoids with 
$\D$-{finite decomposition complexity} which admit a $\ga$-element in sense of 
\cite{tuhyp}.

\bigbreak

I  would like to thank warmly  J. Renault for the very helpful  discussions  we had concerning  relatively clopen subgroupoids. I am grateful to him for the comments and suggestions he made after carefully  reading this paper. For the occasion of his recent retirement, I  would like to express my deep admiration for him.

\bigbreak

This paper is dedicated to the memory of Etienne Blanchard from whom I learned almost everything I know about $C(X)$-algebras.
\section{Coarse decomposition for groupoid}
Coarse decomposability  for locally groupoids is the generalization of the concept  of 
decomposability for a family of metric spaces introduced in \cite{gty2}.  
In this section, after some reminders concerning locally groupoids and theirs actions, we introduced for a locally compact groupoid $\G$ the notion of $\G$-orders 
generalizing on one hand distances on metric spaces and on the other hand  lengths on groups.  Following ideas of \cite{gwy2}, this allows to define  decomposition of order $\r$ for a subgroupoid of $\G$ which leads naturally to coarse decomposability and to finite  decomposition complexity with respect to a set of open subgroupoids of $\G$.
\subsection{Groupoids}
We assume that the reader is familiar with the basic definition concerning groupoids. For more details, we refer to \cite{pat,ren}.

\smallbreak

A groupoid with space of units $X$ consists of a set $\G$  provided with
\begin{itemize}
\item two maps $s:\G\to X$ and $r:\G\to X$  respectively called the source map and the range map;
\item a map $u:X\to\G;\; x\mapsto u_x$ which is a section both for $s$ and $r$;
\item an associative  composition $$\G\times_X\G\to \G:\;(\ga,\ga')\mapsto \ga\cdot\ga',$$ with 
$$\G\times_X\G=\{(\ga,\ga')\in\G\times\G\text{ such that }s(\ga)=r(\ga')\}$$  such that
$$s(\ga\cdot\ga')=s(\ga')$$ and $$r(\ga\cdot\ga')=r(\ga)$$  for any $(\ga,\ga')$ in 
$\G\times_X\G$  and $$\ga\cdot u_{s(\ga)}=u_{r(\ga)}\cdot \ga=\ga$$ for any $\ga$ in $\G$;
\item an inverse map $$\G\to\G;\, \ga\mapsto \ga^{-1}$$ such that $$s(\ga^{-1})=r(\ga),$$$$r(\ga^{-1})=s(\ga),$$ $$\ga\cdot \ga^{-1}=u_{r(\ga)},$$  and $$\ga^{-1}\cdot \ga=u_{s(\ga)}$$ for any $\ga$ in $\G$.
\end{itemize}
\begin{notation}
Let $\G$ be a groupoid with  space of units   $X$ and source and range maps $s,r:\G\to X$.
\begin{itemize}
\item Let $Z$ be a subset of $\G$.
\begin{itemize}
\item we set $Z^{-1}=\{\ga^{-1};\, \ga\in Z\}$;
\item for any $Y\subseteq X$, we set $Z_Y=s^{-1}(Y)\cap Z$ and $Z^Y=r^{-1}(Y)\cap Z$;
\item for any subsets $Y_1$ and $Y_2$ of $X$, we set $Z^{Y_2}_{Y_1}=Z^{Y_2}\cap Z_{Y_1}$;
\end{itemize}
\item Let $Z_1$ and $Z_2$ be subsets in $\G$, we set $$Z_1\cdot Z_2=\{\ga_1\ga_2;\,\ga_1\in Z_1,\,\ga_2\in Z_2\text{ and }s(\ga_1)=r(\ga_2)\}.$$
\end{itemize}
\end{notation}

A locally compact groupoid is a  groupoid provided with locally compact topology and such that the structure maps a continuous. In this paper, all the groupoids are assume to be locally compact and Hausdorff.  An open subgroupoid of $\G$ is a subgroupoid $\H$ of $\G$ which is open as a subset and such that the space of units is open in the space of unit
of $\G$. Notice that the latter condition always holds if the source map of $\G$ is open,
for instance if $\G$ is provided with a Haar  system \cite[Lemma 6.5]{tuhyp}.

\begin{definition}\label{definition-relatively-clopen}
Let $\G$ be locally compact groupoid. A relatively clopen subgroupoid of $\G$  is an open subgroupoid   $\H$ of $\G$ such that  if $Y$ stands for the unit space of $\H$, then $\H$ is closed in $\G_Y$.
\end{definition}
\begin{remark}\label{remark-relatively-clopen} Let $\G$ be locally compact groupoid and let  $\H$  be   a relatively clopen subgroupoid of $\G$  with unit space $Y$. Then
$\H$ is clopen in $\G^Y$ and in $\G_Y^Y$.
\end{remark}
Next lemma is  straightforward to prove.
\begin{lemma}\label{lemma-locally-clopen}Let $\G$ be  a locally compact groupoid  and let $\H$ be  an open subgroupoid  of $\G$ with  unit space $Y$. Then $\H$ is relatively  clopen if and only if   $K\cap\H$ is compact for any compact subset $K$ of $\G_Y$.
\end{lemma}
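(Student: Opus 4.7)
The plan is to unpack the definition carefully: ``relatively clopen'' means $\H$ is open in $\G$ and closed in $\G_Y$, so the lemma is really a characterization of closedness of $\H$ in $\G_Y$ by a compactness condition. The key preliminary observation is that, because $\H$ is an open subgroupoid, its unit space $Y$ is open in the unit space $X$ of $\G$; since $s:\G\to X$ is continuous, $\G_Y=s^{-1}(Y)$ is open in $\G$, and hence locally compact Hausdorff. This is what allows the standard ``compact slicing'' argument to go through.

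For the forward implication, I would assume $\H$ is closed in $\G_Y$. For any compact $K\subseteq\G_Y$, the set $K$ is closed in $\G_Y$ by Hausdorffness, hence $K\cap\H$ is closed in $K$; being a closed subset of a compact set, $K\cap\H$ is compact.

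For the converse, I would show that $\G_Y\setminus\H$ is open in $\G_Y$. Given $\ga\in\G_Y\setminus\H$, local compactness of $\G_Y$ yields a compact neighborhood $K$ of $\ga$ in $\G_Y$; let $V$ be its interior, so that $\ga\in V\subseteq K$. By hypothesis $K\cap\H$ is compact, hence closed in the Hausdorff space $\G_Y$. Since $\ga\notin K\cap\H$, the set $V\setminus(K\cap\H)$ is an open neighborhood of $\ga$. The inclusion $V\subseteq K$ gives $V\cap\H=V\cap(K\cap\H)$, so $V\setminus(K\cap\H)$ is disjoint from $\H$. This shows $\ga$ is an interior point of $\G_Y\setminus\H$, completing the proof.

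There is no serious obstacle here; the argument is the textbook characterization of closed subsets of locally compact Hausdorff spaces via compact intersections. The only point where care is needed is verifying that $\G_Y$ is itself locally compact Hausdorff, which reduces to the ``unit space open'' requirement in the definition of an open subgroupoid, together with continuity of the source map.
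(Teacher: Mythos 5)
Your proof is correct and is precisely the ``straightforward'' argument the paper intends: the lemma is stated there without proof, and your reduction to the standard characterization of closed subsets of the locally compact Hausdorff space $\G_Y$ (open in $\G$ since $Y$ is open and $s$ is continuous) via compact intersections, including the careful check of local compactness of $\G_Y$, is exactly the expected route.
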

We recall that a locally compact groupoid with space of units $X$ is {\bf proper} if the map
$$\G\to X\times X;\, \ga \mapsto (r(\ga),s(\ga))$$ is proper.
As a consequence of Lemma \ref{lemma-locally-clopen}, we obtain te following corollary.
\begin{corollary}\label{corollary-subgroupoid-proper}
 Let $\G$ be  a locally compact proper groupoid. Then  relatively clopen subgroupoids of $\G$ are proper.
 \end{corollary}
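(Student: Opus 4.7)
The plan is to unfold the definition of properness for $\H$ and reduce it directly to properness of $\G$ via Lemma \ref{lemma-locally-clopen}. Let $\H$ be a relatively clopen subgroupoid of $\G$ with unit space $Y$, and denote by $a_\H:\H\to Y\times Y,\ \ga\mapsto (r(\ga),s(\ga))$ and $a_\G:\G\to X\times X$ the respective anchor maps; I must show that $a_\H^{-1}(L)$ is compact for every compact $L\subseteq Y\times Y$.

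First I would fix such an $L$ and observe that, since $Y$ is open (hence a topological subspace) of $X$, the set $L$ is automatically compact in $X\times X$. Applying properness of $\G$, the preimage $K\defi a_\G^{-1}(L)\subseteq \G$ is then compact. The key observation at this point is that $L\subseteq Y\times Y$ forces $s(\ga)\in Y$ (and $r(\ga)\in Y$) for every $\ga\in K$, so $K\subseteq \G_Y$ (in fact $K\subseteq \G^Y_Y$).

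Next I would invoke Lemma \ref{lemma-locally-clopen}: since $\H$ is relatively clopen and $K$ is a compact subset of $\G_Y$, the intersection $K\cap \H$ is compact in $\G$, hence compact in $\H$ as well (because $\H$ is a subspace). Finally, I would identify $K\cap\H$ with $a_\H^{-1}(L)$: the inclusion $K\cap\H\subseteq a_\H^{-1}(L)$ is immediate, and conversely any $\ga\in\H$ with $(r(\ga),s(\ga))\in L$ automatically satisfies $(r(\ga),s(\ga))\in L\subseteq Y\times Y\subseteq X\times X$, placing it in $K$. This yields the required compactness of $a_\H^{-1}(L)$.

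The argument is essentially bookkeeping; the only point requiring a moment of care is the observation that $L$ sits inside $X\times X$ as a compact subset (so that properness of $\G$ applies) and simultaneously forces $K$ into $\G_Y$ (so that the relatively clopen hypothesis via Lemma \ref{lemma-locally-clopen} applies). I do not expect any serious obstacle beyond this verification; the corollary is really a transparent reformulation of Lemma \ref{lemma-locally-clopen} in the presence of properness of the ambient groupoid.
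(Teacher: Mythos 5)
Your argument is correct and is exactly the route the paper intends: the corollary is stated as an immediate consequence of Lemma \ref{lemma-locally-clopen}, and your proof simply spells out that deduction (pull back a compact $L\subseteq Y\times Y$ under the anchor map of $\G$, note the preimage lands in $\G_Y$, and intersect with $\H$ using the lemma). The only cosmetic remark is that the openness of $Y$ in $X$ is not needed to see that $L$ is compact in $X\times X$, since compactness is intrinsic to the subspace.
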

\subsection{Groupoid actions}
Let us recall first the definition of a (left) action of a groupoid.
Let $\G$ be a groupoid with space of units $X$ and source and range maps $s$ and $r$.
An action of the groupoid $\G$ on a set $Z$ consist of a map
$p:Z\to X$ called the {\bf anchor map} and a map
$$\G\times_X Z\to Z;\, (\ga,z)\mapsto \ga\cdot z,$$ with 
$$\G\times_X Z=\{(\ga,z)\in\G\times Z\text{ such that }s(\ga)=p(z)\}$$ such that
\begin{enumerate}
\item for any $\ga$ and $\ga'$ in $\G$ and $z$ in $Z$ such that  $(\ga,\ga')$ is  in $\G\times_X\G$ and $(\ga',z)$ is in $\G\times_X Z$, then
$(\ga,\ga'\cdot z)$ belongs to $\G\times_X Z$ and $\ga\cdot (\ga'\cdot z)=(\ga\cdot \ga')\cdot z$;
\item $u_{p(z)}\cdot z=z$ for any $z$ in $Z$.
\end{enumerate}
Notice that these conditions implies that
$p(\ga\cdot z)=r(\ga)$  and $\ga^{-1}\cdot\ga\cdot z=z$ for any $(\ga,z)$ in
$\G\times_X Z$.  If $x$ is an element in $X$, then the fiber of $Z$ at $x$ is $Z_x\defi f^{-1}(\{x\})$. If $\G$ is a locally compact groupoid and if $Z$ is a locally compact space,
we require the anchor map and the action map to be continuous. In this case, $Z$ will be called a $\G$-space. In what follows, all   $\G$-spaces are supposed to be Hausdorff.
If $Z$ and $Z'$ are $\G$-space with anchor maps $p_Z$ and $p_{Z'}$, a map $f:Z\to Z'$ is called a $\G$-map if $f$ is continuous, $p_{Z'}\circ f=p_Z$  and $f(\ga\cdot z)=\ga\cdot f(z)$ for all $(\ga,z)$ in $\G\times_X Z$. 

Let $\G$ be a groupoid  with space of units $X$ acting on a set $Z$ with anchor map
$p:Z\to X$. Then the action groupoid corresponding to the action of $\G$ on $Z$  denoted by $\G\lt Z$ is the set $\G\times_X Z$, with $Z$ as space of units with source map
$$\G\lt Z\to Z;\, (\ga,z)\mapsto z$$ and range map
$$\G\lt Z\to Z;\, (\ga,z)\mapsto \ga\cdot z,$$ unit map
$$Z\to\G\lt Z;\, z\mapsto (u_{p(z)},z),$$ composition
$$(\G\lt Z)\times_Z(\G\lt Z);\, (\ga,\ga' z)\cdot (\ga',z)\mapsto (\ga\ga',z)$$
and inverse
$$\G\lt Z\to\G\lt Z;\, (\ga,z)\mapsto(\ga^{-1},\ga\cdot z).$$
If $\G$ is a locally compact groupoid and $Z$ is a $\G$-space, then
$\G\lt Z$ is a locally compact groupoid.
A $\G$-space $Z$ is called proper (or the action of $\G$  on $Z$ is said to be proper) if the action groupoid $\G\lt Z$ is proper.
\begin{remark}\label{remark-cross-product-action}
Let $\G$ be a locally compact groupoid with  space of units  $X$ acting on a locally compact space $Y$. 
\begin{enumerate}
\item a $G\lt Y$-space is precisely a $\G$-space $Z$ together with a $\G$-map  $f:Z\to Y$;
\item in this case, $$\G\lt Z\lto (\G\lt Y)\lt Z;(\ga,z)\mapsto (\ga,f(z),z)$$ is a groupoid isomorphism;
\item in consequence, a $\G\lt Y$-space $Z$ is  proper if and only if it  is  
proper as a $\G$-space;
\item in particular, if $Z$ is a proper $\G$-space, then $Z\times_X Y$ is a proper 
$\G\lt Y$-space  with anchor map 
     given by the projection on the second factor (here $Z\times_X Y$ stands for the fiber product over the two anchor maps).

\end{enumerate}
\end{remark}

\begin{remark}\label{remark-relatively-clopen-action} Let $\G$ be locally compact groupoid and let  $\H$  be   a relatively clopen subgroupoid of $\G$  with unit space $Y$.
For any left $\G$-space $Z$, then $\H\rt Z_Y$ is relatively clopen in $\G\rt Z$.
\end{remark}
\subsection{Induced actions}\label{subsec-induced-actions}

We recall now from \cite{bonicke} the notion of induced action to a groupoid  from a subgroupoid action.
Let $\G$ be a locally compact groupoid with  space of units  $X$ and open source and range maps, let $\H$ be a relatively
clopen subgroupoid of $\G$ with space of units $Y$ and let $Z$ be a (left) $\H$-space with anchor map $p:Z\to Y$. Let us define
on $$\G\times_YZ\defi \{(\gamma,z)\in \G\times Z\text{ such that }  s(\ga)=p(z)\}$$ the $\H$-action with anchor map
$$\G\times_YZ\to Y:(\gamma,z)\mapsto p(z)$$ by $$\ga\cdot (\ga',z)=(\ga'\ga^{-1},\ga z)$$ for any $\ga$ in $\H$ and 
$(\ga',z)$ in $\G\times_YZ$ such that $s(\gamma)=p(z)$. The $\H$-action defined in this way is proper and the quotient space $$\G\times_\H Z\defi (\G\times_YZ)/\H$$ is Hausdorff and  locally compact. Let us denote by $[\ga,z]$ the class in $\G\times_\H Z$ of  an element $(\ga,z)$ in 
$\G\times_\H Z$. Then  $\G\times_\H Z$ is provided with a $\G$-action with anchor map $$\G\times_\H Z\to X:[\gamma,z]\mapsto r(\ga)$$ defined by
$$\ga\cdot[\ga',z]=[\ga\ga',z]$$ for any $\ga$ in $\G$ and $[\ga',z]$ in $\G\times_\H Z$ such that $s(\ga)=r(\ga')$ and is  called the $\G$-space induced by the $\H$-space $Z$. 
\begin{proposition}\cite{bonicke}
 Let $\G$ be a locally compact groupoid with open source and range maps, let $\H$ be a relatively
clopen subgroupoid of $\G$  and let $Z$ be a proper  $\H$-space. Then the induded $\G$-space  
$\G\times_\H Z$ is proper.
\end{proposition}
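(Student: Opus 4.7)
The plan is to verify properness of the $\G$-action on $W \defi \G \times_\H Z$ directly, by showing that the action map $\mu \colon \G \ltimes W \to W \times W$, $(\ga, [\ga', z]) \mapsto ([\ga \ga', z], [\ga', z])$, sends preimages of compact sets to compact sets. Since any compact subset of $W \times W$ sits inside a product $K_1 \times K_2$ of its projections, it suffices to prove $\mu^{-1}(K_1 \times K_2)$ is compact for arbitrary compact $K_1, K_2 \subset W$.

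The first step is to lift compactness through the orbit map $q \colon \G \times_Y Z \to W$. Openness of the source and range maps of $\G$ passes to $\H$ and makes $q$ a continuous open surjection between locally compact Hausdorff spaces, so each $K_i$ admits a compact lift $\tilde K_i \subset \G \times_Y Z$ with $q(\tilde K_i) = K_i$. Let $L_i \subset Z$ denote the image of $\tilde K_i$ under the second projection; each $L_i$ is compact.

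Next, I would unpack a typical element $(\ga, [\ga', z]) \in \mu^{-1}(K_1 \times K_2)$. Choosing representatives $(\ga_i, z_i) \in \tilde K_i$ for $[\ga \ga', z]$ and $[\ga', z]$, the definition of the $\H$-action supplies $k, h \in \H$ with $\ga_1 = \ga \ga' k^{-1}$, $z_1 = k z$, $\ga_2 = \ga' h^{-1}$, $z_2 = h z$. Setting $\ell \defi k h^{-1}$ yields $\ell z_2 = z_1$ together with $\ga = \ga_1 \ell \ga_2^{-1}$ and $[\ga', z] = [\ga_2, z_2]$. Properness of the $\H$-action on $Z$ ensures that
\[
M \defi \{(\ell, z) \in \H \ltimes Z : z \in L_2,\ \ell z \in L_1\}
\]
is compact.

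To conclude I would form the closed subspace
\[
E \defi \{((\ga_1, z_1), (\ga_2, z_2), \ell) \in \tilde K_1 \times \tilde K_2 \times \H : (\ell, z_2) \in M,\ \ell z_2 = z_1\}
\]
of the compact space $\tilde K_1 \times \tilde K_2 \times M$, and the continuous map $\Phi \colon E \to \G \ltimes W$ sending $((\ga_1, z_1), (\ga_2, z_2), \ell)$ to $(\ga_1 \ell \ga_2^{-1}, [\ga_2, z_2])$. The previous step exhibits $\mu^{-1}(K_1 \times K_2)$ as a subset of $\Phi(E)$, while the identity $[\ga_1 \ell, z_2] = [\ga_1, \ell z_2] = [\ga_1, z_1]$ places $\Phi(E)$ back into $\mu^{-1}(K_1 \times K_2)$; thus the two sets agree and compactness follows from continuity of $\Phi$ together with Hausdorffness of $\G \ltimes W$. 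The main obstacle is securing the compact lift for $q$, for which the openness of the orbit map is essential; the remainder is bookkeeping with the $\H$-action plus a single use of the properness of $Z$ as an $\H$-space.
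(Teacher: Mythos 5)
Your argument is correct, and in fact there is nothing in the paper to compare it with: the proposition is quoted from the reference [B\"onicke] and the paper gives no proof of it, so your direct point-set verification stands on its own. The three ingredients you use are all legitimate here: (a) the orbit map $q\colon \G\times_Y Z\to \G\times_\H Z$ is open because the source map of $\H$ is open (it is the restriction of the open source map of $\G$ to an open subgroupoid with open unit space), and openness of $q$ together with local compactness gives the compact lifts $\tilde K_i$; (b) the representative bookkeeping $\ga=\ga_1\ell\ga_2^{-1}$, $\ell z_2=z_1$ with $\ell=kh^{-1}\in\H$ is exactly right, including the composability checks; (c) properness of the $\H$-action enters only once, through compactness of $M$, and the set $E$ is indeed compact and $\Phi(E)=\mu^{-1}(K_1\times K_2)$ by the two inclusions you describe. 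Two small remarks: Hausdorffness of $\G\ltimes(\G\times_\H Z)$ is not what you need at the end --- the continuous image of a compact set is compact regardless --- whereas Hausdorffness of $\G\times_\H Z$ \emph{is} used, both in reducing to products $K_1\times K_2$ and in seeing that the defining conditions of $E$ are closed; this Hausdorffness (and local compactness) of the quotient comes from the properness of the $\H$-action on $\G\times_Y Z$, which is where the hypothesis that $\H$ is relatively clopen (not merely open) is really consumed, so it deserves an explicit mention rather than being absorbed silently into the construction of the induced space.
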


\subsection{$\G$-orders}

\begin{definition}\label{definition-G-order}
Let $\G$ be locally compact groupoid with space of unit $X$. A $\G$-order is a subset $\r$  of $\G$ such that
\begin{itemize}
\item $u(s(\r))\subseteq \r$;
\item $\r^{-1}=\r$ ($\r$ is symmetric).
\item  for every compact subset $Y$ of $X$, then $\r_Y$ is compact.
\end{itemize}
\end{definition}
\begin{remark}\label{remark-G-order}
Let $\G$ be locally compact groupoid with unit space.
\begin{enumerate}
\item for any compact subset $K$ of $\G$, then $K\cup K^{-1}\cup r(K)\cup s(K)$ is a compact $\G$-order and hence  for any compact subset $K$ of $\G$, there exists a compact $\G$-order $\r$ such that $K\subseteq\r$.
\item if $\r_1$ and $\r_2$ are $\G$-orders, then $\r_1\cup\r_2$ and $\r_1\cap\r_2$ are $\G$-orders.
\end{enumerate}
\end{remark}
\begin{lemma}Let $\G$ be locally compact groupoid, then any $\G$-order is closed.
\end{lemma}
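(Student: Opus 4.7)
The plan is to verify that $\r$ is closed by showing it contains every limit point, exploiting the local compactness of $\G$ together with the third axiom of a $\G$-order (the compactness of $\r_Y$ for $Y \subseteq X$ compact).

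More precisely, let $\ga \in \G$ be a limit point of $\r$, and let $(\ga_i)$ be a net in $\r$ converging to $\ga$. Since $\G$ is locally compact Hausdorff, I can pick a compact neighborhood $V$ of $\ga$ in $\G$. Setting $Y = s(V)$, this is a compact subset of $X$ because the source map is continuous and $V$ is compact. Eventually the net $(\ga_i)$ lies in $V$, and then $s(\ga_i) \in Y$, so eventually
\[
\ga_i \in s^{-1}(Y) \cap \r = \r_Y.
\]
By the third clause of Definition \ref{definition-G-order}, $\r_Y$ is compact; since $\G$ is Hausdorff, $\r_Y$ is closed in $\G$. The tail of $(\ga_i)$ lives in $\r_Y$ and converges to $\ga$, whence $\ga \in \r_Y \subseteq \r$. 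Thus $\r$ contains all its limit points and is therefore closed.

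The argument is entirely routine; the only mildly delicate point is that the compactness condition in Definition \ref{definition-G-order} is phrased fiberwise over compact subsets of the \emph{unit space}, not directly over compact subsets of $\G$. The compact neighborhood $V$ together with continuity of $s$ is exactly what bridges this gap, by forcing the tail of the net into a single fiberwise-compact piece $\r_Y$ of $\r$.
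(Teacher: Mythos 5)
Your proof is correct and rests on the same key point as the paper's: applying the third axiom of Definition \ref{definition-G-order} to the compact set $Y=s(K)$ for a suitable compact $K\subseteq\G$. The only difference is in the packaging: the paper shows $K\cap\r=K\cap\r_Y$ is compact for \emph{every} compact $K$ and concludes closedness from that (implicitly using that a locally compact Hausdorff space is compactly generated), whereas you run a direct net argument with a compact neighborhood, which makes the use of local compactness explicit and is, if anything, slightly more self-contained.
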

\begin{proof}
Let $\r$ be a $\G$-order. Let us prove that $\r\cap K$ is compact for any compact subset $K$ of $\G$.  Let us set $Y=s(K)$. Since $Y$ is compact, then $\r_Y$ is compact and hence  $K\cap \r=K\cap\r_Y$ is compact. 
\end{proof}

If $\r_1$ and $\r_2$ are two $\G$-orders, then $$\r_1\ast\r_2\defi (\r_1\cdot \r_2)\cup(\r_2\cdot \r_1)$$ is a $\G$-order. If $\r$ is a $\G$-order and
$n$ an integer, then $\r^{\ast n}$ stands for $\r\ast\cdots\ast\r$ ($n$ products). Notice that according to the first point of Definition \ref{definition-G-order}, we have that $\r\subseteq \r^{\ast n}$ for every integer $n$.
Let $\E_\G$ be the set of $\G$-orders. Then $\E_\G$ is a poset for the inclusion and ordered semi-group for $\ast$. Moreover  $\E_\G$ is a lattice with the infimium given by the intersection and the supremium given by the union. We denote by $\E_{\G,c}$ the set of compact $\G$-order. Then $\E_{\G,c}$ is  as well an ordered semi-group for $\ast$ and a lattice fo the partial order given by the inclusion.

\subsection{$\r$-decomposition of a groupoid}

\begin{remark}
Let $\G$ be  a locally compact groupoid and let $\H$ be  a relatively  clopen subgroupoid of $\G$.
\begin{enumerate}
\item let  $\r$ be a $\G$-order, then $\r\cap \H$ is a $\H$-order denoted by $\r_{/\H}$.
\item $\E_\G\to\E_\H:\r\mapsto \r_{/\H}$ is a map of posets such that  $${\r_1}_{/\H}\ast{\r_2}_{/\H}\subseteq(\r_1\ast\r_2)_{/\H}$$ for any $\G$-orders $\r_1$ and $\r_2$.
\end{enumerate}
\end{remark}
 
\begin{definition}\label{definition-remark}
Let $\G$ be a locally compact groupoid,  let $\H$ be a  subgroupoid of $\G$ with  space of units $Y$ and let $\r$ be a $\G$-order.
\begin{enumerate}
\item an $\r$-decomposition of $\H$ is a quadruple $(V_1,V_2,\H_1,\H_2)$ where 
\begin{itemize}
\item  $V_1$ and $V_2$ are open subsets of $Y$ with  $Y=V_1\cup V_2$ and such that there exists a partition of the unit subordinated to $(V_1,V_2)$;
\item $\H_1$ and $\H_2$ are subgroupoids of $\H$ which are open in $\G$.
\item $\r_{V_i}\cap\H$ is contained in $\H_i$ for $i=1,2$.
\end{itemize}
\item a coercive $\r$-decomposition of $\H$ is a $\r$-decomposition $(V_1,V_2,\H_1,\H_2)$ of $\H$ such that
$\H_1$ and $\H_2$ are relatively clopen in $\G$.
\end{enumerate}
\end{definition}
Following the route   of \cite{gwy2},  we introduce the notion of decomposability with respect to  a set of open subgroupoids.
\begin{definition}
Let $\G$ be locally compact groupoid and let $\D$ be a set of open subgroupoids of $\G$. A subgroupoid 
$\H$ of $\G$ is $\D$-decomposable if for every $\G$-order $\r$, there exists an $\r$-decomposition
$(V_1,V_2,\H_1,\H_2)$ with $\H_1$ and $\H_2$ in $\D$.
\end{definition}
\begin{remark}If the  space of units of $\G$ is second countable, then the existence of the partition of the unit in the first item of Definition \ref{definition-remark} is garanteed.\end{remark}
\begin{lemma}\label{lemma-open-decomposable}
Let $\G$ be locally compact groupoid and let $\H$ be a subgroupoid of $\G$.
\begin{enumerate}
\item if $\D$ is a  set of open subgroupoids of $\G$ such that $\H$ is  
$\D$-decomposable then $\H$ is an open subgroupoid of $\G$.
\item  if $\D$ is a  set of relatively clopen  subgroupoids of $\G$ such that $\H$ is  
$\D$-decomposable,  then $\H$ is a relatively clopen  subgroupoid of $\G$. 
\end{enumerate}
\end{lemma}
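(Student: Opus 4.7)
For (i), to show $\H$ is open (from which openness of the unit space follows automatically), I would produce, for each $\ga\in\H$, an open neighbourhood of $\ga$ in $\G$ contained in $\H$. By Remark \ref{remark-G-order} there is a compact $\G$-order $\r$ with $\ga\in\r$. Applying $\D$-decomposability of $\H$ to this $\r$ yields a quadruple $(V_1,V_2,\H_1,\H_2)$ with $\H_1,\H_2\in\D$ (hence open in $\G$) and with the $\H_i$ subgroupoids of $\H$. Writing $Y$ for the unit space of $\H$, the equality $Y=V_1\cup V_2$ forces $s(\ga)\in V_i$ for some $i$; together with $\ga\in\r\cap\H$ this gives $\ga\in \r_{V_i}\cap\H\subseteq \H_i$. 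Since $\H_i$ is open in $\G$ and contained in $\H$, this provides the required neighbourhood.

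For (ii), part (i) already forces $\H$ to be open, so by Lemma \ref{lemma-locally-clopen} it suffices to show that $K\cap\H$ is compact for every compact $K\subseteq\G_Y$. Fix such a $K$ and use Remark \ref{remark-G-order} to pick a compact $\G$-order $\r$ containing the compact set $K\cup u(s(K))$. Applying $\D$-decomposability produces a quadruple $(V_1,V_2,\H_1,\H_2)$ in which the $\H_i$ are now relatively clopen in $\G$, with unit spaces $Y_i$. Take a partition of unity $\chi_1,\chi_2$ subordinate to $(V_1,V_2)$ and set $C_i=\chi_i^{-1}([1/2,1])$; these are closed subsets of $Y$ with $C_i\subseteq V_i$ and $C_1\cup C_2=Y$. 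The crucial inclusion is $s(K)\cap V_i\subseteq Y_i$: if $y\in s(K)\cap V_i$, then $u_y\in\r$ by the choice of $\r$ and $u_y\in\H$, so $u_y\in \r_{V_i}\cap\H\subseteq\H_i$ and hence $y\in Y_i$.

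The remainder is routine. Set $K_i:=K\cap s^{-1}(C_i)$; each $K_i$ is compact as a closed subset of $K$ and, by the previous inclusion, is contained in $\G_{Y_i}$. Applying Lemma \ref{lemma-locally-clopen} to the relatively clopen subgroupoid $\H_i$ then makes $K_i\cap\H_i$ compact. Any $\ga\in K_i\cap\H$ satisfies $s(\ga)\in C_i\subseteq V_i$ and lies in $\r\cap\H$, so it actually belongs to $\H_i$; hence $K_i\cap\H=K_i\cap\H_i$ is compact, and $K\cap\H=(K_1\cap\H_1)\cup(K_2\cap\H_2)$ is compact as required. The one step requiring care is exactly the inclusion $s(K)\cap V_i\subseteq Y_i$, and this is what motivates enlarging $\r$ so as to contain $u(s(K))$: it is the mechanism by which information about the cover $(V_1,V_2)$ of $Y$ is transferred to information about the unit spaces $Y_i$ of the pieces $\H_i$.
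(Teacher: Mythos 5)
Your proposal is correct and follows essentially the same route as the paper's proof: pick a compact $\G$-order containing the relevant compact set, apply the $\r$-decomposition, use the partition of unity to produce a closed cover $C_1\cup C_2=Y$, split $K$ accordingly, and invoke Lemma \ref{lemma-locally-clopen} for the pieces $\H_i$. The only difference is that you make explicit the inclusion $s(K_i)\subseteq Y_i$ needed to apply that lemma (by enlarging $\r$ to contain $u(s(K))$, which is in fact automatic from the axiom $u(s(\r))\subseteq\r$ once $K\subseteq\r$), a point the paper leaves implicit.
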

\begin{proof}

Let us prove the first point.
 Let $\ga$ be an element in $\H$. According to point (i) of Remark \ref{remark-G-order}, there exists a $\G$-order $\r$ such that $\ga$ lies in $\r$. Let $(V_1,V_2,\H_1,\H_2)$ be  a $\r$-decomposition of $\H$  with $\H_1$ and $\H_2$ in $\D$. By definition of an $\r$-decomposition, we see that  
$\ga$ belongs to $\H_1\cup\H_2$ which is an open subset of  $\G$  contained in $\H$.

\smallskip

 For the second point, assume now that every subgroupoid in $\D$ is  relatively clopen and let $\H$ be a $\D$-decomposable subgroupoid of $\G$.
Let us prove that   $\H$ is relatively  clopen. Let $Y$ be the unit space of $\H$. According to Lemma \ref{lemma-locally-clopen}, this amounts to prove that $\H\cap K$ is compact if $K$ is a  compact subset  of $\G_Y$.  Consider then  a $\G$-order $\r$ such that $K\subseteq \r$ (see point (i) of Remark \ref{remark-G-order}) and let $(V_1,V_2,\H_1,\H_2)$ be a $\r$-decomposition for $\H$. The existence  of a partition of the unit subordinated to $(V_1,V_2)$ ensures that there exists two closed subsets $F_1$ and $F_2$ of $Y$, respectively contained in $V_1$ and $V_2$ and such that $Y=F_1\cup F_2$. Let us set $K_1= K\cap \G_{F_1}$ and $K_2=K\cap \G_{F_2}$. Then $K_1$ and $K_2$ are  compact subsets respectively contained in $\G_{V_1}$ and $\G_{V_2}$ and moreover, we have $K=K_1\cup K_2$. 
Furthermore, since $K_1\subseteq \r_{V_1}$ and $K_2\subseteq \r_{V_2}$ and using the  definition of a $\r$-decomposition, we have $\H\cap K_1=\H_1\cap K_1$ and  $\H\cap K_2=\H_1\cap K_2$. Since $\H_1$ and $\H_2$ are relatively clopen subgroupoids, then $\H_1\cap K_1$  and 
$\H_2\cap K_2$ are compact and hence $\H\cap K$ is compact.

\end{proof} 

Let $\G$ be locally compact groupoid.  A set $\D$ of open subgroupoids of $\G$ is closed under coarse decompositions if every $\D$-decomposable  subgroupoid  of $\G$ 
  is indeed in $\D$. 
  If $\D$ is a set of open subgroupoid of $\G$, let $\widehat{\D}$ be the smallest set of open subgroupoids of $\G$ closed under coarse decompositions. 
  
  \begin{definition}\label{definition-finite-decomposition}
  Let $\G$ be a locally compact groupoid and let $\D$ be a family of open subgroupoids of $\G$.
  An open subgroupoid $\H$ of $\G$ has  finite decomposition complexity with respect to $\D$  ($\D$-fdc) if $\H$ belongs to $\widehat{\D}$.
  \end{definition}
  
  \begin{lemma}\label{lemma-stable-open-subgroupoid}
  Let $\G$ be a locally compact groupoid and let  $\D$ be a set of open subgroupoids of $\G$ closed under taking open subgroupoids.  Then  $\widehat{\D}$ is closed under taking subgroupoids.
 \end{lemma}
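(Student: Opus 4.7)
The plan is to introduce the auxiliary subset
$$\D'\defi\{\H\in\widehat\D\,:\,\text{every open subgroupoid of }\G\text{ contained in }\H\text{ belongs to }\widehat\D\}$$
and to show $\D'=\widehat\D$ by invoking the minimality of $\widehat\D$. For this it suffices to verify that $\D\subseteq\D'$ and that $\D'$ is itself closed under coarse decompositions; the conclusion of the lemma is then immediate, since every open subgroupoid of an element of $\widehat\D=\D'$ lies in $\widehat\D$.

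The inclusion $\D\subseteq\D'$ is direct from the hypothesis that $\D$ is closed under taking open subgroupoids, combined with $\D\subseteq\widehat\D$. For the closure of $\D'$ under coarse decompositions, I would fix a $\D'$-decomposable open subgroupoid $\H$ of $\G$ and an arbitrary open subgroupoid $\H'$ of $\G$ contained in $\H$, whose unit space I denote $Y'$. Given any $\G$-order $\r$, pick an $\r$-decomposition $(V_1,V_2,\H_1,\H_2)$ of $\H$ with $\H_1,\H_2\in\D'$ and restrict it by setting $V_i'\defi V_i\cap Y'$ and $\H_i'\defi\H_i\cap\H'$ for $i=1,2$.

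A routine check then shows that $(V_1',V_2',\H_1',\H_2')$ is an $\r$-decomposition of $\H'$ in the sense of Definition \ref{definition-remark}: the $V_i'$ are open in $Y'$ and cover it, a partition of unity subordinated to $(V_1,V_2)$ restricts to one subordinated to $(V_1',V_2')$, each $\H_i'$ is an open subgroupoid of $\G$ (being the intersection of two such), and the inclusion $\r_{V_i'}\subseteq\r_{V_i}$ combined with the original containment $\r_{V_i}\cap\H\subseteq\H_i$ gives $\r_{V_i'}\cap\H'\subseteq\H_i'$. Since $\H_i'$ is in particular an open subgroupoid of $\G$ contained in $\H_i\in\D'$, the definition of $\D'$ forces $\H_i'\in\widehat\D$, so $\H'$ is $\widehat\D$-decomposable, and closure of $\widehat\D$ under coarse decompositions yields $\H'\in\widehat\D$. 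Applying this with $\H'=\H$ also shows $\H\in\widehat\D$, hence $\H\in\D'$.

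The one genuinely delicate point is the design of $\D'$: it is crucial to require only that open subgroupoids of an element of $\D'$ lie in $\widehat\D$ rather than in $\D'$ itself, because the restricted pieces $\H_i'$ are a priori only known to sit in $\widehat\D$; asking $\H_i'\in\D'$ would loop the definition and block the induction. Everything else — the restriction of the partition of unity and the order-containment $\r_{V_i'}\cap\H'\subseteq\H_i'$ — is elementary bookkeeping once one unpacks the definition of an $\r$-decomposition.
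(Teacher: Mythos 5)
Your proof is correct and follows essentially the same route as the paper: the same auxiliary set $\D'$ of subgroupoids all of whose open subgroupoids lie in $\widehat{\D}$, the same restricted decomposition $(V_1\cap Y',V_2\cap Y',\H_1\cap\H',\H_2\cap\H')$, and the same appeal to minimality of $\widehat{\D}$ to conclude $\D'=\widehat{\D}$. Your remark about only requiring the restricted pieces to land in $\widehat{\D}$ (rather than in $\D'$) is exactly the point the paper's argument also relies on, so there is nothing to add.
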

 \begin{proof}
 Let $\D'$ be the set of open subgroupoids $\H$ of  $\G$ such that every  open subgroupoid of $\H$ lies in $\widehat{\D}$. We have inclusions $\D\subseteq \D'\subseteq  \widehat{\D}$. Let us show that $\D'$ is closed under coarse decompositions. Let $\H$ be an open subgroupoid of $\G$  which is $\D'$-decomposable and let $\H'$ be a open subgroupoid of $\H$ with unit space $Y$. Let $\r$ be a $\G$-order and let us consider an $\r$-decomposition                     $(V_1,V_2,\H_1,\H_2)$    of $\H$ with $\H_1$ in $\H_2$ in $\D'$. Then $(V_1\cap Y,V_2\cap Y,\H_1\cap \H',\H_2\cap \H')$ is  an $\r$-decomposition of $\H'$ with $\H_1\cap \H'$ and $\H_2\cap \H'$ in $\D$. In consequence $\H'$ is in $\widehat{\D}$ for any open subgroupoid and hence $\H$ is in $\D'$. We conclude that $\widehat{\D}\subseteq\D'$ and hence  $\widehat{\D}=\D'$.
 \end{proof}
  \begin{lemma}\label{lemma-relatively-clopen}
  Let $\G$ be a locally compact groupoid and let  $\D$ be a set of relatively  clopen  subgroupoids of $\G$.
  \begin{enumerate}
  \item if $\H$ is in  $\widehat{\D}$, then $\H$ is relatively clopen;
  \item If $\D$ is closed under taking relatively  clopen subgroupoids,  then so is $\widehat{\D}$.
  \end{enumerate}
 \end{lemma}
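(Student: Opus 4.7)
The plan is to handle (i) and (ii) separately, in each case by exhibiting an intermediate family closed under coarse decompositions and then invoking the minimality of $\widehat{\D}$.

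For (i), I would take $\D_{rc}$ to be the family of all relatively clopen subgroupoids of $\G$. The hypothesis that $\D$ consists of relatively clopen subgroupoids gives $\D\subseteq\D_{rc}$, and Lemma \ref{lemma-open-decomposable}(ii) says precisely that a $\D_{rc}$-decomposable open subgroupoid is itself relatively clopen, so $\D_{rc}$ is closed under coarse decompositions. By minimality of $\widehat{\D}$ one obtains $\widehat{\D}\subseteq\D_{rc}$, which is exactly (i).

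For (ii), I would imitate the proof of Lemma \ref{lemma-stable-open-subgroupoid}. Set
\[
\D'=\{\H\text{ open subgroupoid of }\G:\text{every relatively clopen subgroupoid of }\H\text{ lies in }\widehat{\D}\}.
\]
Since any open subgroupoid is trivially a relatively clopen subgroupoid of itself, $\D'\subseteq\widehat{\D}$, and the assumption that $\D$ is closed under taking relatively clopen subgroupoids forces $\D\subseteq\D'$. It remains to show $\D'$ is closed under coarse decompositions, which is the heart of the argument. Let $\H$ be $\D'$-decomposable, let $\H'$ be a relatively clopen subgroupoid of $\H$ with unit space $Y'$, and let $\r$ be a $\G$-order. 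Picking an $\r$-decomposition $(V_1,V_2,\H_1,\H_2)$ of $\H$ with $\H_i\in\D'$, I would verify that $(V_1\cap Y',\,V_2\cap Y',\,\H_1\cap\H',\,\H_2\cap\H')$ is an $\r$-decomposition of $\H'$: openness, the covering of $Y'$, restriction of a partition of unity from $Y$ to $Y'$, and the inclusion $\r_{V_i\cap Y'}\cap\H'\subseteq\H_i\cap\H'$ all pass directly from $\H$ to $\H'$. The key point is to place each $\H_i\cap\H'$ in $\widehat{\D}$: using (i) one knows $\H_i$ is relatively clopen in $\G$, and combining this with the relative clopenness of $\H'$ in $\G$ via Lemma \ref{lemma-locally-clopen} one shows $\H_i\cap\H'$ is a relatively clopen subgroupoid of $\H_i$; since $\H_i\in\D'$, this puts $\H_i\cap\H'$ into $\widehat{\D}$. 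Hence $\H'$ is $\widehat{\D}$-decomposable, so $\H'\in\widehat{\D}$ by closure of $\widehat{\D}$ under coarse decompositions. This gives $\H\in\D'$, so by minimality $\widehat{\D}\subseteq\D'$, yielding (ii).

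The main technical subtlety I expect is the relative-clopenness transitivity needed to conclude that $\H_i\cap\H'$ is relatively clopen in $\H_i$: one must identify $(\H_i)_{Y'\cap Y_i}$ with $(\H_i)_{Y'}=\H_i\cap\G_{Y'}$ (where $Y_i$ is the unit space of $\H_i$) and deduce closedness in the latter from closedness of $\H'$ in $\G_{Y'}$. Once this bookkeeping is in place, both parts follow formally from the minimality of $\widehat{\D}$ and the already established properties of open and relatively clopen subgroupoids.
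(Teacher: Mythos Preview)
Your proposal is correct and follows essentially the same route as the paper: for (i) you invoke Lemma~\ref{lemma-open-decomposable}(ii) and minimality (the paper does the same, only intersecting with $\widehat{\D}$ first), and for (ii) you carry out exactly the argument the paper sketches by reducing to the fact that the intersection of two relatively clopen subgroupoids is relatively clopen. Your explicit treatment of the transitivity step (deducing that $\H_i\cap\H'$ is relatively clopen in $\H_i$ from relative clopenness in $\G$) fills in precisely the detail the paper leaves implicit.
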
  
 \begin{proof}
 To prove the first point, let us  consider the set $\D'$ of relatively clopen subgroupoids of $\G$ that belongs to $\widehat{\D}$. Then we have inclusions
 $\D\subseteq \D'\subseteq  \widehat{\D}$ and we deduce from  Lemma \ref{lemma-open-decomposable} that $\D'$ is closed under coarse decompositions. Hence we have $\D'=\widehat{\D}$
 
 \smallskip
 
 To prove the second point, we proceed as for the second point of  Lemma \ref{lemma-stable-open-subgroupoid} by considering
 the set of subgroupoids $\H$ of $\G$ for which every relatively clopen  subgroupoid  is in $\widehat{\D}$ and by noticing that the intersection of two relatively clopen subgroupoids is relatively clopen.
 \end{proof}
%
%
 \begin{example}
  Let $X$ be a metric discrete space with bounded geometry and with finite complexity decomposition in the sense of \cite{gty1} and consider then $\G_X$   the coarse groupoid of $X$ defined in \cite{sty}. Then $\G_X$ has finite decomposition  complexity with respect to the set of its compact open subgroupoids. In particular, if $\Ga$ is a finitely generated group with finite complexity decomposition and if we consider its action  on  its Stone-\v{C}ech compactification $\beta_\Ga$, then the action groupoid $\Ga\lt\beta_\Ga$ has finite decomposition  complexity with respect to the set of its compact open subgroupoids.
  \end{example}
  
%
%
%
%
\section{Reduced crossed product of a groupoid}
In this section, we review the construction of the reduced crossed-product for a groupoid action on a $C^*$-algebra. Some good material for this construction can be founded in \cite{legall-thesis,legall-paper}.
 \subsection{$C(X)$-algebra}
  \begin{definition}
  Let $X$ be a locally compact space. A $C(X)$-algebra  is a $C^*$-algebra $A$ together with a morphism
  $\Psi:C_0(X)\to\mathcal{Z}(\mathcal{M}(A))$, where $\mathcal{Z}(\mathcal{M}(A))$ stands for
  the center of the multiplier algebra of $A$, such that
  $$\{\Psi(f)\cdot a;\, f\in C_0(X)\text{ and }a\in A\}$$ is dense in $A$.
  \end{definition}
  From now on, for $f$ in $C_0(X)$ and $a$ in $A$, we will denote $\Psi(f)\cdot a$ by $f\cdot a$ and omit  the structure map $\Phi$.
  
  \medskip
  Let $A$ be an $C(X)$-algebra and let us consider 
  for $x$ in $X$   the ideal  $I_x$  defined as the closure of 
  $$\{f\cdot a;\, f\in C_0(X)\text{ and }a\in A\text{ such that }f(x)=0\}.$$
  We define the {\bf fiber} of $A$ at $x$ as the quotient $C^*$-algebra $A_x\defi A/I_x$.
  For $a$ in $A$, we denote by $a(x)$ the image of $a$ under the quotient map
  $A\to A_x$. Then we have the following classical result \cite{wil}
  \begin{lemma}
  Let $X$ be a locally compact space and let $A$ be a $C(X)$-algebra. Then for any $a$ in $A$,
  \begin{enumerate}
  \item  the map $X\to\R;\, x\mapsto \|a(x)\|$ is upper semi-continuous and vanishing at infinity;
  \item $\| a\|=\sup_{\in X}\|a(x)\|$.
  \end{enumerate}
  \end{lemma}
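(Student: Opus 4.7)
My plan is to derive both items from the quotient description of the fiber norm,
$$\|a(x_0)\| \;=\; \inf\bigl\{\|a - c\| : c \in I_{x_0}\bigr\},$$
together with the defining density: elements of the form $f \cdot b$ with $f \in C_0(X)$, $b \in A$ and $f(x_0) = 0$ span a dense subset of $I_{x_0}$.

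For the upper semi-continuity in (1), I would fix $x_0 \in X$ and $\epsilon > 0$, use the density above to pick a finite sum $c = \sum_{i=1}^n f_i \cdot b_i$ with each $f_i(x_0) = 0$ and $\|a - c\| < \|a(x_0)\| + \epsilon/2$, and then exploit the continuity of the $f_i$ to obtain a neighborhood $U$ of $x_0$ on which $\sum_i |f_i(x)|\,\|b_i\| < \epsilon/2$. Since $A \to A_x$ is a contractive $*$-homomorphism sending $f \cdot b$ to $f(x) b(x)$, on $U$ one gets
$$\|a(x)\| \;\leq\; \|a - c\| + \sum_i |f_i(x)|\,\|b_i\| \;<\; \|a(x_0)\| + \epsilon,$$
so $\{x : \|a(x)\| < \|a(x_0)\| + \epsilon\}$ is a neighborhood of $x_0$. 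For the vanishing at infinity, I would run the same argument globally: the density axiom of the $C(X)$-algebra produces $c = \sum_i f_i \cdot b_i$ with $\|a - c\| < \epsilon$, and since the $C_0(X)$-function $x \mapsto \sum_i |f_i(x)|\,\|b_i\|$ falls below $\epsilon$ outside a compact subset of $X$, one obtains $\|a(x)\| < 2\epsilon$ there.

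For item (2), the inequality $\|a\| \geq \sup_x \|a(x)\|$ is immediate from the contractivity of the fiber maps. For the converse, the idea is to assemble the fibers into a single $*$-homomorphism
$$\pi : A \longrightarrow \prod_{x \in X} A_x,\qquad a \mapsto (a(x))_{x \in X},$$
and show that $\pi$ is injective, hence isometric onto its image; part (1) will then guarantee that this image lies in the $C^*$-algebra of bounded families under the sup norm, producing $\|a\| = \sup_x \|a(x)\|$. Injectivity of $\pi$ amounts to the non-degeneracy identity $\bigcap_{x \in X} I_x = \{0\}$, a classical property of $C(X)$-algebras whose proof combines the density axiom with a partition-of-unity gluing argument on a compact exhaustion of $X$. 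This last step is the only genuine technical obstacle; everything else is a direct unwinding of the quotient norm formula and the triangle inequality.
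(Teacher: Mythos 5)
Your item (1) and the easy inequality $\|a\|\ge\sup_{x}\|a(x)\|$ are correct and essentially complete in outline, and your reduction of the reverse inequality to the injectivity of the fiberwise map $A\to\prod_{x}A_x$ is also sound, since an injective $*$-homomorphism of $C^*$-algebras is automatically isometric and the image consists of bounded families by contractivity of the quotient maps. (For calibration: the paper gives no proof at all here; it quotes the lemma as a classical result from Williams' book \cite{wil}.)

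The gap is exactly at the point you yourself flag as the ``only genuine technical obstacle'': the non-degeneracy $\bigcap_{x\in X}I_x=\{0\}$. You do not prove it, and the strategy you hint at --- the density axiom plus a partition-of-unity gluing over a compact exhaustion --- does not work as described. What density and a partition of unity honestly give is a local statement: for each $x$ there is a neighbourhood $U_x$ such that $\|\phi\cdot a\|<2\eps$ for every $\phi$ with $0\le\phi\le1$ supported in $U_x$. But recombining these local estimates over a finite subcover by the triangle inequality only yields $\|f\cdot a\|\le 2n\eps$ with $n$ depending on $\eps$, and the natural refinements (Cauchy--Schwarz with central square roots $\phi_j^{1/2}$, or the completely positive estimate $\|\sum_j\phi_j c_j\|\le\max_j\|c_j\|$ for central positive $\phi_j$ with $\sum_j\phi_j\le1$) either reproduce $\|f\cdot a\|$ on the right-hand side or require fiberwise control of norms, which is precisely the statement (2) being proved; already in the commutative case, smallness of each $\|\phi_j a\|$ for a partition of unity says nothing about $\|a\|$ (take $\phi_j\equiv 1/n$ and $a=1$). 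The missing idea --- and the one behind the proof in \cite{wil} --- is representation-theoretic: if $\sigma$ is an irreducible representation of $A$, its canonical extension to the multiplier algebra restricts on the central copy of $C_0(X)$ to a character, i.e.\ to evaluation at some point $x$, so that $I_x\subseteq\ker\sigma$ and $\sigma$ factors through $A_x$; then $\|a\|=\sup_{\sigma}\|\sigma(a)\|\le\sup_{x}\|a(x)\|$, which yields both the injectivity of your fiberwise map and the norm formula at once. Some input of this kind (pure states or irreducible representations, or the Dauns--Hofmann theorem) is unavoidable; density and partitions of unity alone do not close the argument.
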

  Let $X$ and $Y$ be locally compact  space, let $A$ be a $C(Y)$-algebra and let $f:X\to Y$ be a continuous map.
  The algebra $C_0(X,A)$ of continuous functions $\xi :X\to A$ vanishing at infinity is then a $C(X\times Y)$-algebra.
  Consider in $C_0(X,A)$ the ideal $I_f$ defined as the closure of
  $$\{h\cdot \xi;\, h\in C_0(X\times Y),\, \xi \in C_0(X,A)\text{ such }   h(x,f(x))=0\,\forall x\in X\}.$$
  The {\bf pull back algebra} of $A$ by $f$ is by definition $f^*A\defi C_0(X,A)/I_f$. Pointwise multiplication by $C_0(X)$ on $C_0(X,A)$ induces then a $C(X)$-algebra structure on $f^*A$. The fiber of $f^*A$ at an element $x$ of $X$ is canonically isomorphic to $A_{f(x)}$, this isomorphism being induced by the 
  map \begin{eqnarray*}C_0(X,A)&\to& A_{f(x)}\\
  \xi&\mapsto&\xi(x)(f(x)).
  \end{eqnarray*}
  Let $A$ and $B$ be two $C(X)$algebras. A morphism of $C^*$-algebra $\Psi:A\to B$ is called a morphism of $C(X)$-algebra if it is in addition $C_0(X)$-linear. It is straightforward to check that a morphism of $C(X)$-algebra $\Psi:A\to B$ induced for every $x$ in $X$ a morphism
  $\Psi_x:A_x\to B_x$. Moreover, $\Psi$ is an isomorphism (resp. injective, surjective) if  $\Psi_x$ is an isomorphism (resp. injective, surjective) for any $x$ in $X$.
 \subsection{Groupoid actions on $C^*$-algebras}
Groupoid actions generalize  to the setting of groupoid the notion of group actions by automorphisms on a $C^*$-algebra.
 \begin{definition}
 Le $\G$ be a locally compact groupoid  with $X$ as space of units and let $A$ be a $C(X)$-algebra. An action of $\G$ on $A$ is given by a $C(\G)$-isomorphism 
 $\alpha:s^*A\to r^*A$ which satisfies
 $$\al_{\ga\ga'}=\al_\ga\circ\al_\ga'$$ for any $\ga$ and $\ga'$ in $\G$ such that $s(\ga)=r(\ga')$, where $$\al_\ga:A_{s(\ga)}\to A_{r(\ga)}$$  is the morphism fiberwise induced by $\al$ at $\ga$ in $\G$ under the   canonical isomorphisms 
 $(s^*A)_\ga\cong A_{s(\ga)}$ and  $(r^*A)_\ga\cong A_{r(\ga)}$.
 A $C(X)$-algebra equipped with an action of $\G$ will be called a $\G$-algebra.
 \end{definition}
 In what follows,  for a $\G$-algebra $A$ with respect to an action
 $\al:s^*A\to r^*A$, we shall denote for short  the morphism induced fiberwise at $\ga$ in $\G$  by 
 $$\ga:A_{s(\ga)}\mapsto A_{r(\ga)}; a\mapsto \ga(a).$$
 \begin{example}\label{example-action-algebra}Let $\G$ be a locally compact groupoid with space of units $X$ and let 
 $Z$ be à $\G$-space with respect to the anchor map $p_Z:Z\to X$.
\begin{enumerate}
 \item  The  anchor map provides a $C(X)$-algebra structure on $C(Z)$ which is acted upon by $\G$ in the following way. Let us define
 $$s_*Z=\{(\ga,z)\in \G\times Z\text{ such that }  s(\ga)=p_Z(z)\}$$ and
 $$r_*Z=\{(\ga,z)\in \G\times Z\text{ such that } r(\ga)=p_Z(z)\}.$$ The we have
 canonical isomorphisms $C_0(s_*Z)\cong s^*(C_0(Z))$ and
 $C_0(r_*Z)\cong r^*(C_0(Z))$ and under these identifications, the homeomorphism
 \begin{eqnarray*}
 r_*Z&\to&s_*Z\\
 (\ga,z)&\mapsto&(\ga,\ga^{-1}z)
 \end{eqnarray*} gives rise to an $C(\G)$-isomorphism
 $$\al:s^*(C_0(Z))\stackrel{\cong}{\lto}r^*(C_0(Z)).$$ Let $\ga$ be an element in $\G$. The fibers at $\ga$ of $s^*(C_0(Z))$ and  $r^*(C_0(Z))$ are  under the above identifications respectively $C_0(Z_{s(\ga)})$ and $C_0(Z_{r(\ga)})$ and $\al$ induces fiberwise at $\ga$   the isomorphism 
 \begin{eqnarray*}
 C_0(Z_{s(\ga)})&\to&C_0(Z_{r(\ga)})\\
 f&\mapsto&\ga(f),
 \end{eqnarray*}
 where $\ga(f)(z)=f(\ga^{-1}\cdot z)$ for any $z$ in $Z_{r(\ga)}$ and any $f$ in
 $C_0(Z_{s(\ga)})$.
 \item If $A$ is a $C(Z)$-algebra, then an action of $\G\lt Z$ on $A$ is simply    an action $\al:s^*A\to r^*A$ of $\G$ on $A$ which is $C(Z)$-linear, where $A$ is viewed as a $C(X)$-algebra  by using the anchor map.
  \end{enumerate}
 \end{example}
 Let $\G$ be a locally compact groupoid with space of unit $X$ and let $A$ and $B$ be $\G$-algebras.
 A $\G$-morphism is a $C(X)$-morphism $f:A\to B$ such that $$\ga\circ f_{s(\ga)}=f_{r(\ga)}\circ \ga$$ for every $\ga$ in $\G$.

   \subsection{Reduced crossed products}
 
 Let $\G$ be a locally compact groupoid with space of units $X$ and let $C_c(\G)$ be the set of complex  valued and compactly supported continuous function on $\G$. We assume from now on that $\G$ is provided with a Haar system $(\la^x)_{x\in X}$. Let $L^2(\G)$ be 
   the $C_0(X)$-Hilbert module obtained by  completion of $C_c(\G)$ with respect to the $C_0(X)$-scalar product 
   $$\langle \eta,\eta'\rangle(x)=\int_{\G^x}\bar{\eta}(\ga^{-1})\eta'(\ga^{-1})d\la^x(\ga)$$  for any $\eta$ and $\eta'$ in $C_c(\G)$. An element  $h$ of $C_0(X)$ acts  on $L^2(\G)$ by multiplication by $h\circ s$. 
   
  Let $A$ be a $\G$-algebra. Recall that $r^*A\defi C_0(\G)\ts_rA$ is a $C_0(\G)$-algebra  and that for $h$ in $r^*A$ and $\ga$, then  $h(\ga)\in A_{r(\ga)}$ is  the fiber evaluation of $h$ at $\ga$ under the identification between $(s^*A)_\ga$ and $A_{s(\ga)}$. For  $h$  in $r^*A$, the support of $h$, denoted by $\supp h$,  is  the  complementary of the largest open subset of $\G$ on  which $\ga\mapsto h(\ga)$ vanishes.  Let us set then $C_c(X;\G,r^*A)$  the set of elements of $r^*A$ with compact support. In the same way, we can define $C_c(X;\G,s^*A)$ as the  set of elements of $s^*A$ with compact support. 
     
   If $A$ is a $\G$-algebra, we set $L^2(\G,A)=L^2(\G)\otimes_{s}A$. Notice that $C_c(X;\G,s^*A)$ embeds in $L^2(\G,A)$  and for any    $\eta$ and $\eta'$ in $C_c(X;\G,s^*A)$, the fiber evaluation of $\langle \eta,\eta'\rangle$ at an element $x\in X$ is the element of $A_x$ uniquely determined  by 
    $$\langle \eta,\eta'\rangle(x)=\int_{\G^x}{\eta^*}(\ga^{-1})\eta'(\ga^{-1})d\la^x(\ga).$$  
   
   Recall that  $C_c(X;\G,r^*A)$ is provided with an involutive algebra structure such that
   $$f\cdot g(\ga)=\int_{\G^{r(\ga)}}f({\ga'}){\ga'}(g({\ga'}^{-1}\ga)d\la^{r(\ga)}(\ga')$$ and
   $$f^*(\ga)=\ga(f(\ga^{-1})^*)$$ for any $f$ and $g$ in $C_c(X;\G,r^*A)$ and any  $\ga$ in $\G$.
   Moreover, for any $f$ in  $C_c(X;\G,r^*A)$,  the map    \begin{eqnarray*}
   C_c(X;\G,s^*A)&\to&C_c(X;\G,s^*A)\\
   \xi&\mapsto&f\cdot\xi
   \end{eqnarray*}
   with $$(f\cdot\xi)(\ga)=\int_{\G^{r(\ga)}}\ga^{-1}(f(\ga'))\xi({\ga'}^{-1}\ga)\la^{r(\ga)}(\ga')$$ extends
   to an adjointable endomorphism of $L^2(\G,A)$ and we obtain in this way an involutive and faithful representation of
   $C_c(X;\G,r^*A)$. The reduced crossed product algebra $A\rtr\G$ is then the closure of $C_c(X;\G,r^*A)$ in the algebra  $\mathcal{L}(L^2(\G,A))$ of adjointable endomorphisms of $L^2(\G,A)$.

   \begin{lemma}\label{lemma-scalar}Let $\G$ be a locally compact groupoid  with  space of units  $X$  provided with a Haar system. Let $V$ be an open subset and let $\phi:X\to \C$ be a bounded and continuous function  with support in $V$. Then there exists
   a bounded operator  $$\Lambda^s_\phi:A\rtr\G\to A\rtr\G$$ such that 
   \begin{enumerate}
   \item $\Lambda^s_\phi$ has operator norm bounded by $\sup_{x\in X} |\phi(x)|$;
   \item $\Lambda^s_\phi(h)=h\cdot \phi\circ s$ for all $h$ in $C_c(X;\G,A)$.
   \end{enumerate}
   \end{lemma}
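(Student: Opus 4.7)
Realize $\Lambda^s_\phi$ as right composition on $\mathcal{L}(L^2(\G,A))$ by a suitable bounded adjointable operator $N_\phi$. Concretely, define $N_\phi: L^2(\G,A)\to L^2(\G,A)$ on the dense subspace $C_c(X;\G,s^*A)$ by pointwise multiplication
\begin{equation*}
(N_\phi\xi)(\ga)=\phi(r(\ga))\xi(\ga),\qquad \xi\in C_c(X;\G,s^*A),\;\ga\in\G.
\end{equation*}
Using the formula for the $C_0(X)$-valued inner product and the identity $r(\ga^{-1})=s(\ga)$, one computes
\begin{equation*}
\langle N_\phi\xi, N_\phi\xi\rangle(x) = \int_{\G^x}|\phi(s(\ga))|^2\,\xi(\ga^{-1})^*\xi(\ga^{-1})\,d\la^x(\ga),
\end{equation*}
which is dominated fibrewise in $A_x$ by $(\sup_{y\in X}|\phi(y)|)^2\langle\xi,\xi\rangle(x)$. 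Hence $N_\phi$ extends to an adjointable operator with $\|N_\phi\|\lq\sup_{y\in X}|\phi(y)|$, its adjoint being $N_{\bar\phi}$.

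The key identity is, for $f\in C_c(X;\G,r^*A)$ and $\xi\in C_c(X;\G,s^*A)$,
\begin{equation*}
f\cdot(N_\phi\xi) = (f\cdot\phi\circ s)\cdot\xi,
\end{equation*}
where $f\cdot\phi\circ s\in C_c(X;\G,r^*A)$ denotes the pointwise product. Indeed, using the convolution formula recalled in the excerpt,
\begin{equation*}
(f\cdot N_\phi\xi)(\ga)=\int_{\G^{r(\ga)}}\ga^{-1}(f(\ga'))\,\phi(r(\ga'^{-1}\ga))\,\xi(\ga'^{-1}\ga)\,d\la^{r(\ga)}(\ga'),
\end{equation*}
and the groupoid identity $r(\ga'^{-1}\ga)=s(\ga')$ turns the scalar factor into $\phi(s(\ga'))$, which (commuting with the application of $\ga^{-1}$) absorbs inside to reproduce $\ga^{-1}((f\cdot\phi\circ s)(\ga'))$, i.e., the integrand of $((f\cdot\phi\circ s)\cdot\xi)(\ga)$.

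Now define $\Lambda^s_\phi(a)=a\circ N_\phi$ for $a\in A\rtr\G\subseteq\mathcal{L}(L^2(\G,A))$. Submultiplicativity of the operator norm gives (i), and the key identity above gives (ii) on the subspace $C_c(X;\G,r^*A)$. Since $\Lambda^s_\phi(f)=f\cdot\phi\circ s$ lies in $C_c(X;\G,r^*A)\subseteq A\rtr\G$ for every such $f$, and $C_c(X;\G,r^*A)$ is norm-dense in $A\rtr\G$ while $\Lambda^s_\phi$ is norm-continuous and $A\rtr\G$ is closed, the range of $\Lambda^s_\phi$ is contained in $A\rtr\G$. The only technical subtlety is the bookkeeping in the key identity: $N_\phi$ is defined using $\phi\circ r$, whereas the right-multiplication on $C_c(X;\G,r^*A)$ is by $\phi\circ s$, and the two are reconciled precisely through the groupoid relation $s(\ga')=r(\ga'^{-1}\ga)$ inside the convolution integral. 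The hypothesis $\supp\phi\subseteq V$ plays no role in this lemma itself and will presumably feature in later applications.
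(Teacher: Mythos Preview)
Your proof is correct and follows essentially the same approach as the paper: the paper defines $T_\phi$ on $L^2(\G)$ by pointwise multiplication by $\phi\circ r$, then takes $\Lambda^s_\phi$ to be right multiplication by $T_\phi\otimes_{C_0(X)}\mathrm{Id}_A$ on $\mathcal{L}(L^2(\G,A))$; your operator $N_\phi$ is exactly this tensor product written out explicitly on $C_c(X;\G,s^*A)$. You are more thorough than the paper in actually verifying the key convolution identity $f\cdot(N_\phi\xi)=(f\cdot\phi\circ s)\cdot\xi$ via the groupoid relation $r(\ga'^{-1}\ga)=s(\ga')$, which the paper leaves implicit.
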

   \begin{proof}
   Let us set $M=\sup_{x\in X} |\phi(x)|$. The map $$C_{c}(\G)\lto C_{c}(\G);f \mapsto f\cdot \phi\circ r$$ extends  to an adjointable operator $T_\phi:L^2(\G)\lto L^2(\G)$ such that  $\|T_\phi\|\lq M$. Then right multiplication by $T_\phi\otimes_{C_0(X)}Id_A$ on $\L(L^2(\G,A))$  preserves the subalgebra  $A\rtr\G$ and hence induces a bounded operator   $\Lambda^s_\phi:A\rtr\G\to A\rtr\G$ which satisfies the required conditions.
   \end{proof}
   \begin{remark}
   In the same way, left multiplication by $T_\phi$  on $\L(L^2(\G,A))$ preserve  $A\rtr\G$ and hence induces   
    a bounded operator   $\Lambda_\phi^r:A\rtr\G\to A\rtr\G$ such that  
    \begin{enumerate}
    \item $\Lambda^r_\phi$ has operator norm bounded by $M=sup_{x\in X} |\phi(x)|$ 
    \item $\Lambda^r_\phi(h)=h\cdot \phi\circ r$ for all $h$ in $C_c(X;\G,r^*A)$.
    \item   $\Lambda^s_{\phi}$ and $\Lambda^r_{\phi'}$ commute  for any continuous  and bounded function $\phi':X\to \C$   with support in $V$;
   \item  $\Lambda^r_{\bar{\phi}}\circ\Lambda^s_\phi: A\rtr\G\to A\rtr\G$ is positive with operator norm bounded by $M^2$;
   \item $\Lambda^r_{\bar{\phi}}\circ\Lambda^s_\phi(h)=\bar{\phi}\circ r\cdot h\cdot \phi\circ s$ for all $h$ in $C_c(X;\G,r^*A)$.
   \end{enumerate}

    \end{remark}
    
    For any open subgroupoid $\H$ of $\G$ with unit space $Y$, let $A_{/Y}$ be the closure of  $$\{f\cdot a,\, f\in C_0(Y)\text{ and } a\in A\}$$  in $A$. Then $A_{/Y}$ is a $\H$-algebra and moreover, the Haar system of $\G$ induced by restriction  a Haar system on $\H$. We  will denote the crossed product $A_{/Y}\rtr\H$ by $A\rtr \H$. Notice that since $\H$ is an open subgroupoid of $\G$, then $A\rtr \H$ can be viewed as a $C^*$-subalgebra of $A\rtr G$. 
\section{Controlled Mayer-Vietoris exact sequence in quantitative $K$-theory}
The concept of quantitative operator $K$-theory was first introduced in \cite{yuasymp} for localisation algebras in order to prove the Novikov conjecture for finitely generated groups with finite asymptotic dimension. It has been then extended in  \cite{oy2} to the setting of $C^*$-algebras equipped with a filtration arising from a length. C. Dell'Aiera developped in \cite{dell} quantitative $K$-theory in the general framework of $C^*$-algebras filtered by  abstract coarse structure.

\subsection{Review on quantitative $K$-theory}
In this subsection, we review from  \cite{dell} the main features of quantitative $K$-theory in the framework of  $C^*$-algebras filtered by an abstract coarse structure.
\begin{definition}
A coarse structure $\E$ is an ordered abelian semi-group which is  a lattice for the order. Recall that a lattice is a poset for which every pair $(E, E')$ admits a supremum $E\vee E'$ and an infimum $E\wedge E'$.
\end{definition}
\begin{example}
If $\G$ is a locally compact groupoid, then the semi-group  $(\E_\G,\ast)$ of $\G$-orders partially  ordered  by  the inclusion is a coarse structure with supremum and infimum respectively given by the union and the intersection. The same holds for the set $\E_{\G,c}$ of compact $\G$-orders.
\end{example}
\begin{definition} Let $\E$ be a coarse structure. 
A $\E$-filtered $C^*$-algebra $A$ is a $C^*$-algebra equipped with a family
$(A_E)_{E\in\E}$ of  closed linear subspaces such that:
\begin{itemize}
\item $A_E\subseteq A_{E'}$ if $E\lq E'$;
\item $A_E$ is stable by involution;
\item $A_E\cdot A_{E'}\subseteq A_{E+E'}$;
\item the subalgebra $\ds\bigcup_{E\in\E}A_E$ is dense in $A$.
\end{itemize}
Elements of $A_E$ for $E$ in $\E$ are called elements with $\E$-propagation (less than) $E$.
If $A$ is unital, we also require that the identity  $1$ is an element of $ A_E$ for every $E$ in $\E$. 
\end{definition}
Let $\E$ be a coarse structure and let $A$ and $B$ be two $\E$-filtered $C^*$-algebras.
A $C^*$-algebras homomorphism $\phi:A\to B$ is called $\E$-filtered if 
$\phi(A_E)\subseteq B_E$ for any $E$ in $\E$. 
\begin{example}Let $\G$ be a locally compact groupoid provided with a Haar system and let $A$ be a $\G$-algebra.
  For any $\G$-order $\r$, we define 
  $A\rtr\r$ as the closure in $A\rtr\G$ of the set of element $g$ in $C_c(X;\G,r^*A)$ with support in $\r$.
  Then 
  \begin{itemize}
  \item $(A\rtr\r)_{\r\in\E_\G}$ provides   $A\rtr\G$ with a structure of $\E_\G$-filtred $C^*$-algebra;
  \item  $(A\rtr\r)_{\r\in\E_{\G,c}}$ provides   $A\rtr\G$ with a structure of $\E_{\G,c}$-filtred $C^*$-algebra; 
  \item  if $\H$ is an open subgroupoid of $\G$, then $A\rtr\H$ is a $\E_\G$-filtered  $C^*$-subalgebra   of $A\rtr\H$, i.e $A\rtr \H$ is filtered by $(A\rtr\H)\cap (A\rtr\r)_{\r\in\E_\G}$;
  \item In the same way, $A\rtr\H$ is a $\E_{\G,c}$-filtered  $C^*$-subalgebra   of $A\rtr\G$.
  \end{itemize}
  Notice that if $A$ and $B$ are two $\G$-algebras and if $\phi:A\to B$ is a homomorphism of $\G$-algebras. Then the induced homomorphism
  $$\phi_\G:A\rtr\G\to B\rtr\G$$ is a $\E_{\G}$-filtered homomorphism. The same holds for 
  $\E_{\G,c}$.
\end{example}
Let  $\E$ be a coarse structure and let $A$ be  a     $\E$-filtered $C^*$-algebra.      If $A$ is not unital, let us denote by ${A^+}$ its unitarization, i.e.,
$${A^+}=\{(x,\lambda);\,x\in A\,,\lambda\in \C\}$$  with the product $$(x,\lambda)(x',\lambda')=(xx'+\lambda x'+\lambda' x,\la\la')$$ for all $(x,\lambda)$ and $(x',\lambda')$ in ${A^+}$. Then ${{A}^+}$ is $\E$-filtered with
$${{A}^+_E}=\{(x,\lambda);\,x\in {A}_{E}\,,\lambda\in \C\}$$ for any $E$ in $\E$.
We also define $\rho_A:{A^+}\to\C;\, (x,\lambda)\mapsto \lambda$.

\medbreak

Let $\E$ be a coarse structure and let $A$ be a unital $\E$-filtered $C^*$-algebra. For any  positive
number  $\eps$ with $\eps<1/4$ and any element $E$ in $\E$, we call
\begin{itemize}
\item an element $u$ in $A$  an $\eps$-$E$-unitary if $u$
  belongs to $A_E$,  $\|u^*\cdot
  u-1\|<\eps$
and  $\|u\cdot u^*-1\|<\eps$. The set of $\eps$-$E$-unitaries on $A$ will be denoted by $\operatorname{U}^{\varepsilon,E}(A)$.
\item an element $p$ in $A$   an $\eps$-$E$-projection    if $p$
  belongs to $A_E$,
  $p=p^*$ and  $\|p^2-p\|<\eps$. The set of $\eps$-$E$-projections on $A$ will be denoted by $\operatorname{P}^{\varepsilon,E}(A)$.
\end{itemize} Then $\eps$ is called the control and $E$ is called the propagation of the $\eps$-$E$-projection or of the $\eps$-$E$-unitary.
Notice that an $\eps$-$E$-unitary is invertible, and that if $p$ is an $\eps$-$E$-projection in $A$, then it has a spectral gap around $1/2$ and then gives rise by functional calculus to a  projection $\ka_{0}(p)$  in  $A$ such that
 $\|p-\ka_{0}(p)\|< 2\eps$.

 \begin{lemma}\label{cor-example-homotopy}Let $\E$ be a coarse structure and let $A$ be a unital
 $\E$-filtered $C^*$-algebra. Then for any $\eps$ in $(0,1/12)$ and any $E$ in $\E$ the following holds.
 \begin{enumerate}
 \item Let $u$ and $v$ be  $\eps$-$E$-unitaries in $A$, then $\diag(u,v)$ and
 $\diag(uv,1)$ are homotopic as   $3\eps$-$2E$-unitaries  in   $M_2(A)$; \item Let $u$ be an $\eps$-$E$-unitary in  $A$, then $\diag(u,u^*)$ and
 $I_2$ are homotopic as   $3\eps$-$2E$-unitaries  in   $M_2(A)$.
 \end{enumerate}

 \end{lemma}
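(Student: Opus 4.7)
\textbf{Setup for (i).} The plan is to use the standard \emph{rotation trick}. For $t\in[0,1]$ consider the scalar rotation
\[
R_t=\begin{pmatrix}\cos(t\pi/2) & \sin(t\pi/2)\\ -\sin(t\pi/2) & \cos(t\pi/2)\end{pmatrix}\in M_2(\C\cdot 1_A),
\]
which is a genuine unitary whose entries are scalars, hence lies in $M_2(A_E)$ for every $E\in\E$ (using $1\in A_E$). Define the candidate homotopy
\[
W_t\;=\;\diag(u,1)\,R_t\,\diag(v,1)\,R_t^{*}\quad\in M_2(A).
\]
I would first check the endpoints: $R_0=I_2$ gives $W_0=\diag(uv,1)$, while a direct computation with $R_1=\bigl(\begin{smallmatrix}0&1\\-1&0\end{smallmatrix}\bigr)$ yields $R_1\diag(v,1)R_1^{*}=\diag(1,v)$, so $W_1=\diag(u,v)$. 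Propagation is bounded by $E+E=2E$ since $R_t,R_t^{*}$ contribute scalar entries and the two diagonal factors lie in $M_2(A_E)$.

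\textbf{Estimates for (i).} The key calculation is $\|W_t^{*}W_t-I_2\|<3\eps$. Using unitarity of $R_t$,
\[
W_t^{*}W_t\;=\;R_t\,\diag(v^{*},1)\,R_t^{*}\,\diag(u^{*}u,1)\,R_t\,\diag(v,1)\,R_t^{*}.
\]
Writing $\diag(u^{*}u,1)=I_2+X$ with $\|X\|<\eps$, the first term reduces to $R_t\diag(v^{*}v,1)R_t^{*}$, which is within $\eps$ of $I_2$; the second term is bounded in norm by $\|v^{*}\|\|v\|\,\|X\|\le(1+\eps)\eps$. Since $\eps<1/12$, the total error is $\eps+(1+\eps)\eps=(2+\eps)\eps<3\eps$. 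The estimate for $\|W_tW_t^{*}-I_2\|$ is symmetric. This proves (i).

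\textbf{Reduction of (ii) to a linear interpolation.} Since $u^{*}$ is itself an $\eps$-$E$-unitary ($\|u u^{*}-1\|<\eps$ and $\|u^{*}u-1\|<\eps$ are symmetric conditions), applying (i) with the pair $(u,u^{*})$ produces a $3\eps$-$2E$-homotopy from $\diag(u,u^{*})$ to $\diag(uu^{*},1)$. It remains to join $\diag(uu^{*},1)$ to $I_2$. I would use the straight-line path
\[
h_t\;=\;(1-t)\,uu^{*}+t\cdot 1,\qquad t\in[0,1],
\]
which lies in $A_{2E}$, is self-adjoint, satisfies $\|h_t-1\|\le\|uu^{*}-1\|<\eps$, and hence $\|h_t^{2}-1\|\le\|h_t-1\|\cdot\|h_t+1\|<\eps(2+\eps)<3\eps$. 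Therefore $\diag(h_t,1)$ is a path of $3\eps$-$2E$-unitaries connecting $\diag(uu^{*},1)$ to $I_2$, and concatenating with the previous homotopy yields (ii).

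\textbf{Expected difficulty.} The argument is essentially bookkeeping: the only substantive point is the propagation control, which is clean because the rotation matrices are scalars and multiplication respects the filtration via $A_E\cdot A_{E'}\subseteq A_{E+E'}$. The main nuisance is ensuring the constant stays below $3\eps$ uniformly in $t$ under the assumption $\eps<1/12$; in both (i) and (ii) the worst case evaluates to $(2+\eps)\eps$, which is comfortably under $3\eps$ in the allowed range, so no delicate choice of constants is required.
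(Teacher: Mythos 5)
Your argument is correct, and it is the standard one: the paper itself states this lemma without proof (it is quoted from the quantitative $K$-theory references \cite{oy2,dell}), where the same rotation-trick homotopy $\diag(u,1)R_t\diag(v,1)R_t^*$, with propagation controlled because the rotations are scalar matrices, together with the linear path from $uu^*$ to $1$, is exactly the intended argument. Your norm bookkeeping ($(2+\eps)\eps<3\eps$, and $\|h_t^2-1\|<\eps(2+\eps)$) and the propagation bound $2E$ are both fine.
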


 For any  $n$ integer, we set  $\ue(A)=\operatorname{U}^{\varepsilon,E}(M_n(A))$ and
$\pe(A)=\operatorname{P}^{\varepsilon,E}(M_n(A))$.
Let us  consider the inclusions
$$\P_n^{\eps,E}(A)\hookrightarrow \P_{n+1}^{\eps,E}(A);\,p\mapsto
\begin{pmatrix}p&0\\0&0\end{pmatrix}$$ and
$$\U_n^{\eps,E}(A)\hookrightarrow \U_{n+1}^{\eps,E}(A);\,u\mapsto
\begin{pmatrix}u&0\\0&1\end{pmatrix}.$$ This allows us to  define
 $$\U_{\infty}^{\eps,E}(A)=\bigcup_{n\in\N}\ue(A)$$ and
$$\P_{\infty}^{\eps,E}(A)=\bigcup_{n\in\N}\pe(A).$$

For a unital filtered $C^*$-algebra $A$, we define the
following
equivalence relations on $\P_\infty^{\eps,E}(A)\times\N$ and on  $\U_\infty^{\eps,E}(A)$:
\begin{itemize}
\item if $p$ and $q$ are elements of $\P_\infty^{\eps,R}(A)$, $l$ and
  $l'$ are positive integers, $(p,l)\sim(q,l')$ if there exists a
  positive integer $k$ and an element $h$ of
  $\P_\infty^{\eps,E}(A[0,1])$ such that $h(0)=\diag(p,I_{k+l'})$
and $h(1)=\diag(q,I_{k+l})$.
\item if $u$ and $v$ are elements of $\U_\infty^{\eps,E}(A)$, $u\sim v$ if
  there exists an element $h$ of
  $\U_\infty^{3\eps,2E}(A[0,1])$ such that $h(0)=u$
and $h(1)=v$.
\end{itemize}

If $p$ is an  element of $\P_\infty^{\eps,E}(A)$ and  $l$ is an integer, we
denote by $[p,l]_{\eps,E}$ the equivalence class of $(p,l)$ modulo  $\sim$
and if $u$ is an element of $\U_\infty^{\eps,E}(A)$ we denote by
$[u]_{\eps,E}$ its  equivalence class  modulo  $\sim$.
\begin{definition} Let $\E$ be a coarse structure, let $A$ be a $\E$-filtered $C^*$-algebra,  let $E$ be an element of $\E$ and $\eps$ be positive numbers with
  $\eps<1/4$ 
We define:
\begin{enumerate}
\item $K_0^{\eps,E}(A)=\P_\infty^{\eps,E}(A)\times\N/\sim$ for $A$ unital and
$$K_0^{\eps,E}(A)=\{[p,l]_{\eps,E}\in \P^{\eps,E}({{A^+}})\times\N/\sim \st
\rank \kappa_0(\rho_{A}(p))=l\}$$ for $A$ non unital ($\kappa_0(\rho_{A}(p))$ being the spectral projection associated to $\rho_A(p)$);
\item $K_1^{\eps,E}(A)=\U_\infty^{\eps,E}({{A}})/\sim$ if $A$ is  unital and $K_1^{\eps,E}(A)=\U_\infty^{\eps,E}({{A^+}})/\sim$ if not.
\end{enumerate}
\end{definition}

 Then $K_0^{\eps,E}(A)$ turns to be an abelian group, where
 $$[p,l]_{\eps,E}+[p',l']_{\eps,E}=[\diag(p,p'),l+l']_{\eps,E}$$  for any  $[p,l]_{\eps,E}$ and $[p',l']_{\eps,E}$ in $K_0^{\eps,E}(A)$. According to Corollary \ref{cor-example-homotopy},  $K_1^{\eps,E}(A)$ is
 equipped with a structure of abelian group such that
$$[u]_{\eps,E}+[u']_{\eps,E}=[\diag(u,v)]_{\eps,E},$$ for
any  $[u]_{\eps,E}$ and $[u']_{\eps,E}$ in $K_1^{\eps,E}(A)$.

%
%

 If $\E$ is a coarse structure, we have for any $\E$-filtered $C^*$-algebra $A$, any $E$, $E'$ in $\E$ and any positive numbers  $\eps$ and $\eps'$  with
  $\eps\lq\eps'<1/4$ and $E\lq E'$  natural group homomorphisms called the structure maps:
\begin{itemize}
\item $\iota_0^{\eps,E}:K_0^{\eps,E}(A)\lto K_0(A);\,
[p,l]_{\eps,E}\mapsto [\kappa_0(p)]-[I_l]$ (where  $\kappa_0(p)$ is the spectral projection associated to $p$);
\item $\iota_1^{\eps,E}:K_1^{\eps,E}(A)\lto K_1(A);\,
  [u]_{\eps,E}\mapsto [u]$  ;
\item $\iota_*^{\eps,E}=\iota_0^{\eps,E}\oplus \iota_1^{\eps,E}$;
\item $\iota_0^{\eps,\eps',E,E'}:K_0^{\eps,E}(A)\lto K_0^{\eps',E'}(A);\,
[p,l]_{\eps,E}\mapsto [p,l]_{\eps',E'};$
\item $\iota_1^{\eps,\eps',E,E'}:K_1^{\eps,E}(A)\lto K_1^{\eps',E'}(A);\,
  [u]_{\eps,E}\mapsto [u]_{\eps',E'}$.
\item $\iota_*^{\eps,\eps',E,E'}=\iota_0^{\eps,\eps',E,E'}\oplus\iota_1^{\eps,\eps',E,E'}$
\end{itemize}
If some of the indices $E$, $E'$ or $\eps$, $\eps'$ are equal, we shall not
repeat them  in $\iota_*^{\eps,\eps',E,E'}$.
In order to avoid overloading upperscript   in the structure maps, we shall write 
$\iota_*^{-,\eps',E'}$ for $\iota_{*}^{\eps,\eps',E,E'}$ when $\eps$ and $E$ in the source are implicit, $\iota_{*}^{\eps,E,-}$ for $\iota_{*}^{\eps,\eps',E,E'}$ when $\eps'$ and $E'$ in the range  are implicit and $\iota_{*}^{-,-}$  where  $\eps$ and $E$ in the source and
$\eps'$ and $E'$ in the range are both implicit.

\smallbreak
There is in the setting of quantitative $K$-theory the 
equivalent of the standard form. We first deal with the even case.
\begin{lemma}\label{lemma-almost-canonical-form}
Let $\E$ be a coarse structure and let  $A$  be a non unital  $\E$-filtered $C^*$-algebra. Let $\eps$ be in  $(0,\frac{1}{36})$ and let $E$ be an element in $\E$.  Then for any $x$ in
$K_0^{\eps,E}(A)$, there exist
\begin{itemize}
\item two integers $k$ and $n$ with $k\lq n$;
\item  a  $9\eps$-$E$-projection $q$  in $M_n(\widetilde{A})$
\end{itemize}
such that $\rho_A(q)=\diag(I_k,0)$ and $x=[q,k]_{9 \eps, E}$ in $K_0^{9\eps,E}(A)$.\end{lemma}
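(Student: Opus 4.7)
The approach would be to normalize the scalar part $\rho_A(p)$ to $\diag(I_k,0)$ by conjugating $p$ by a scalar unitary, and then to correct the result by a small scalar matrix so that $\rho_A$ becomes exactly $\diag(I_k,0)$.

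Starting from a representative $x=[p,k]_{\eps,E}$ with $p\in \P_n^{\eps,E}(A^+)$ and $\kappa_0(\rho_A(p))$ a rank-$k$ projection of $M_n(\C)$, I would first pick a scalar unitary $u\in \U_n(\C)\subseteq M_n(A^+)$ that conjugates $\kappa_0(\rho_A(p))$ onto $\diag(I_k,0)$, and set $\tilde p:=u^*pu$. The crucial filtration fact here is that scalar matrices lie in $M_n(A^+)_E$ for every $E\in\E$ and that multiplication by a scalar element does not increase propagation, so $\tilde p$ is still an $\eps$-$E$-projection. Next I would introduce the scalar correction $c:=\diag(I_k,0)-\rho_A(\tilde p)$, which is self-adjoint with $\|c\|<2\eps$ since $\rho_A(\tilde p)=u^*\rho_A(p)u$ lies within $2\eps$ of $u^*\kappa_0(\rho_A(p))u=\diag(I_k,0)$, and define $q:=\tilde p+c$. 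By construction $\rho_A(q)=\diag(I_k,0)$, $q=q^*$, and $q$ still has propagation $\le E$.

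To check that $q\in \P_n^{9\eps,E}(A^+)$, I would expand
\[
q^2-q=(\tilde p^2-\tilde p)+(\tilde pc+c\tilde p-c)+c^2
\]
and bound the summands by $\eps$, $(2\|\tilde p\|+1)\|c\|\le(3+2\eps)(2\eps)=6\eps+4\eps^2$, and $\|c\|^2\le 4\eps^2$ respectively, using the standard estimate $\|\tilde p\|\le 1+\eps$ available for any self-adjoint $\eps$-approximate projection. The hypothesis $\eps<1/36$ then forces $\|q^2-q\|\le 7\eps+8\eps^2<9\eps$. To identify $x$ with $[q,k]_{9\eps,E}$ in $K_0^{9\eps,E}(A)$, I would concatenate two homotopies in $M_{n+k}(A^+[0,1])$ connecting $\diag(p,I_k)$ to $\diag(q,I_k)$: (i) $t\mapsto \diag(u_t^*pu_t,I_k)$ for a continuous path $u_t$ from $I_n$ to $u$ inside $\U_n(\C)$, which stays inside $\P_{n+k}^{\eps,E}(A^+)$ by the scalar-propagation observation; (ii) $s\mapsto \diag(u^*pu+sc,I_k)$ for $s\in[0,1]$, which stays inside $\P_{n+k}^{9\eps,E}(A^+)$ by the same estimate applied at each $s$.

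The principal obstacle is the propagation bookkeeping: one must verify that conjugation by scalar unitaries and addition of scalar matrices keep one inside $M_n(A^+)_E$, which relies on the way the filtration on $A$ extends to the unitization and to matrix algebras. Once this is cleanly in place, the rest is the arithmetic verification that the constants fit under the assumption $\eps<1/36$; the factor $9$ is generous enough to absorb the two cross terms $\|\tilde pc+c\tilde p\|$ and the quadratic $\|c\|^2$.
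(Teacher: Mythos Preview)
The paper states this lemma without proof, as a standard fact about quantitative $K$-theory (the analogue of the usual ``standard form'' for $K_0$-classes, cf.\ \cite{oy2}). Your argument is correct and is the natural one: conjugate by a scalar unitary to bring $\kappa_0(\rho_A(p))$ to $\diag(I_k,0)$, then add the small scalar correction $c=\diag(I_k,0)-\rho_A(\tilde p)$. The key observation you isolate---that scalar matrices lie in $M_n(A^+_E)$ for every $E$ and that left or right multiplication by a scalar matrix preserves $M_n(A^+_E)$ because $A^+_E$ is a linear subspace---is exactly what makes the propagation bookkeeping go through, and your norm estimate $\|q^2-q\|\le 7\eps+8\eps^2<9\eps$ (valid already for $\eps<1/4$; the hypothesis $\eps<1/36$ is only there so that $9\eps<1/4$) is clean. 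The two homotopies you give lie in $\P_{n+k}^{9\eps,E}(A^+[0,1])$, which is precisely what the equivalence relation demands.
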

We have a similar result in the odd case.
\begin{lemma}\label{lemma-almost-canonical-form-odd}
Let $\E$ be a coarse structure and let  $A$  be a non unital  $\E$-filtered $C^*$-algebra. Let $\eps$ be in  $(0,\frac{1}{84})$ and let $E$ be an element in $\E$. 
\begin{enumerate}
\item for any $x$ in $K^{\eps,E}_1(A)$, there exists a   $21\eps$-$E$-unitary $u$ in $M_n({A}^+)$,
such that $\rho_A(u)=I_n$ and $\iota^{\eps,21\eps,E}_1(x)=[u]_{21\eps,E}$  in $K_1^{21\eps,E}(A)$;
\item if $u$ and $v$ are two  $\eps$-$E$-unitaries in  $M_n({A}^+)$ such that $$\rho_A(u)=\rho_A(v)=I_n$$ and 
$$[u]_{\eps,E}=[v]_{\eps,E}$$  in $K_1^{\eps,E}(A)$, then there exists an integer $k$ and  a homotopy $(w_t)_{t\in[0,1]}$ of
$21\eps$-$E$-unitaries of $M_{n+k}({A}^+)$ between $\diag(u,I_k)$ and $\diag(v,I_k)$ such that $\rho_A(w_t)=I_{n+k}$ for every $t$ in $[0,1]$.
\end{enumerate}
\end{lemma}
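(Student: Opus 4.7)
The plan is to prove both assertions by reducing to a scalar normalization argument in $M_n(\C)$.

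For part (i), I would start with an $\eps$-$E$-unitary $u \in M_n(A^+)$ representing $x$ and consider $\la := \rho_A(u) \in M_n(\C)$. Since $\eps < 1$, $\la$ is invertible with $\|\la^{-1}\| \lq (1-\eps)^{-1/2}$, so the element $\tilde u := u \cdot \la^{-1}$, formed using the unital inclusion $M_n(\C) \hookrightarrow M_n(A^+)$, is well defined and satisfies $\rho_A(\tilde u) = I_n$ tautologically. A direct norm computation, using $\|(\la\la^*)^{-1} - I_n\| \lq \eps/(1-\eps)$, shows that $\tilde u$ is a $3\eps$-$E$-unitary, hence \emph{a fortiori} a $21\eps$-$E$-unitary. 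To identify $[\tilde u]_{21\eps, E}$ with $\iota_1^{\eps, 21\eps, E}(x)$, I would construct an explicit homotopy $u_t := u \cdot \mu(t)$ in $M_n(A^+[0,1])$, where $\mu \colon [0,1] \to M_n(\C)$ is a scalar path from $I_n$ to $\la^{-1}$ staying close to the unitary group throughout. The polar decomposition $\la = w|\la|$ with $w \in U_n(\C)$ and $|\la| = (\la^*\la)^{1/2}$ close to $I_n$ suggests the natural choice: first travel from $I_n$ to $w^*$ inside the connected Lie group $U_n(\C)$, then linearly from $w^*$ to $|\la|^{-1}w^* = \la^{-1}$ through positive-times-unitary matrices. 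Since $\mu(t)$ is scalar, propagation remains at $E$, and the $\eps$-$E$-unitary condition is preserved uniformly in $t$ with control bounded by $21\eps$.

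For part (ii), the hypothesis $[u]_{\eps, E} = [v]_{\eps, E}$ yields, by definition, an integer $k$ and a continuous path $(h_t)_{t \in [0,1]}$ of $3\eps$-$2E$-unitaries in $M_{n+k}(A^+)$ from $\diag(u, I_k)$ to $\diag(v, I_k)$. The scalar image $\la_t := \rho_A(h_t) \in M_{n+k}(\C)$ is then a continuous loop of $3\eps$-unitaries based at $I_{n+k}$. I would normalize pointwise in $t$ by setting $w_t := h_t \cdot \la_t^{-1}$. By construction $\rho_A(w_t) = I_{n+k}$ for every $t$, the endpoints coincide with $\diag(u, I_k)$ and $\diag(v, I_k)$ (since $\la_0 = \la_1 = I_{n+k}$), and continuity of $t \mapsto \la_t^{-1}$ follows from the uniform invertibility of $\la_t$. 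Applying the estimate from part (i) pointwise with input control $3\eps$ yields each $w_t$ as a $21\eps$-controlled unitary.

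The main obstacle is twofold. First, one must carefully track the multiplicative factors $(1-c\eps)^{-1}$ coming from $\|\la^{-1}\|$ bounds and from the unitary-then-positive scalar path, verifying that under the standing hypothesis $\eps < 1/84$ they accumulate to at most $21\eps$ — a routine but delicate $\eps$-chase that calibrates the otherwise mysterious constant $21 = 3 \times 7$. Second, the construction in (ii) naturally inherits propagation $2E$ from the original homotopy $(h_t)$: even though $\la_t^{-1}$ is scalar and contributes no propagation, the product $h_t \cdot \la_t^{-1}$ has propagation only bounded by that of $h_t$, namely $2E$. Obtaining the asserted propagation $E$ thus either requires a sharper construction than the direct normalization I propose, or else a re-reading of the statement in which $2E$ is the intended propagation of $w_t$.
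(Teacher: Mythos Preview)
The paper does not prove this lemma: it appears in the review subsection on quantitative $K$-theory and is stated as a background fact (the analogous results are in \cite{oy2} and \cite{dell}). So there is no paper proof to compare against, and your proposal must stand on its own.

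Your argument is the standard one and is essentially correct. For part~(i), multiplying on the right by the scalar $\la^{-1}=\rho_A(u)^{-1}$ is the canonical normalization; since $\la^{-1}\in M_n(\C)\subset M_n(A^+_E)$ acts by scalar linear combinations of entries, the product $u\la^{-1}$ indeed stays in $M_n(A^+_E)$, and your polar--decomposition path $\mu(t)$ keeps the control well below $21\eps$ (in fact below $3\eps$, as you note). For part~(ii), the pointwise normalization $w_t=h_t\,\la_t^{-1}$ is again the natural move, and the endpoint and continuity checks are straightforward.

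The only genuine issue is the one you already flag: the homotopy $(h_t)$ furnished by the equivalence relation defining $K_1^{\eps,E}$ has propagation $2E$, not $E$, and right multiplication by the scalar $\la_t^{-1}$ cannot decrease propagation. Your construction therefore yields a path of $21\eps$-$2E$-unitaries, which is exactly what is needed to conclude $[u]_{21\eps,E}=[v]_{21\eps,E}$ (since that equivalence relation allows homotopies through $63\eps$-$2E$-unitaries), but is formally weaker than the literal statement. There is no evident trick to recover propagation $E$ from a $2E$-homotopy in general, so your reading that ``$2E$ is the intended propagation'' is almost certainly correct; the discrepancy is a minor imprecision in the statement rather than a gap in your argument. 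For the applications in the paper (the construction of the Mayer--Vietoris boundary map in the non-unital case), only part~(i) is invoked, and there the propagation is unambiguously $E$.
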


 Let  $\E$ be a coarse structure and let  $\phi:A\to B$ be  a  homomorphism  of $\E$-filtered $C^*$-algebras. Then  $\phi$ preserves $\eps$-$E$-projections
 and $\eps$-$E$-unitaries and hence $\phi$ induces  for any $E$ in $\E$ and any
$\eps\in(0,1/4)$ a
group homomorphism $$\phi_*^{\eps,E}:K_*^{\eps,E}(A)\longrightarrow
K_*^{\eps,E}(B).$$  Moreover quantitative $K$-theory is homotopy invariant with respect to homotopies which  preserve  $\E$-propagation
\cite[Lemma 1.26]{oy2}.
There is also a quantitative version of Morita equivalence \cite[Proposition 1.28]{oy2}.
\begin{proposition}\label{prop-morita} Let $\E$ be a coarse structure, let
 $A$ be  a $\E$-filtered algebra and let $\H$ be  a separable Hilbert space, then the homomorphism
$$A\to \Kp(\H)\otimes A;\,a\mapsto \begin{pmatrix}a&&\\&0&\\&&\ddots\end{pmatrix}$$
induces a ($\Z_2$-graded) group isomorphism (the Morita equivalence)
$$\MM_A^{\eps,E}:K_*^{\eps,E}(A)\to K_*^{\eps,E}(\Kp(\H)\otimes A)$$
for any $E$ in $\E$  and any
$\eps\in(0,1/4)$.
\end{proposition}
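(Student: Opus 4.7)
The plan is to prove the Morita isomorphism by constructing an explicit inverse via a ``rotation'' procedure that pushes any $\eps$-$E$-projection or $\eps$-$E$-unitary in $M_n(\Kp(\H)\ts A)$ into the single matrix corner $e_{11}\ts M_k(A)$. Throughout, I take the filtration on $\Kp(\H)\ts A$ to be the tensor-product filtration $(\Kp(\H)\ts A)_E=\Kp(\H)\ts A_E$, so that the map $\iota_A:a\mapsto e_{11}\ts a$ is manifestly $\E$-filtered and hence $\MM_A^{\eps,E}$ is well-defined on each quantitative $K$-group.

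The geometric crux of the argument is that conjugation by a unitary of the form $u\ts 1$, with $u$ a unitary in $1+\Kp(\H)$, preserves the $E$-layer exactly, since $(u\ts 1)(k\ts a)(u^*\ts 1)=uku^*\ts a$ leaves the $A$-factor untouched. Such unitaries lie in the unitization $(\Kp(\H)\ts A)^+$ (via $u\ts 1=1+(u-1)\ts 1$) and are norm-path-connected to the identity, so they supply a family of rotations, internal to the algebra, which transport propagation-$E$ elements to propagation-$E$ elements. Given an $\eps$-$E$-projection $p\in M_n((\Kp(\H)\ts A)^+)$, I cut $p$ down by a rank-$k$ projection $P_k\ts 1$, for $k$ large enough that $(P_k\ts 1)p(P_k\ts 1)$ is norm-close to $p$; then I rotate the resulting near-projection into the corner $e_{11}\ts M_k(A^+)$, producing an $\eps'$-$E$-projection $q\in M_k(A^+)$ with $\iota_A(q)$ homotopic to $p$ through $\eps'$-$E$-projections. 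This recovers the inverse of $\MM_A^{\eps,E}$. Injectivity is parallel: a homotopy between $\iota_A(p)$ and $\iota_A(q)$ is conjugated into the corner by a time-varying path of rotations, giving a homotopy of $\eps'$-$E$-projections in $M_k(A^+)$. The $K_1$ case is analogous, the $3\eps$-$2E$ slack in the homotopy relation of $K_1^{\eps,E}$ comfortably absorbing the minor control loss from the finite-rank approximation.

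The main obstacle is tracking the $\eps$-loss from the finite-rank cut-off $(P_k\ts 1)p(P_k\ts 1)\approx p$ and verifying it remains inside the tolerance of the quantitative $K$-group. The norm error can be made arbitrarily small by choosing $k$ large (strict convergence of $P_k\ts 1$ to the unit, combined with norm density of finite-rank-in-$\Kp(\H)$ elements inside $\Kp(\H)\ts A$), and small norm perturbations of $\eps$-$E$-projections remain $(\eps+\delta)$-$E$-projections; hence the loss fits within $K_*^{\eps,E}$ itself, and does not require applying any nontrivial structure map $\iota_*^{-,\eps',E}$. The ancillary issue of ensuring the rotations live in $(\Kp(\H)\ts A)^+$ rather than in its multiplier algebra is resolved precisely by the choice $u-1\in\Kp(\H)$, and path-connectedness within this subgroup is classical via the spectral path $\exp(it\log u)$.
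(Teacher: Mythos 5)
The paper does not actually prove this proposition; it is quoted from \cite[Proposition 1.28]{oy2} (extended to abstract coarse structures in \cite{dell}), and your strategy --- compress by $P_k\ts 1$ into a finite corner, which leaves the layers $\Kp(\H)\ts A_E$ untouched, absorb the small norm error using the strictness of the inequalities defining $\eps$-$E$-projections and $\eps$-$E$-unitaries, and then move the corner onto $e_{11}\ts A$ by rotations that are scalar in the $A$-leg --- is exactly the standard argument behind that reference, including the correct observation that no rescaling of $(\eps,E)$ is needed. However, two steps as you state them are false, and they are precisely the points where the care lies. First, $(P_k\ts 1)p(P_k\ts 1)$ does \emph{not} converge in norm to $p$ for $p$ in matrices over the unitization $(\Kp(\H)\ts A)^+$ with nonzero scalar part; and this situation is unavoidable, since $\Kp(\H)\ts A$ is never unital, so every $K_1^{\eps,E}$-class is represented by a unitary whose scalar part is a unitary scalar matrix, and $K_0^{\eps,E}$-classes $[p,l]$ with $l>0$ have $\rho(p)\neq 0$. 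You must write $p=\rho(p)+x$ with $x$ in matrices over $\Kp(\H)\ts A$ and compress only $x$, keeping the scalar part (hence the integer label and the rank condition in the definition of $K_0^{\eps,E}$) fixed; with this modification the linear path to the compressed element does stay in matrices over $(\Kp(\H)\ts A)^+$ and within the same $(\eps,E)$.

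Second, for non-unital $A$ the rotation $u\ts 1$ with $u$ unitary in $1+\Kp(\H)$ is \emph{not} an element of $(\Kp(\H)\ts A)^+$: the formula $u\ts 1=1+(u-1)\ts 1$ requires $1\in A$, and otherwise these rotations live only in the multiplier algebra of $\Kp(\H)\ts A$. Since the homotopies entering the definition of $K_*^{\eps,E}$ must run inside matrices over $(\Kp(\H)\ts A)^+$ (respectively over $((\Kp(\H)\ts A)[0,1])^+$), you cannot simply conjugate: along the path the conjugate of the scalar part need not remain of the form ``scalar plus element of $M_\bullet(\Kp(\H)\ts A)$''. The repair is standard but must be said: use conjugating unitaries which either are genuinely scalar matrices (permutations and $2\times 2$ rotations in $M_m(\C)$, which preserve the scalar-plus-ideal decomposition and the rank of the scalar part), or are block-diagonal repetitions of a single $u$ in the $\H$-leg chosen to commute with the scalar part, and perform the $2\times 2$ rotation trick so that the whole path has this property; all such unitaries are scalar in the $A$-direction and hence preserve propagation exactly, so the isomorphism is indeed obtained at the given $(\eps,E)$. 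With these two corrections your proof is complete and coincides with the argument of the cited source.
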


The following observation establishes a connection between quantitative $K$-theory and classical $K$-theory (see \cite[Remark 1.17]{oy2}).
\begin{proposition}\label{proposition-approximation-K-th}
Let  $\E$ be a coarse structure.
\begin{enumerate}
\item Let $A$ be a $\E$-filtered $C^*$-algebra. For any positive number $\eps$ with $\eps< \frac{1}{4}$ and  any element $y$ of $K_*(A)$,  there exists  $E$ in $\E$  and an element  $x$ of  $K_*^{\eps,R}(A)$ such that $\iota_{*}^{\eps,E}(x)=y$;
\item There exists a positive number $\lambda_0>1$ such that for any $\E$-filtered $C^*$-algebra $A$, any $E$ in $\E$, any  $\eps$  in  $(0,\ \frac{1}{4\lambda_0})$ and any element $x$ of  $K_*^{\eps,E}(A)$ for which  $\iota_*^{\eps,E}(x)=0$ in $K_*(A)$, then there exists $E'$ in $\E$  with $E'\gq E$ such that $\iota_*^{\eps,\lambda_0\eps,E,E'}(x)=0$ in $K_*^{\lambda_0\eps,E'}(A)$.
\end{enumerate}
\end{proposition}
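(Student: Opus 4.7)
The plan is to establish both statements by approximation arguments that exploit the density of $\bigcup_{E \in \E} A_E$ in $A$ together with the spectral/path tools available in $K$-theory.

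For statement (i), I would represent $y \in K_*(A)$ by an honest projection $p$ in some matrix algebra $M_n(A^+)$ (or an honest unitary, in the odd case). Since $\bigcup_E A_E$ is dense in $A$, for any tolerance $\eta > 0$ I can find $z \in M_n((A_E)^+)$ with $\|z - p\| < \eta$, taking $z$ self-adjoint by averaging $z$ with $z^*$. Standard perturbation estimates show that for $\eta$ small enough depending only on $\eps$, the approximant $z$ is an $\eps$-$E$-projection. The resulting class $[z,l]_{\eps,E}$ maps under $\iota_0^{\eps,E}$ to $[\kappa_0(z)] - [I_l]$, which coincides with $y$ since $\kappa_0(z)$ is norm-close to $p$ and hence homotopic to it among genuine projections. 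The $K_1$ case is parallel: density lets one approximate a unitary in $M_n(A^+)$ by an element $v \in M_n((A_E)^+)$, and for $\|v - u\|$ small enough $v$ is an $\eps$-$E$-unitary with the same class in $K_1(A)$.

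For statement (ii), suppose $x \in K_*^{\eps,E}(A)$ satisfies $\iota_*^{\eps,E}(x) = 0$. Put $x$ into standard form using Lemma \ref{lemma-almost-canonical-form} (even case) or Lemma \ref{lemma-almost-canonical-form-odd} (odd case), paying a multiplicative factor $9$ or $21$ on $\eps$. In the even case, write $\iota_0^{\eps, 9\eps, E}(x) = [q,k]_{9\eps,E}$ with $\rho_A(q) = \diag(I_k, 0)$. The hypothesis gives $[\kappa_0(q)] = [I_k]$ in $K_0(A)$, so after stabilising by some $m$ there is a genuine continuous path of projections $(e_t)_{t \in [0,1]}$ in $M_{n+m}(A^+)$ from $\diag(\kappa_0(q), 0_m)$ to $\diag(I_{k}, 0_{n+m-k})$. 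This path is compact in $M_{n+m}(A^+)$; using density of $\bigcup_E A_E$ together with the lattice property of $\E$ to form a single majorant, I approximate each $e_t$ uniformly in $t$ by $e_t' \in M_{n+m}((A_{E'})^+)$ for some fixed $E' \geq E$, with the uniform error small enough that $(e_t')_{t \in [0,1]}$ is a path of $\lambda_0 \eps$-$E'$-projections after self-adjoint averaging. The resulting homotopy witnesses $\iota_0^{\eps, \lambda_0 \eps, E, E'}(x) = 0$ in $K_0^{\lambda_0 \eps, E'}(A)$. The odd case is analogous, using Lemma \ref{lemma-almost-canonical-form-odd} to replace $x$ by a $21\eps$-$E$-unitary $u$ with $\rho_A(u) = I_n$, observing that $[u] = 0$ in $K_1(A)$ produces a homotopy of genuine unitaries in $M_{n+k}(A^+)$ between stabilisations of $u$ and $I$, and approximating this homotopy by one of $\lambda_0\eps$-$E'$-unitaries.

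The main obstacle is to extract a \emph{universal} constant $\lambda_0$ independent of $A$, $E$ and $\eps$. One has to combine the fixed multiplicative factors $9$ and $21$ coming from the canonical form lemmas, the factor $3$ built into the definition of equivalence of $\eps$-$E$-unitaries via $3\eps$-$2E$-homotopies (cf. Lemma \ref{cor-example-homotopy}), and an additional buffer absorbing the perturbation losses from the density argument. All the perturbation inequalities invoked are polynomial in $\eps$ with universal coefficients, and compactness of $[0,1]$ lets us choose a single $E' \geq E$ that works uniformly along the homotopy; these two features are precisely what makes $\lambda_0$ universal and finalises the proof.
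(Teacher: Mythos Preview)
The paper does not supply a proof of this proposition in-text; it is stated with a reference to \cite[Remark 1.17]{oy2} and then specialised to the groupoid setting in the subsequent Lemma~\ref{lemma-approx}. Your density-and-approximation argument is exactly the standard route underlying that reference: approximate a genuine projection or unitary (respectively, a homotopy of such) by elements of controlled propagation using the density of $\bigcup_E A_E$, and keep track of the universal constants coming from the canonical-form lemmas and the perturbation estimates. So your proposal is correct and matches what the paper implicitly relies on.

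One detail worth making explicit in part (ii): after approximating the compact path $(e_t)$ by a path $(e'_t)$ of $\lambda_0\eps$-$E'$-projections, you must ensure that the endpoints line up with $\diag(q,0_m)$ and $\diag(I_k,0)$ rather than with $\diag(\kappa_0(q),0_m)$ and an approximation of $\diag(I_k,0)$. This is handled by concatenating the linear segment from $q$ to $\kappa_0(q)$ (which stays among $\lambda_0\eps$-$E'$-projections since $\|q-\kappa_0(q)\|<2\eps$ and $q$ already has propagation $E\leq E'$) and noting that $\diag(I_k,0)$ already lies in $M_{n+m}((A_{E'})^+)$. This is routine but should be mentioned for completeness; the analogous endpoint adjustment applies in the odd case.
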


Apply to $\G$-orders of a locally compact groupoid provided with a Haar system, we deduce the following result.
 \begin{lemma}\label{lemma-approx}
  Let $\G$ be a locally compact groupoid and let $A$ be a $\G$-algebra.
  \begin{enumerate}
  \item for every $\eps$ in $(0,1/4)$ and any $y$ in $K_*(A\rtr \G)$, there exist a compact $\G$-order $\r$  such that $\iota^{\eps,\r}_*(x)=y$.
  \item there exists $\la_0\gq 1$ such that for any $\eps$ in $(0,\frac{1}{4\la_0})$, any $\G$-order $\r$ and any $x$ in $K_*^{\eps,\r}(A\rtr\G)$ satisfying $\iota^{\eps,\r}_*(x)=0$ in $K_*(A\rtr \G)$,  there exist a $\G$-order $\r'$ with $\r\subseteq\r'$ such that $\iota^{\eps,\la_0\eps,\r,\r'}_*(x)=0$ in $K_*^{\la_0\eps,\r'}(A\rtr\G)$. The constant $\la_0$  depends neither on $A$ nor on $\G$. Moreover, if $\r$ is compact, then $\r'$ can be chosen compact.
  \end{enumerate}
  \end{lemma}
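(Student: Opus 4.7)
The plan is to deduce the lemma directly from Proposition \ref{proposition-approximation-K-th} applied to the two coarse structures $\E_\G$ and $\E_{\G,c}$ associated with $\G$. As recorded in the example opening this section, $A\rtr\G$ is simultaneously a $\E_\G$-filtered $C^*$-algebra via $(A\rtr\r)_{\r\in\E_\G}$ and a $\E_{\G,c}$-filtered $C^*$-algebra via $(A\rtr\r)_{\r\in\E_{\G,c}}$. A preliminary observation is that, for any compact $\G$-order $\r$, the quantitative $K$-theory group $K_*^{\eps,\r}(A\rtr\G)$ is the same whether computed from the $\E_\G$-filtration or from the $\E_{\G,c}$-filtration: indeed, the definition of an $\eps$-$\r$-projection or unitary only involves the subspace $M_n(A\rtr\r)$ together with the usual inequalities, and homotopies are taken at fixed propagation $\r$ within $M_n(C([0,1],A\rtr\r))$, so everything is insensitive to the ambient lattice of propagations.

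For assertion (i), I apply Proposition \ref{proposition-approximation-K-th}(i) to the $\E_{\G,c}$-filtered algebra $A\rtr\G$: for $\eps\in(0,1/4)$ and $y\in K_*(A\rtr\G)$, this directly produces a compact $\G$-order $\r\in\E_{\G,c}$ and a class $x\in K_*^{\eps,\r}(A\rtr\G)$ with $\iota_*^{\eps,\r}(x)=y$, which is exactly what is claimed.

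For assertion (ii), I take $\la_0\gq 1$ to be the universal constant supplied by Proposition \ref{proposition-approximation-K-th}(ii), replacing it with the larger of the two constants that arise when the proposition is applied to the $\E_\G$- and $\E_{\G,c}$-filtrations if these differ, so as to obtain a single $\la_0$ that depends neither on $A$ nor on $\G$. For a general $\G$-order $\r$, applying the proposition to the $\E_\G$-filtered algebra $A\rtr\G$ yields a $\G$-order $\r'$ with $\r\subseteq\r'$ and $\iota_*^{\eps,\la_0\eps,\r,\r'}(x)=0$ in $K_*^{\la_0\eps,\r'}(A\rtr\G)$. If $\r$ is already compact, applying instead the proposition to the $\E_{\G,c}$-filtered algebra returns a compact $\G$-order $\r'$ with the analogous vanishing property, in view of the identification of the two $K^{\eps,\r}_*$ groups noted above.

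The only non-formal step is the identification of the two interpretations of $K_*^{\eps,\r}(A\rtr\G)$ for compact $\r$, which is an immediate unpacking of the definitions and allows one to pass freely between the two coarse structures; the remainder of the proof is a straightforward appeal to Proposition \ref{proposition-approximation-K-th}.
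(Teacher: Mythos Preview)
Your proof is correct and takes essentially the same approach as the paper, which simply introduces the lemma with ``Apply to $\G$-orders of a locally compact groupoid provided with a Haar system, we deduce the following result'' and gives no further argument. You have just made explicit the use of the two filtrations $\E_\G$ and $\E_{\G,c}$ and the fact that $K_*^{\eps,\r}(A\rtr\G)$ is insensitive to which of these is used when $\r$ is compact, details the paper leaves to the reader.
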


Quantitative $K$-theory inherits many features from $K$-theory. In particular there is a quantitative version of Bott periodicity and of the six-term exact sequence.


\subsection{Controlled Mayer-Vietoris pair}
The concept of controlled Mayer-Vietoris pair  was introduced in \cite{oy4} to streamline the 
"cut-and-pasting" technology developed by G. Yu in \cite{yuasymp} to prove the Novikov conjecture for groups with finite asymptotic dimension. It was then extended in \cite{dell} to the general setting of $C^*$-algebras filtered by a coarse structure. It gives rise to a controlled exact
sequence that allows to compute the $K$-theory by letting the propagation go to infinity.

\begin{definition}Let $\E$ be a coarse structure, 
let  $A$ be a $\E$-filtered $C^*$-algebra, let $E$ be an  element of $\E$ and let  $\Delta$ be a closed  linear subspace of $A_E$. Then a sub-$C^*$-algebra $B$ of $A$ is called an $E$-controlled  $\Delta$-neighborhood-$C^*$-algebra  if
\begin{itemize}
\item $B$ is filtered by $(B\cap A_{E'})_{E'\in\E}$;
\item  $\Delta+A_{5E}\cdot \Delta+\Delta\cdot A_{5E}+A_{5E} \cdot \Delta \cdot A_{5E}\subseteq B$.
\end{itemize}
\end{definition}

\begin{definition}
Let $\E$ be a coarse structure, 
let  $A$ be a $\E$-filtered $C^*$-algebra, let $E$ be an  element of $\E$ and let $c$ be a positive number.
 A   complete coercive decomposition pair of order  $E$ for
$A$ is a pair   $(\Delta_1,\Delta_2)$  of  closed linear subspaces of $A_E$  such that
for any $E'$ in $\E$ with $E'\lq E$, for any integer $n$ and for any $x$ in $M_n(A_{E'})$, there exists $x_1$ in $M_n(\De_1\cap A_{E'})$ and $x_2$ in $M_n(\De_2\cap A_{E'})$, both with norm at most $c\|x\|$ and such that 
$x=x_1+x_2$.  The positive  number $c$ is called the coercivity  of 
   $(\De_1,\De_2)$. 
\end{definition}

\begin{definition}\label{def-CIA} Let $S_1$ and $S_2$ be two subsets of a $C^*$-algebra $A$.
The pair $(S_1, S_2)$ is said to have  \emph{complete intersection approximation}  property (CIA) if
there exists $c>0$ such that for any positive number $\eps$, any integer $n$, any   $x\in M_n(S_1)$ and any $ y\in M_n(S_2)$  with $||x-y||<\eps $,
 there exists $z\in M_n(S_1\cap S_2)$ satisfying
$$||z-x||<c \eps, ~~~~||z-y||<c\eps.$$ The positive number $c$ is called the {\bf coercivity} of the pair 
$(S_1, S_2)$.
\end{definition}

\begin{definition}\label{definition-coarse-MV-pair}
Let  $\E$ be a coarse structure, let $A$ be a  $\E$-filtered  $C^*$-algebra  and  let $E$  be an element in $\E$ . An $E$-controlled  Mayer-Vietoris pair for $A$  is a quadruple
 $(\De_1,\De_2,A_{\Delta_1},A_{\Delta_2})$ such that for some positive number $c$.
\begin{enumerate}
\item  $(\De_1,\De_2)$ is a completely coercive  decomposition pair for $A$ of order $E$ with coercitivity $c$.
\item  $A_{\Delta_i}$ is an $E$-controlled   $\Delta_i$-neighborhood-$C^*$-algebra  for $i=1,2$;
\item the pair $ (A_{ \Delta_1, E'}, A_{\Delta_2, E'})$ has the CIA property   with coercivity $c$ for any $E'$ in $\E$.
\end{enumerate}
The positive number $c$ is called the {\bf coercivity} of the $E$-controlled Mayer-Vietoris pair $(\De_1,\De_2,A_{\Delta_1},A_{\Delta_2})$.
\end{definition}

\begin{remark}
In the above definition,
\begin{enumerate}
\item $(\Delta_1\cap A_{E'},\Delta_2\cap A_{E'},A_{\Delta_1},A_{\Delta_2})$ is an  ${E'}$-controlled Mayer-Vietoris pair for any $E'$ in $\E$ with $E'\lq E$  with same coercivity as 
$(\De_1,\De_2,A_{\Delta_1},A_{\Delta_2})$.
\item {$A_{\De_1}\cap A_{\De_2}$ is $\E$-filtered by  $(A_{\De_1,E}\cap A_{\De_2,E})_{E\in\E}$.}
\item If $A$  is a unital , we will view  ${A^+_{\Delta_1}}$  the unitarization of  ${A_{\Delta_1}}$ as
${A_{\Delta_1}}+\C\cdot 1\subseteq A$  and similarly for ${A_{\Delta_2}}$ and  ${A_{\Delta_1}}\cap {A_{\Delta_2}}$.
\end{enumerate}
\end{remark}
For purpose of rescalling the control and the propagation of an $\eps$-$E$-projection or of an $\eps$-$E$-unitary, we introduce the  following   concept of $\E$-control pair.

\begin{definition} A control pair  is a pair $(\lambda,h)$, where
\begin{itemize}
 \item $\lambda$ is a positive number  with $\lambda >1$;
\item  $h:(0,\frac{1}{4\lambda})\to \N\setminus\{0\};\, \eps\mapsto h_\eps$  is a   non-increasing map.
\end{itemize}\end{definition}
 The set of control pairs is equipped with a partial order:
 $(\lambda,h)\lq (\lambda',h')$ if $\lambda\lq\lambda'$ and $h_\eps\lq h'_\eps$
for all $\eps$ in $(0,\frac{1}{4\lambda'})$.

\begin{proposition}\label{proposition-decom-unitaries} For every positive number $c$, 
there exists a control pair $(\alpha,l)$ such that the following holds.

\medbreak

Let $\E$ be a coarse structure, let $A$  be a  unital  $\E$-filtered $C^*$-algebra, let $E$ be an element in $\E$,  let
$(\De_1,\De_2,A_{\Delta_1},A_{\Delta_2})$ be 
 controlled  Mayer-Vietoris pair for $A$ of order $E$ and coercivity $c$.
 
 \smallbreak
 
Then  for any $\eps$ in     $(0,\frac{1}{4\alpha})$ and  any       $\eps$-$E$-unitary $u$ in $A$ homotopic to $1$, there exist
a positive  integer $k$ and $w_1$ and $w_2$  two $\alpha\eps$-$l_\eps E$-unitaries in $M_k(A)$ such that
\begin{itemize}
\item $w_i-I_k$ is an  element  in the matrix algebra   $M_k(A_{\De_i})$ for $i=1,2$;
\item for $i=1,2$, there exists  a homotopy $(w_{i,t})_{t\in[0,1]}$ of  $\alpha\eps$-$l_\eps E$-unitaries between  $1$ and $w_i$ such that
$w_{i,t}-I_k\in M_k(A_{\De_i})$ for all $t$ in $[0,1]$;
\item $\|\diag(u,I_{k-1})-w_1w_2\|<\alpha\eps$.
\end{itemize}
If $A$ is a non unital $\E$-filtered $C^*$-algebra, then the same result holds for $u$ in $A^+$ such that $u-1$ is in $A$ and $u$ is homotopic to $1$ as an $\eps$-$E$-unitary in $A^+$.
\end{proposition}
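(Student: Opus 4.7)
The plan is to discretize a homotopy from $I$ to $u$, apply the decomposition pair at each infinitesimal step, and then consolidate the resulting alternating product into two factors using the CIA property. By Lemma~\ref{lemma-almost-canonical-form-odd}, after replacing $u$ with $\diag(u, I_{k-1})$ for some $k$ and absorbing a universal constant into $\al$, we may assume $u \in M_k(A^+)$ satisfies $\rho_A(u) = I_k$ and is connected to $I_k$ by a path $(u_t)_{t \in [0,1]}$ of $\eps'$-$E$-unitaries with $\rho_A(u_t) = I_k$ and $\eps' = O(\eps)$. Pick $N = N(\eps)$ and a partition $0 = t_0 < \cdots < t_N = 1$ such that $\|u_{t_{j+1}} - u_{t_j}\| < \delta$, where $\delta = O(\eps/N)$ is to be fixed later. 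Each difference $u_{t_{j+1}} - u_{t_j} \in M_k(A_E)$ has norm $< \delta$, so the complete coercive decomposition pair of order $E$ yields $a_j \in M_k(\De_1)$ and $b_j \in M_k(\De_2)$ with $u_{t_{j+1}} - u_{t_j} = a_j + b_j$ and $\|a_j\|, \|b_j\| \lq c\delta$. Setting $\tilde a_j := u_{t_j}^{-1} a_j \in M_k(A_{\De_1})$ and $\tilde b_j := u_{t_j}^{-1} b_j \in M_k(A_{\De_2})$, both of propagation $\lq 2E$, we obtain
\begin{equation*}
u_{t_{j+1}} = u_{t_j}(I_k + \tilde a_j + \tilde b_j) \approx u_{t_j}(I_k + \tilde a_j)(I_k + \tilde b_j)
\end{equation*}
with error $O(\delta^2)$.

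I then build sequences $w_1^{(j)}, w_2^{(j)}$ inductively, with $w_i^{(0)} = I_k$, $w_i^{(j)} - I_k \in M_k(A_{\De_i})$, and $u_{t_j} \approx w_1^{(j)} w_2^{(j)}$ to within cumulative error $O(j\delta^2)$. The inductive step requires rearranging
\begin{equation*}
w_1^{(j)} w_2^{(j)}(I_k + \tilde a_j)(I_k + \tilde b_j)
\end{equation*}
into the form $w_1^{(j+1)} w_2^{(j+1)}$, preserving the neighborhood-algebra memberships. The swap of $w_2^{(j)}$ past $(I_k + \tilde a_j)$ produces a commutator-type discrepancy of norm $O(\delta \cdot \|w_2^{(j)} - I_k\|)$ lying in $A_{\De_1} + A_{\De_2}$; the CIA property of $(M_k(A_{\De_1, E'}), M_k(A_{\De_2, E'}))$, with its uniform coercivity, splits this discrepancy so it can be absorbed into $w_1^{(j+1)}$ and $w_2^{(j+1)}$. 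Choosing $N$ and $\delta$ so that the cumulative error $O(N\delta^2)$ remains below $\al\eps$, and taking $l_\eps$ large enough for the propagations of $w_i^{(N)}$ to fit within $l_\eps E$, yields $w_i := w_i^{(N)}$ with the required properties. The requested homotopies of each $w_i$ to $I_k$ inside $M_k(A_{\De_i})$ arise by running the same procedure on the truncated paths $(u_t)_{t \in [0,s]}$ and interpolating in $s$.

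The main obstacle is the inductive swap: each iteration introduces a commutator error that must be redistributed between $A_{\De_1}$ and $A_{\De_2}$ while simultaneously preserving the unitary defect bound $\al\eps$, the propagation bound $l_\eps E$, and the two membership conditions, across $N \sim 1/\eps$ iterations. The CIA property is precisely what makes this possible, and a careful bookkeeping of the dependencies of $\delta$ and $l_\eps$ on $\eps$ produces the required control pair $(\al, l)$ independent of the particular $\E$-filtered algebra $A$. The non-unital statement reduces to the unital one by working throughout in $A^+$.
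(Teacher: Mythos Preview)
The paper does not prove this proposition; it is quoted from \cite{oy4} (extended to abstract coarse structures in \cite{dell}) as part of the review in Section~3.

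Your inductive swap has a genuine gap. Commuting $w_2^{(j)}$ past $I_k+\tilde a_j$ produces corrections containing products such as $(w_2^{(j)}-I_k)\,\tilde a_j$, with one factor in $A_{\De_2}$ and the other in $A_{\De_1}$. Such products lie in $A$ but in general in neither $A_{\De_1}$ nor $A_{\De_2}$ nor their sum: the neighborhood-algebra condition $A_{5E}\cdot\De_i\cdot A_{5E}\subseteq A_{\De_i}$ only concerns products with the \emph{original} subspace $\De_i\subseteq A_E$, not with arbitrary elements of the subalgebra $A_{\De_i}$. The CIA property does not help either---it takes a \emph{pair} of close elements, one from each subalgebra, and returns an element of the intersection; it does not split a single element of $A_{\De_1}\cdot A_{\De_2}$ into pieces lying in the two subalgebras. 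Once the cross-terms escape $A_{\De_1}+A_{\De_2}$ the Mayer-Vietoris data offers no mechanism to redistribute them, and the induction stalls at the very first step where $w_2^{(j)}-I_k\neq 0$. The argument in \cite{oy4} sidesteps this entirely by stabilising to $M_{Nk}(A)$: after discretising the homotopy and decomposing each increment as you do, one takes $w_1$ and $w_2$ to be essentially block-diagonal (blocks $e^{a_j}$, respectively $e^{b_j}$) combined with scalar permutation matrices coming from a telescoping identity for $\diag(u,I_{(N-1)k})$. Membership $w_i-I\in M_{Nk}(A_{\De_i})$ then holds block by block, the permutations have zero $\E$-propagation, and no product of an $A_{\De_1}$-element with an $A_{\De_2}$-element is ever formed.
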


\subsection{Applications to  coercive decompositions of groupoids}
In this subsection, we show that coercive decompositions of groupoids give rise to 
controlled Mayer-Vietoris pairs.

\smallbreak

      For any $\G$-order $\r$ and any open subset $V$ of the unit space of $\G$, we define 
  $A\rtr\r_V$ as the closure of set of element $h$ in $C_c(\G,A)$ with support in $\r_V$. 
   \begin{lemma}\label{lemma-coercive-decomp}Let $\G$ be a locally compact groupoid  with unit space $X$  provided with a Haar system and let $A$ be a $\G$-algebra. 
  Let $V_1$ and $V_2$ be open subsets of $X$ with $X=V_1\cup V_2$ and such that there exists a partition of the unit subordinated to $(V_1,V_2)$. Then  $(A\rtr \r_{V_1},A\rtr \r_{V_2})$ is for any $\G$-order  a coercive $\r$-decomposition pair for  $A\rtr\G$ with coercivity $1$.
  \end{lemma}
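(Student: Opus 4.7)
My plan is to build the decomposition $x=x_1+x_2$ by hand, using the multiplication operators $\Lambda^s_\phi$ of Lemma \ref{lemma-scalar} cut off by a partition of unity. By the hypothesis on $(V_1,V_2)$, choose continuous functions $\chi_1,\chi_2:X\to[0,1]$ subordinated to $(V_1,V_2)$, with $\supp\chi_i\subseteq V_i$ and $\chi_1+\chi_2=1$ on $X$.

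For each $i$, Lemma \ref{lemma-scalar} provides a bounded operator $\Lambda^s_{\chi_i}:A\rtr\G\to A\rtr\G$ of operator norm at most $\sup_X|\chi_i|\lq 1$, and this operator extends entrywise to $M_n(A\rtr\G)$ with the same norm bound. Since $\chi\mapsto\Lambda^s_\chi$ is linear in $\chi$, the identity $\chi_1+\chi_2=1$ gives $\Lambda^s_{\chi_1}+\Lambda^s_{\chi_2}=\Lambda^s_1=\operatorname{id}_{A\rtr\G}$ (using that $\Lambda^s_1(h)=h$ on $C_c(X;\G,r^*A)$). Hence, for any $\G$-order $\r'$ with $\r'\subseteq\r$ and any $x$ in $M_n(A\rtr\r')$, the elements $x_i\defi\Lambda^s_{\chi_i}(x)$ satisfy $x=x_1+x_2$ together with $\|x_i\|\lq\|x\|$, which will yield the required coercivity $1$.

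It remains to check that each $x_i$ belongs to $M_n((A\rtr\r_{V_i})\cap(A\rtr\r'))$. I would approximate $x$ in norm by matrices $h$ in $M_n(C_c(X;\G,r^*A))$ with support contained in $\r'$. Entrywise the formula reads $\Lambda^s_{\chi_i}(h)(\ga)=h(\ga)\chi_i(s(\ga))$, so the support of $\Lambda^s_{\chi_i}(h)$ lies in $\r'\cap s^{-1}(\supp\chi_i)\subseteq\r'\cap s^{-1}(V_i)=\r'_{V_i}\subseteq\r_{V_i}\cap\r'$. Passing to the norm limit, $x_i$ lies in the closure $A\rtr(\r_{V_i}\cap\r')$, which is contained in $(A\rtr\r_{V_i})\cap(A\rtr\r')$. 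This is all that is required for a complete coercive decomposition pair of order $\r$ with coercivity $1$. There is no substantive obstacle: the whole argument rides on the fact that the cut-off operators $\Lambda^s_{\chi_i}$ are linear in the scalar function and never enlarge the $\G$-order support of an element.
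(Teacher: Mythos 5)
Your proof is correct and follows essentially the same route as the paper: cut off with the operators $\Lambda^s_{\phi_i}$ of Lemma \ref{lemma-scalar} attached to the partition of unity, so that $x=x_1+x_2$, $\|x_i\|\lq\|x\|$, and the approximating elements of $C_c(X;\G,r^*A)$ have support in $\r'_{V_i}\subseteq\r_{V_i}$. By treating matrices and suborders $\r'\subseteq\r$ in one stroke you in fact obtain directly the complete coercive decomposition that the paper records separately in Corollary \ref{corollary-complete-coercive-decomposition}.
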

  \begin{proof}
 
  Let $(\phi_1,\phi_2)$ be a partition of the unit for $X$  subordinated to $(V_1,V_2)$. Let us consider for $i=1,2$ the bounded operator
  $$\Lambda^s_{\phi_i}:A\rtr\G\to A\rtr\G$$ of Lemma \ref{lemma-scalar} and for any $x$ in $A\rtr\r$, let us set $x_i=\Lambda^s_{\phi_i}(x)$. According to    Lemma \ref{lemma-scalar},  we have
  $x=x_1+x_2$, $\|x_i\|\lq 1$ and $x_i$ lies in $A\rtr \r_{V_i}$  for $i=1,2$.\end{proof}
  Notice that  if $\r'$ is a $\G$-order with $\r\subseteq\r'$ and $V$ is an open subset of the unit space of $\G$, then $A\rtr\r_V\subseteq A\rtr\r'_V$.

 \begin{lemma}Let $\G$ be a locally compact groupoid  with unit space $X$  provided with a Haar system. Let $\r$ and $\r'$ be $\G$-orders such that $\r'\subseteq \r$ and let $V$ be an open subset of $X$.
 Then $A\rtr\r'_V=A\rtr\r_V\cap A\rtr\r'$.
 \end{lemma}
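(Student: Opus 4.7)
The plan is to prove the two inclusions separately. The inclusion $A\rtr\r'_V \subseteq A\rtr\r_V \cap A\rtr\r'$ is immediate: since $\r' \subseteq \r$, we have $\r'_V = \r' \cap s^{-1}(V) \subseteq \r_V$ and $\r'_V \subseteq \r'$, so any function in $C_c(X;\G,r^*A)$ with support in $\r'_V$ has support in $\r_V$ and in $\r'$, and passing to closures gives the desired containment.

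For the nontrivial inclusion, fix $x \in A\rtr\r_V \cap A\rtr\r'$. I would exploit both memberships simultaneously by using the operators $\Lambda^s_\phi$ of Lemma \ref{lemma-scalar}. First, using that $x\in A\rtr\r_V$, choose a sequence $h_n \in C_c(X;\G,r^*A)$ with $\supp h_n \subseteq \r_V$ and $h_n \to x$. In particular $K_n := s(\supp h_n)$ is a compact subset of the open set $V$, so by Urysohn's lemma there exists a continuous function $\phi_n: X \to [0,1]$ with $\supp \phi_n \subseteq V$ and $\phi_n \equiv 1$ on $K_n$. Then $h_n \cdot (\phi_n\circ s) = h_n$, that is, $\Lambda^s_{\phi_n}(h_n) = h_n$.

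The operator $\Lambda^s_{\phi_n}$ does two useful things at once. On the one hand, Lemma \ref{lemma-scalar} gives $\|\Lambda^s_{\phi_n}\| \leq 1$ and $\Lambda^s_{\phi_n}(h_n) = h_n$, whence
$$\|x - \Lambda^s_{\phi_n}(x)\| \leq \|x - h_n\| + \|\Lambda^s_{\phi_n}(h_n - x)\| \leq 2\|x - h_n\| \longrightarrow 0.$$
On the other hand, for any $g \in C_c(X;\G,r^*A)$ with $\supp g \subseteq \r'$, the formula $\Lambda^s_{\phi_n}(g) = g\cdot(\phi_n\circ s)$ shows $\supp \Lambda^s_{\phi_n}(g) \subseteq \r' \cap s^{-1}(V) = \r'_V$, so $\Lambda^s_{\phi_n}(g) \in A\rtr\r'_V$. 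Since such $g$ are dense in $A\rtr\r'$ and $\Lambda^s_{\phi_n}$ is bounded, $\Lambda^s_{\phi_n}(A\rtr\r') \subseteq A\rtr\r'_V$; in particular $\Lambda^s_{\phi_n}(x) \in A\rtr\r'_V$.

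Since $A\rtr\r'_V$ is closed and $\Lambda^s_{\phi_n}(x) \to x$, we conclude $x \in A\rtr\r'_V$. The only point requiring care is that $V$ is merely open, not closed, so the cutoff function $\phi_n$ must be chosen $n$-by-$n$ depending on $\supp h_n$; the locally compact Hausdorff hypothesis on $X$ is precisely what makes this Urysohn step go through. The role of the second hypothesis $x \in A\rtr\r'$ is entirely to ensure that the operators $\Lambda^s_{\phi_n}$, which by construction kill propagation outside $s^{-1}(V)$, also land inside $A\rtr\r'$-propagation; combining the two gives propagation inside $\r'_V$.
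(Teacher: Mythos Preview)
Your proof is correct and follows essentially the same route as the paper: both use the sequence $(h_n)$ supported in $\r_V$ to produce cutoff functions $\phi_n$ and then exploit the operator $\Lambda^s_{\phi_n}$ of Lemma \ref{lemma-scalar}. The only cosmetic difference is that the paper picks a second approximating sequence $(h'_n)$ supported in $\r'$ and shows $\Lambda^s_{\phi_n}(h'_n)\to x$, whereas you apply $\Lambda^s_{\phi_n}$ directly to $x$ and invoke boundedness to pass to the closure---these are equivalent.
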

 \begin{proof}
 We clearly have $A\rtr\r'_V\subseteq A\rtr\r_V\cap A\rtr\r'$. Conversely, let $x$ be an element in $A\rtr\r_V\cap A\rtr\r'$.
 Then there exist two sequences $(h_n)_{n\in\N}$ and $(h'_n)_{n\in\N}$ in $C_c(X;\G,r^*A)$ with support respectively in $\r_V$ and in $\r'$ converging to $x$.
 Let us set $K_n=s(\supp h_n)$ for any integer $n$ and let $\phi_n:X\to [0,1]$ be a continuous  compactly supported in $V$ and such that $\phi_n(x)=1$ for any $x$ in $K_n$. According to Lemma \ref{lemma-scalar}, we see that $$(h_n-h'_n \cdot \phi_n\circ s)_{n\in\N}=(\Lambda^s_{\phi_n}(h_n-h'_n))_{n\in\N}$$ converges to zero in $A\rtr\G$ and  hence $(h'_n \cdot \phi_n\circ s)_{n\in\N}$ is a sequence  of elements in $C_c(X;\G,r^*A)$ with support in $\r'_V$ converging to $x$.
 \end{proof}

  
 As a consequence we obtain :  
 \begin{corollary}\label{corollary-complete-coercive-decomposition}
 Under the assumption of Lemma  \ref{lemma-coercive-decomp}, then  $(A\rtr \r_{V_1},A\rtr \r_{V_2})$ is for any $\G$-order $\r$ a complete 
 coercive $\r$-decomposition pair for  $A\rtr\G$ with coercivity $1$.
  \end{corollary}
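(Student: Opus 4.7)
The plan is to upgrade the proof of Lemma \ref{lemma-coercive-decomp} to matrices and to arbitrary smaller $\G$-orders. The only new inputs needed beyond what was already used in Lemma \ref{lemma-coercive-decomp} are (a) the fact that the operator $\Lambda^s_\phi$ of Lemma \ref{lemma-scalar} extends entrywise to matrix algebras with the same operator norm bound, and (b) the identification $A\rtr\r'_V=A\rtr\r_V\cap A\rtr\r'$ from the lemma preceding the corollary, which lets us realize the intersection $\De_i\cap A_{E'}$ concretely.

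More precisely, fix a partition of unity $(\phi_1,\phi_2)$ of $X$ subordinated to $(V_1,V_2)$. For every integer $n$, I would extend $\Lambda^s_{\phi_i}$ entrywise to a bounded operator
$$\Lambda^s_{\phi_i,n}:M_n(A\rtr\G)\lto M_n(A\rtr\G)$$
of norm at most $\sup_{x\in X}|\phi_i(x)|\lq 1$; this is immediate from the construction in Lemma \ref{lemma-scalar}, since $\Lambda^s_{\phi_i}$ is given by right multiplication by $T_{\phi_i}\otimes_{C_0(X)}\Id_A$ on $\L(L^2(\G,A))$, so tensoring with $\Id_{M_n(\C)}$ preserves the norm. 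Given any $\G$-order $\r'$ with $\r'\subseteq\r$ and any $x$ in $M_n(A\rtr\r')$, set $x_i=\Lambda^s_{\phi_i,n}(x)$. Then $\|x_i\|\lq\|x\|$ and $x_1+x_2=x$ because $\phi_1+\phi_2=1$ on $X$. It remains to check that $x_i$ belongs to $M_n(A\rtr\r'_{V_i})$.

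For this last point, observe that on any matrix whose entries are elements of $C_c(X;\G,r^*A)$ with support in $\r'$, the operator $\Lambda^s_{\phi_i,n}$ acts by pointwise multiplication by $\phi_i\circ s$, which cuts the support to $\r'\cap s^{-1}(V_i)=\r'_{V_i}$. By density of such matrices in $M_n(A\rtr\r')$ together with continuity of $\Lambda^s_{\phi_i,n}$, we conclude that $x_i\in M_n(A\rtr\r'_{V_i})$. Invoking the preceding lemma, we have $A\rtr\r'_{V_i}=A\rtr\r_{V_i}\cap A\rtr\r'$, so $x_i$ lies in $M_n((A\rtr\r_{V_i})\cap (A\rtr\r'))$, which is exactly $M_n(\De_i\cap A_{\r'})$ in the notation of Definition \ref{def-CIA}. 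This gives a complete coercive $\r$-decomposition with coercivity $c=1$.

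No genuine obstacle arises: the argument is a direct matrix enhancement of Lemma \ref{lemma-coercive-decomp} combined with the ideal intersection formula. The only subtle point worth spelling out is the norm control when passing to matrix algebras, which is why one prefers to see $\Lambda^s_{\phi_i}$ as coming from an adjointable operator on the Hilbert $A$-module $L^2(\G,A)$: in that form, passage to $M_n$ via tensoring with $\Id_{\C^n}$ gives the bound $\|\Lambda^s_{\phi_i,n}\|\lq\sup_{x\in X}|\phi_i(x)|$ for free.
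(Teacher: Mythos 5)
Your proof is correct and follows essentially the same route as the paper: the paper simply packages your entrywise extension of $\Lambda^s_{\phi_i}$ by replacing $A$ with $A\ts M_n(\C)$ and invoking Lemma \ref{lemma-coercive-decomp} for that $\G$-algebra, together with the identification $(A\rtr\r')\ts M_n(\C)\cap(A\rtr\r_{V_i})\ts M_n(\C)=(A\ts M_n(\C))\rtr\r'_{V_i}$, which is exactly the matrix form of the intersection lemma you use.
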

  \begin{proof} Since for every integer $n$ and for $\G$-order $\r'$ such that $\r'\subseteq\r$, we have  
  $$(A\rtr\r' )\ts M_n(\C)\bigcap (A\rtr \r_{V_i})\ts M_n(\C)= (A\ts M_n(\C))\rtr \r'_{V_i}$$ for $i=1,2$, the result is a consequence of Lemma 
  \ref{lemma-coercive-decomp}.
   \end{proof}

%
     \begin{lemma}\label{lemma-quasi-expectation}Let $\G$ be a locally compact groupoid  provided with a Haar system and let $\H$ be a relatively clopen subgroupoid of $\G$ with unit space $Y$. Then for any compactly supported continuous function $\phi : Y\to \C$ and for any $\G$-algebra $A$, there exists
     a positive continuous linear map $\Upsilon_\phi:A\rtr\G\to A\rtr\H$ such that
     \begin{enumerate}
     \item $\Upsilon_\phi(f)=\phi\circ r\cdot f_{/\H}\circ \bar{\phi}\circ s,$  for any $f$ in $C_c(X;\G,r^*A)$, where  $f_{/\H}:\H\to \C$ is the restriction of $f$ to $\H$;
  \item    $\Upsilon_\phi$ is bounded in norm by $\sup_{y\in Y}|\phi(y)|^2$.
  
  \item  $\Upsilon_\phi$ maps $A\rtr\r$ to  $A\rtr\r_{/\H}$ for any $\G$-order $\r$.
  
  \end{enumerate}
  
         \end{lemma}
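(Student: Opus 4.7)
My plan is to construct $\Upsilon_\phi$ as the compression of the positive bounded operator $\Lambda^r_\phi\circ\Lambda^s_{\bar\phi}$ on $A\rtr\G$ (from the remark after Lemma~\ref{lemma-scalar}) by an isometric embedding of $L^2(\H,A_{/Y})$ into $L^2(\G,A)$, then deduce the three properties essentially by density.

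First I would define $\Upsilon_\phi(f)$ on $f\in C_c(X;\G,r^*A)$ by the formula in (i). The function $g=\phi\circ r\cdot f\cdot\bar\phi\circ s$ is continuous on $\G$ with compact support contained in $\supp f\cap\G_{\supp\phi}^{\supp\phi}\subseteq\G_Y^Y$. By Remark~\ref{remark-relatively-clopen}, $\H$ is clopen in $\G_Y^Y$, so $g\cdot\mathbf{1}_\H$ is continuous on $\G_Y^Y$; its support equals $\supp g\cap\H$, which is compact by Lemma~\ref{lemma-locally-clopen} and contained in the open subset $\H\subseteq\G$. A short case analysis (using that $\H\subseteq\G_Y^Y$ and that $\G_Y^Y\setminus\H$ is relatively open) then shows the extension by zero is continuous on all of $\G$, so $\Upsilon_\phi(f)$ indeed lies in $C_c(Y;\H,r^*A_{/Y})$.

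The crucial step is to identify $\Upsilon_\phi$ with a compression. Let $W\colon L^2(\H,A_{/Y})\to L^2(\G,A)$ be the isometric embedding induced by extending $C_c(\H,s^*(A_{/Y}))$ by zero into $C_c(\G,s^*A)$; this is well-defined and isometric since $\H$ is open in $\G$ and the Haar system of $\H$ is the restriction of that of $\G$. Let $T_\phi\colon L^2(\G,A)\to L^2(\G,A)$ denote the multiplication operator by $\phi\circ r$ appearing in the remark after Lemma~\ref{lemma-scalar}, so that $\|T_\phi\|\leq\sup_{y\in Y}|\phi(y)|$ and $T_\phi^*=T_{\bar\phi}$. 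A direct convolution calculation, exploiting the fact that for $\xi$ supported in $\H$ and $\ga\in\H$ the integration variable $\ga'$ with ${\ga'}^{-1}\ga\in\H$ must lie in $\H^{r(\ga)}$, yields
\[\Upsilon_\phi(f)=W^*\,T_\phi\,f\,T_{\bar\phi}\,W\]
as operators on $L^2(\H,A_{/Y})$, for every $f\in C_c(X;\G,r^*A)$.

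From this factorization, (ii) is immediate: $\|\Upsilon_\phi(f)\|\leq\|W\|^2\|T_\phi\|\|T_{\bar\phi}\|\|f\|\leq(\sup|\phi|)^2\|f\|$. Density of $C_c(X;\G,r^*A)$ in $A\rtr\G$ and norm-closedness of $A\rtr\H$ then yield the unique extension $\Upsilon_\phi\colon A\rtr\G\to A\rtr\H$. Positivity follows from $\Upsilon_\phi(f^*f)=W^*T_\phi f^*fT_{\bar\phi}W=(fT_{\bar\phi}W)^*(fT_{\bar\phi}W)\geq 0$. For (iii), given $f\in A\rtr\r$, I would approximate by $f_n\in C_c(X;\G,r^*A)$ with $\supp f_n\subseteq\r$; then $\supp\Upsilon_\phi(f_n)\subseteq\r\cap\H=\r_{/\H}$, so $\Upsilon_\phi(f_n)\in A\rtr\r_{/\H}$, and closedness of $A\rtr\r_{/\H}$ in $A\rtr\H$ gives $\Upsilon_\phi(f)\in A\rtr\r_{/\H}$. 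The main technical hurdle is the continuity of $g\cdot\mathbf{1}_\H$ on $\G$, which genuinely uses relative clopenness of $\H$ in $\G$; the convolution identification in the third paragraph is careful but routine bookkeeping with the Haar system restriction.
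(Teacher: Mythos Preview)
Your proposal is correct and follows essentially the same approach as the paper. The only difference is packaging: the paper absorbs the projection onto $L^2(\H,A)$ into the multiplication operator by using the single function $\psi=\phi\circ r\cdot\mathbf{1}_\H$ on $\G$ (whose continuity is exactly the clopenness of $\H$ in $\G^Y$) and defines $\Upsilon_\phi(x)=T_{\bar\psi}\,x\,T_\psi$, whereas you separate the two effects into $T_\phi$ (multiplication by $\phi\circ r$, extended by zero to $X$) and the compression $W^*(\cdot)W$; since $T_\psi W=T_\phi W$ on $L^2(\H,A)$, the two constructions coincide.
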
  
    \begin{proof}Let us denote by $\la=(\la^x)_{x\in X}$  the Haar system for $\G$. Then the restriction of $\la$ to $\H$ is a Haar system for $\H$ that we shall denote by $\la_{/\H}=(\la_{/\H}^y)_{y\in Y}$.
   Using the inclusion $C_c(\H)\hookrightarrow C_c(\G)$, we see that $L^2(\H)$ can be viewed as a $C_0(X)$-Hilbert submodule of $\L^2(\G)$ and therefore  $L^2(\H,A)$ is a right $A$-Hilbert submodule of $L^2(\G,A)$. Since $\H$ is clopen in $\G^Y$, we get that $\phi\circ r:\H\to\C$ extends to a continuous function
   $\psi:\G\to\C$ defined by $\psi(\ga)=\phi\circ r(\ga)$ if $\ga$ is in $\H$ and $\psi(\ga)=0$ else. We have $\supp \psi\subseteq \H$ and
   $|\psi(\ga)|\lq M$ for any $\ga$ in $\H$ with $M=\sup_{y\in Y}|\phi(y)|$.
   Define $T_\psi:L^2(\G)\to L^2(\G)$ as the unique bounded operator extending the map $$C_c(\G)\to C_c(\G);\xi\mapsto \psi\xi.$$ Then $T_\psi$ has operator norm bounded by $M$ and $\Im T_\psi\subseteq L^2(\H)$. In consequence $T_\psi\ts 1$ maps $L^2(\G,A)$ to $L^2(\H,A)$.
   Consider the map
   \begin{eqnarray*}
   \Upsilon_\phi:A\rtr\G&\lto&\mathcal{L}(L^2(\H,A))\\
   x&\mapsto&T_{\bar\psi}\cdot x\cdot T_\psi
   \end{eqnarray*}
   Since $T_\psi^*=T_{\bar\psi}$, we deduce that $\Upsilon_\phi$ is a positive operator with norm bounded by $M^2$.
   Moreover, for any $f$ in $C_c(X;\G,r^*A)$, any $\xi$ in $C_c(Y;\H,s^*A_{/Y})$ and any $\ga$ in $\H$, we have
   \begin{eqnarray*}
   (\Upsilon_\phi(f)\cdot\xi )(\ga)&=&\bar{\psi}(\ga)\int_{\G^{r(\ga)}}\ga^{-1}(f(\ga'))\psi(\ga'^{-1}\ga)\xi(\ga'^{-1}\ga)d\la^{r(\ga)}(\ga')\\
   &=&\bar{\psi}(\ga)\int_{\H^{r(\ga)}}\ga^{-1}(f(\ga'))\psi(\ga'^{-1}\ga)\xi(\ga'^{-1}\ga)d\la_{/\H}^{r(\ga)}(\ga')\\
   &=&\bar{\phi}\circ r (\ga)\int_{\H^{r(\ga)}}\ga^{-1}(f(\ga'))\phi\circ s(\ga')\xi(\ga'^{-1}\ga)d\la_{/\H}^{r(\ga)}(\ga')\\
    &=&\int_{\H^{r(\ga)}}\ga^{-1}(g(\ga'))\xi(\ga'^{-1}\ga)d\la_{/\H}^{r(\ga)}(\ga')
   \end{eqnarray*}
   With \begin{equation}\label{equation-support}g(\ga)=\bar{\phi}\circ r (\ga)\cdot \phi\circ s(\ga)\cdot f(\ga)=\bar{\psi}(\ga)\cdot \psi(\ga^{-1})\cdot f(\ga)\end{equation}
    for any $\ga$ in $\H$.
   Moreover $g$ has support in $\supp \psi\cap \supp f \cap  \supp^{-1} \psi$. Since $\supp \psi$ is closed in $\G$ and contained in $\H$, we deduce that $g$ is in $C_c(\H,r^*A)$.
   Hence $\Upsilon_\phi$ maps  $C_c(X;\G,r^*A)$ to $A\rtr\H$ and by continuity maps $A\rtr G$ to $A\rtr\H$. It is then clear that 
   $\Upsilon$ satisfies the required conditions.
   \end{proof}
   \begin{remark}\label{remark-support}
   According to Equation \eqref{equation-support}  and since $\H$ is open in $\G$,  we see  
    that if $f$ is  in $C_c(X;\G,r^*A)$, then $\Upsilon_\phi(f)$ is supported in $\H\cap \supp f$.
   \end{remark}
    \begin{corollary}\label{corollary-order-subgroupoid}Let $\H$ be a relatively clopen subgroupoid of a locally compact groupoid $\G$, let $\r$  be a $\G$-order and let $V$ be an open subset of $X$.
 Then we have $$(A\rtr\H)\cap (A\rtr \r)=A\rtr \r_{/\H}$$ for any $\G$-algebra $A$.
 \end{corollary}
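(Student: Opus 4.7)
The inclusion $A\rtr\r_{/\H}\subseteq (A\rtr\H)\cap(A\rtr\r)$ is immediate from the definitions: since $\r_{/\H}=\r\cap\H$, any element of $C_c(Y;\H,r^*A_{/Y})$ supported in $\r_{/\H}$ lies simultaneously in $A\rtr\H$ and in $A\rtr\r$, and both of these are closed. So the content of the corollary is the reverse inclusion, and my plan is to produce it using the quasi-expectation $\Upsilon_\phi$ of Lemma \ref{lemma-quasi-expectation}.

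Given $x\in (A\rtr\H)\cap (A\rtr\r)$ and any compactly supported continuous $\phi:Y\to\C$, part (iii) of Lemma \ref{lemma-quasi-expectation} gives $\Upsilon_\phi(x)\in A\rtr\r_{/\H}$. Since $A\rtr\r_{/\H}$ is closed, it suffices to exhibit a net $(\phi_i)$ such that $\Upsilon_{\phi_i}(x)\to x$ in norm. I will use a net of compactly supported continuous functions $\phi_i:Y\to[0,1]$ which eventually equals $1$ on every compact subset of $Y$; by part (ii) of Lemma \ref{lemma-quasi-expectation} each $\Upsilon_{\phi_i}$ is then a contraction.

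The key convergence statement is: for every $x\in A\rtr\H$ and every $\eps>0$, there exists an index $i$ such that $\|\Upsilon_{\phi_i}(x)-x\|<\eps$. To prove this, first approximate $x$ by some $f\in C_c(Y;\H,r^*A_{/Y})$ with $\|x-f\|<\eps/2$. The set $L\defi r(\supp f)\cup s(\supp f)$ is a compact subset of $Y$, so eventually $\phi_i=1$ on $L$; for such $i$ the explicit formula of Lemma \ref{lemma-quasi-expectation}(i) gives $\Upsilon_{\phi_i}(f)=f$ on $\H$. Combined with $\|\Upsilon_{\phi_i}\|\lq 1$, the triangle inequality
\[
\|\Upsilon_{\phi_i}(x)-x\|\lq \|\Upsilon_{\phi_i}(x-f)\|+\|f-x\|\lq 2\|x-f\|<\eps
\]
finishes the convergence argument. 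Applying this to $x\in(A\rtr\H)\cap(A\rtr\r)$ places $x$ in the closure of $A\rtr\r_{/\H}$, which is $A\rtr\r_{/\H}$ itself.

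The only subtle point is verifying $\Upsilon_{\phi_i}(f)=f$ from the formula in Lemma \ref{lemma-quasi-expectation}(i), keeping track of the identification of $f\in C_c(Y;\H,r^*A_{/Y})$ with an element of $C_c(X;\G,r^*A)$ supported in $\H$; this uses that $\H$ is open in $\G$ and that $r(\ga), s(\ga)\in Y$ for $\ga\in\H$, so both factors $\phi_i\circ r$ and $\bar\phi_i\circ s$ act as $1$ on $\supp f$. The rest is routine estimation, and nothing more than the boundedness and approximation properties encoded in Lemma \ref{lemma-quasi-expectation}(ii)--(iii) is needed.
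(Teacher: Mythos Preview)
Your proof is correct and follows essentially the same approach as the paper: both arguments use the quasi-expectation $\Upsilon_\phi$ of Lemma~\ref{lemma-quasi-expectation}, choosing $\phi$ equal to $1$ on the source and range of the support of a compactly supported approximant in $A\rtr\H$ so that $\Upsilon_\phi$ fixes it, and then concluding via the contractivity of $\Upsilon_\phi$. The only cosmetic difference is that you apply $\Upsilon_{\phi_i}$ directly to $x$ and invoke part~(iii) of the lemma, whereas the paper applies $\Upsilon_{\phi_n}$ to an approximant $h'_n$ supported in $\r$ and uses Remark~\ref{remark-support} to control its support.
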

\begin{proof}
 We clearly have $A\rtr \r_{/\H}\subseteq  (A\rtr\H)\cap (A\rtr \r)$. Conversely, let $x$ be an element in $(A\rtr\H)\cap (A\rtr \r)$.
 Then there exist two sequences $(h_n)_{n\in\N}$ and $(h'_n)_{n\in\N}$ in $C_c(X;\G,r^*A)$ with support respectively in $\H$ and in $\r$ converging to $x$.
 Let us set $K_n=s(\supp h_n)\cup r(\supp h_n) $ for any integer $n$ and let $\phi_n:X\to [0,1]$ be a continuous  function compactly supported in the unit space of $\H$ and such that $\phi_n(x)=1$ for any $x$ in $K_n$. According to Lemma \ref{lemma-quasi-expectation}, we see that $$(h_n-\Upsilon_{\phi_n}(h'_n) )_{n\in\N}=(\Upsilon_{\phi_n}(h_n-h'_n))_{n\in\N}$$ converges to zero in $A\rtr\G$. In view of Remark \ref{remark-support}, we deduce that $\Upsilon_{\phi_n}(h'_n)$ has compact support in $\r\cap\H=\r_{/\H}$, and thus 
$( \Upsilon_{\phi_n}(h'_n))_{n\in\N}$ is a sequence in $C_c(X;\G,r^*A)$ with support in $\r_{/\H}$ converging to $x$ and hence
$x$ belongs to $A\rtr \r_{/\H}$.
 \end{proof}
   \begin{corollary}\label{corollary-intersection}Let $\G$ be a locally compact groupoid  with unit space $X$  provided with a Haar system. Let $\H_1$ and $\H_2$ be relatively  clopen  subgroupoids of $\G$. Then the following holds :
   \begin{enumerate}
   \item $A\rtr (\H_1\cap\H_2)=(A\rtr\H_1)\cap(A\rtr \H_2)$;
     \item $A\rtr \r_{/\H_1\cap\H_2}=(A\rtr\r_{/\H_1})\cap(A\rtr \r_{/\H_2})$ for any $\G$-order $\r$.
   \end{enumerate}
  
 \end{corollary}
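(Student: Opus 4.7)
The plan is to prove (i) by a direct approximation argument modelled on the proof of Corollary \ref{corollary-order-subgroupoid}, using the averaging operators $\Upsilon_\phi$ of Lemma \ref{lemma-quasi-expectation}, and then to deduce (ii) from (i) together with Corollary \ref{corollary-order-subgroupoid}.

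As a preliminary step I would verify that $\H_1\cap\H_2$ is itself a relatively clopen subgroupoid of $\G$, so that the left-hand sides make sense. Denoting the unit space of $\H_i$ by $Y_i$, the unit space of $\H_1\cap\H_2$ is the open set $Y_1\cap Y_2$, and $\H_1\cap\H_2\subseteq \G_{Y_1\cap Y_2}$; writing $\H_1\cap\H_2=(\H_1\cap \G_{Y_1\cap Y_2})\cap(\H_2\cap \G_{Y_1\cap Y_2})$ exhibits it as an intersection of two closed subsets of $\G_{Y_1\cap Y_2}$. The containment $A\rtr(\H_1\cap\H_2)\subseteq (A\rtr\H_1)\cap (A\rtr\H_2)$ is then obvious.

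For the reverse inclusion in (i), fix $x\in (A\rtr\H_1)\cap (A\rtr\H_2)$ and $\eps>0$. I would first pick $h\in C_c(X;\G,r^*A)$ with $\supp h\subseteq \H_1$ and $\|x-h\|<\eps/4$, and choose a real-valued $\phi\in C_c(Y_1)$ with $0\lq \phi\lq 1$ and $\phi\equiv 1$ on the compact set $s(\supp h)\cup r(\supp h)\subseteq Y_1$. Lemma \ref{lemma-quasi-expectation} then provides an operator $\Upsilon_\phi^{\H_1}\colon A\rtr\G\to A\rtr\H_1$ of norm at most $1$ which, by its explicit formula, fixes $h$. Separately, approximate $x$ by a sequence $(g_m)$ in $C_c(X;\G,r^*A)$ with $\supp g_m\subseteq \H_2$; Remark \ref{remark-support} gives $\supp \Upsilon_\phi^{\H_1}(g_m)\subseteq \H_1\cap \supp g_m\subseteq \H_1\cap\H_2$, so each $\Upsilon_\phi^{\H_1}(g_m)\in A\rtr(\H_1\cap\H_2)$, and by continuity $\Upsilon_\phi^{\H_1}(x)$ lies in the closed set $A\rtr(\H_1\cap\H_2)$. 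The triangle inequality then yields
$$\|\Upsilon_\phi^{\H_1}(x)-x\|\lq \|\Upsilon_\phi^{\H_1}\|\cdot \|x-h\|+\|\Upsilon_\phi^{\H_1}(h)-h\|+\|h-x\|\lq 2\|x-h\|<\eps/2,$$
so $x\in A\rtr(\H_1\cap\H_2)$.

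Part (ii) is then a formal consequence: Corollary \ref{corollary-order-subgroupoid} applied to $\H_1$, $\H_2$ and to the relatively clopen subgroupoid $\H_1\cap\H_2$ gives
$$(A\rtr\r_{/\H_1})\cap (A\rtr\r_{/\H_2})=(A\rtr\H_1)\cap (A\rtr\H_2)\cap (A\rtr\r),$$
and using (i) this equals $A\rtr(\H_1\cap\H_2)\cap (A\rtr\r)=A\rtr\r_{/\H_1\cap\H_2}$. The principal obstacle is really in (i): one must coordinate two different approximations of $x$, using the $\H_2$-approximation to force $\Upsilon_\phi^{\H_1}(x)$ into $A\rtr(\H_1\cap\H_2)$ (via the support control of Remark \ref{remark-support}) while simultaneously using the $\H_1$-approximation to force $\Upsilon_\phi^{\H_1}(x)$ to be norm-close to $x$, which is what makes the bound $\|\Upsilon_\phi^{\H_1}\|\lq 1$ essential.
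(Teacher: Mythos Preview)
Your proof is correct and follows essentially the same approach as the paper: both arguments use the operator $\Upsilon_\phi$ of Lemma~\ref{lemma-quasi-expectation} together with Remark~\ref{remark-support}, exploiting an $\H_1$-approximation of $x$ to make $\Upsilon_\phi$ act nearly as the identity and an $\H_2$-approximation to force $\Upsilon_\phi(x)$ into $A\rtr(\H_1\cap\H_2)$, and both deduce (ii) from (i) via Corollary~\ref{corollary-order-subgroupoid}. The only cosmetic differences are that you phrase the argument with a fixed $\eps$ and a single $\phi$ rather than sequences $(\phi_n)$, and that you add the explicit check that $\H_1\cap\H_2$ is relatively clopen.
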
  
 \begin{proof}
 Let us prove the first point. We clearly have $$A\rtr (\H_1\cap\H_2)\subseteq(A\rtr{\H_1})\cap(A\rtr \H_2).$$  Conversely, let $x$ be an element in $(A\rtr{\H_1})\cap(A\rtr \H_2)$.
 Then there exist two sequences $(h_n)_{n\in\N}$ and $(h'_n)_{n\in\N}$   in $C_c(X;\G,r^*A)$ with support respectively in $\H_1$ and  
 $\H_2$ converging to $x$.
 Let us set $$K_n=s(\supp h_n)\cup r(\supp h_n)$$ for any integer $n$ and let $\phi_n:X\to [0,1]$ be a continuous function  compactly supported in the unit space of $\H_1$  and such that $\phi_n(x)=1$ for any $x$ in $K_n$. According to Lemma \ref{lemma-quasi-expectation}, we see that $$(h_n-\Upsilon_{\phi_n}(h'_n) )_{n\in\N}=(\Upsilon_{\phi_n}(h_n-h'_n))_{n\in\N}$$ converges to zero in $A\rtr\G$.  
  In view of Remark \ref{remark-support}, we deduce that  $\Upsilon_{\phi_n}(h'_n)$ has compact support in $\H_1\cap\H_2$, and thus  that
$( \Upsilon_{\phi_n}(h'_n))_{n\in\N}$ is a sequence in $C_c(X;\G,r^*A)$ with support in $\H_1\cap H_2$ converging to $x$ and hence
$x$ belongs to $A\rtr (\H_1\cap\H_2)$. To prove the second point, let us observe that according to Corollary \ref{corollary-order-subgroupoid} we have  $$A\rtr \r_{/\H_1\cap\H_2}=A\rtr \r\cap A\rtr(\H_1\cap\H_2).$$ The result is then a consequence of the first point.\end{proof}
  \begin{remark}
  The proof of the first point only requires $\H_1$ to  be relatively clopen.
  \end{remark}
  
    \begin{theorem}\label{theorem-mvpair}
  Let $\G$ be a locally compact groupoid  provided with a Haar system, let $A$ be a $\G$-algebra  and  let $\r$ and $\r'$ be  $\G$-orders such that $\r^{\ast 6 }\subseteq\r'$. Assume that   $(V_1,V_2,\H_1,\H_2)$ is  a coercive $\r'$-decomposition for $\G$. Then
  $$(A\rtr \r_{V_1},A\rtr \r_{V_2},A\rtr\H_1,A\rtr\H_2)$$ is a coercive Mayer-Vietoris pair of order $\r$  with coercivity $2$. 
  \end{theorem}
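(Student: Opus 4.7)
The plan is to verify each of the three conditions of Definition \ref{definition-coarse-MV-pair} for the quadruple $(A\rtr\r_{V_1},A\rtr\r_{V_2},A\rtr\H_1,A\rtr\H_2)$ of order $\r$, with coercivity bounded by $2$ throughout. The first condition is immediate: Corollary \ref{corollary-complete-coercive-decomposition} asserts precisely that $(A\rtr\r_{V_1},A\rtr\r_{V_2})$ is a complete coercive $\r$-decomposition pair of coercivity $1\lq 2$.

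For the neighborhood condition (ii), the filtration of $A\rtr\H_i$ by $\E_\G$ is the one inherited from $A\rtr\G$, so only the inclusion
\[
A\rtr\r_{V_i}+A\rtr\r^{\ast 5}\cdot A\rtr\r_{V_i}+A\rtr\r_{V_i}\cdot A\rtr\r^{\ast 5}+A\rtr\r^{\ast 5}\cdot A\rtr\r_{V_i}\cdot A\rtr\r^{\ast 5}\subseteq A\rtr\H_i
\]
must be established. I would argue this by a support chase on generators in $C_c(X;\G,r^*A)$, using that each $\r^{\ast n}$ is symmetric. For an arrow $\ga=\ga_1\ga_2\ga_3$ with $\ga_2\in\r_{V_i}$ and $\ga_1,\ga_3\in\r^{\ast 5}$, the partial product $\ga_1\ga_2$ lies in $\r^{\ast 5}\cdot\r\subseteq\r^{\ast 6}\subseteq\r'$ and satisfies $s(\ga_1\ga_2)=s(\ga_2)\in V_i$, so $\ga_1\ga_2\in\r'_{V_i}\subseteq\H_i$ by the $\r'$-decomposition property. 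Symmetrically, $\ga_3^{-1}\in\r^{\ast 5}\subseteq\r'$ has source $r(\ga_3)=s(\ga_2)\in V_i$, placing $\ga_3^{-1}$ in $\r'_{V_i}\subseteq\H_i$ and hence $\ga_3$ in $\H_i$. Since $\H_i$ is a subgroupoid, $\ga=(\ga_1\ga_2)\cdot\ga_3\in\H_i$; the simpler terms are handled by the same reasoning with one or two of the factors absent.

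The main obstacle is the CIA condition (iii), which I would establish with coercivity $2$ using the quasi-conditional expectation $\Upsilon_\phi^{(\H_2)}$ of Lemma \ref{lemma-quasi-expectation}. Given $\eps>0$, a $\G$-order $E'$ and elements $x\in M_n(A\rtr\H_1\cap A\rtr E')$, $y\in M_n(A\rtr\H_2\cap A\rtr E')$ with $\|x-y\|<\eps$, I would approximate $y$ by a sequence $(y_k)$ in $M_n(C_c(X;\G,r^*A))$ with support in $E'\cap\H_2$ (possible by Corollary \ref{corollary-order-subgroupoid}). Letting $Y_2$ denote the unit space of $\H_2$, I then choose continuous functions $\phi_k:Y_2\to[0,1]$ of compact support with $\phi_k\equiv 1$ on the compact set $s(\supp y_k)\cup r(\supp y_k)\subseteq Y_2$, and set $z_k=(\Upsilon_{\phi_k}^{(\H_2)}\otimes\Id_{M_n})(x)$. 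By Remark \ref{remark-support} the support of $z_k$ lies in $\H_2\cap\H_1$, and by Lemma \ref{lemma-quasi-expectation} (iii) the element $z_k$ has propagation in $E'_{/\H_2}\subseteq E'$, so $z_k\in M_n(A\rtr(\H_1\cap\H_2))\cap M_n(A\rtr E')$. Since $\Upsilon_{\phi_k}^{(\H_2)}(y_k)=y_k$ by construction and $\|\Upsilon_{\phi_k}^{(\H_2)}\|\lq 1$, the telescoping estimate
\[
\|z_k-y\|\lq\|\Upsilon_{\phi_k}^{(\H_2)}(x-y_k)\|+\|y_k-y\|\lq\|x-y\|+2\|y-y_k\|
\]
has limit $\|x-y\|<\eps$, so for $k$ large $\|z_k-y\|<\eps$ and $\|z_k-x\|\lq\|z_k-y\|+\|y-x\|<2\eps$, yielding coercivity $2$.

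The genuine difficulty here is that $E'$ is arbitrary and may well exceed $\r'$, so the decomposition hypothesis $\r^{\ast 6}\subseteq\r'$ and the property $\r'_{V_i}\subseteq\H_i$ cannot be invoked directly to place $x$ and $y$ into $\H_1\cap\H_2$ by a support argument. The construction sidesteps this by relying on the two structural features of $\Upsilon_\phi^{(\H_2)}$: it automatically restricts the support of any element of $A\rtr\H_1$ to $\H_1\cap\H_2$ (without enlarging the propagation), and its action on the approximation $y_k$ of $y$ can be made trivial by a suitable choice of $\phi_k$.
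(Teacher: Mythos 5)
Your proposal is correct and follows essentially the same route as the paper: condition (i) via Corollary \ref{corollary-complete-coercive-decomposition}, condition (ii) by the same order-inclusion/support chase using $\r^{\ast 6}\subseteq\r'$, $\r'_{V_i}\subseteq\H_i$ and the subgroupoid property (you merely spell out the inverse trick the paper leaves implicit), and the CIA condition via the quasi-expectation of Lemma \ref{lemma-quasi-expectation} together with Remark \ref{remark-support}, yielding coercivity $2$. The only cosmetic difference is that you apply the expectation relative to $\H_2$ to the element of $A\rtr\H_1$, whereas the paper does the symmetric thing, approximating the $\H_1$-side element and compressing the $\H_2$-side one.
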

  \begin{proof}
 According to Corollary \ref{corollary-complete-coercive-decomposition}, $(A\rtr \r_{V_1},A\rtr \r_{V_2})$ is a complete coercive $\r$-decomposition pair for  $A\rtr\G$ with coercivity $1$. Let us prove that $A\rtr \H_i$ is for $i=1,2$ an $\r$-controlled  $A\rtr \r_{V_i}$-neighborhood-$C^*$-algebra. 
 By Lemma \ref{corollary-order-subgroupoid}, we see that the  $C^*$-algebra  $A\rtr \H_i$ is filtered by $$((A\rtr \H_i)\cap(A\rtr\r))_{\r\in\E_\G}=(A\rtr\r_{/\H_i})_{\r\in\E_\G}.$$
 Since $\r^{*6}\subseteq \r'$,  $\r'_{V_i}\subseteq \H_i$ and $\H_i$ is a subgroupoid of $\G$, we see that
 \begin{itemize}
 \item $\r_{V_i}\subseteq \H_i$;
 \item  $\r^{*5}\cdot \r_{V_i}\subseteq \H_i$;
 \item $\r_{V_i}\cdot \r^{*5} \subseteq \H_i$;
 \item $\r^{*5}\cdot \r_{V_i}\cdot \r^{*5}\subseteq \H_i$
 \end{itemize}
 and hence 
 \begin{itemize}
 \item  $A\rtr \r_{V_i} \subseteq A\rtr \H_i$;
 \item $A\rtr \r_{V_i}\cdot A\rtr \r^{*5} \subseteq A\rtr \H_i$;
 \item  $A\rtr \r^{*5}\cdot A\rtr \r_{V_i}\subseteq A \rtr \H_i$;
 \item $A\rtr \r^{*5}\cdot A\rtr \r_{V_i}\cdot A\rtr \r^{*5}\subseteq A \rtr \H_i$. 
 \end{itemize}
  This proves that $A\rtr \H_i$ is  an $\r$-controlled  $A\rtr \r_{V_i}$-neighborhood-$C^*$-algebra. 
 
 \smallskip
 
  Let us prove that $(A\rtr\H_1,A\rtr\H_2)$ satisfies the CIA property with coercivity $2$. Up to replace $A$ by $A\ts M_n(\C)$, it is enough to show that for every positive number and any $x_1$ in $A\rtr\r_{/\H_1}$ and $x_2$ in $A\rtr\r_{/\H_2}$ such that
 $\|x_1-x_2\|<\eps$ then there exists $z$ in $(A\rtr\r_{/\H_1})\cap (A\rtr\r_{/\H_2})$ such that  $\|z-x_1\|<\eps$. Notice that in view of  Corollary \ref{corollary-intersection}, we have $$(A\rtr\r_{/\H_1})\cap (A\rtr\r_{/\H_2})=A\rtr \r_{/(\H_1\cap\H_2)}.$$ Set $\alpha=\eps-\|x_1-x_2\|$ and let $h$ be  an element in $C_c(X;\G,r^*A)$ with support included in $\r_{/\H_1}$ and such that $\|x_1-h\|<\alpha/2$. Let $\phi:\G\to [0,1]$ be a continuous function compactly supported in the space of unit of $\H_1$ and such that
 $\phi(x)=1$ for all $x$ in $r(\supp h)\cup s(\supp h)$. According to  Lemma \ref{lemma-quasi-expectation}, we see that
 $\Upsilon_\phi(h)=h$, $\Upsilon_\phi(x_2)$ belongs to $A\rtr  \r_{/\H_1}$ and
 \begin{eqnarray*}
 \|x_1-\Upsilon_\phi(x_2)\|&<&\|x_1-h\|+\|h-\Upsilon_\phi(x_2)\|\\
 &<&\alpha/2+|\Upsilon_\phi(h)-\Upsilon_\phi(x_2)\|\\
 &<&\alpha/2+\|h-x_2\|\\
  &<&\alpha/2+\|h-x_1\|+\|x_1-x_2\|\\
  &<& \alpha/2+\alpha/2+\|x_1-x_2\|\\
  &<&\eps.
   \end{eqnarray*}
  But $x_2$ is a limit of elements of  $C_c(X;\G,r^*A)$ with support in $\r_{/\H_2}$ and hence according to Remark \ref{remark-support}, $\Upsilon_\phi(x_2)$  is also a limit of element of  $C_c(\G,r^*A)$ with support in $\r_{/\H_2}$ and therefore  $\Upsilon_\phi(x_2)$ belongs to 
  $A\rtr \r_{/\H_2}$.
  
  \end{proof}
   \begin{remark}
  In view of \cite[Theorem 6.1]{lal},  we can show in the same way that under   assumptions of Lemma  \ref{lemma-coercive-decomp},
   if $A$ is a exact and separable,
  then  $(A\rtr \r_{V_1},A\rtr \r_{V_2},A\rtr\H_1,A\rtr\H_2)$  is  a  
  $\r$-controlled  nuclear Mayer-Vietoris pair in sense of \cite[Definition 4.8]{oy4} for  $A\rtr\G$ with coercivity $2$.
 \end{remark}

\subsection{The Mayer-Vietoris controlled  exact sequence}\label{subsect-mv}
A coercive Mayer-Vietoris   pair gives rise to a controlled six term exact sequence that computes the quantitative $K$-theory up to the order of the  pair and up to rescaling by a control pair. In view of Theorem \ref{theorem-mvpair}, it turns out that this controlled Mayer-Vietoris  six term exact sequence was shaped out
for $K$-theory computations in the setting of coercive decompositions for  groupoids.

\begin{notation}
Let $\E$ be a coarse structure, let   $A$  be a $\E$-filtered  $C^*$-algebra, let $E$ be an element in $\E$ and let    $(\De_1,\De_2,A_{\Delta_1},A_{\Delta_2})$ be a  $E$-controlled  Mayer-Vietoris pair  for $A$. We denote by  
\begin{itemize}
\item $\jmath_{\Delta_1}: A_{\Delta_1}\to A$;
\item $\jmath_{\Delta_2}: A_{\Delta_2}\to A$;
\item $\jmath_{\Delta_1,\Delta_2}: A_{\Delta_1}\cap A_{\Delta_2} \to A_{\Delta_1}$;
\item $\jmath_{\Delta_2,\Delta_1}: A_{\Delta_1}\cap A_{\Delta_2} \to A_{\Delta_2}$
\end{itemize}
 the obvious inclusion maps.
\end{notation}

\begin{proposition}\label{prop-half-exact}For every positive number $c$, there exists a control pair $(\alpha,l)$ such that  for any coarse structure $\E$, any $\E$-filtered  $C^*$-algebra $A$, any   $E$ in $\E$ and   any   $E$-controlled   Mayer-Vietoris pair $$(\De_1,\De_2,A_{\Delta_1},A_{\Delta_2})$$    for $A$
with coercitivity $c$, then  the following holds:

\medbreak

for any $\eps$ in $(0,\frac{1}{4\al})$, for any $E'$ in $\E$ such that $l_\eps \cdot E'\lq E$, for any $y_1$ in $K^{\eps,E'}_*(A_{\Delta_1})$ and any $y_2$ in $K^{\eps,E'}_*(A_{\Delta_2})$
such that $$\jmath^{\eps,E'}_{\Delta_1,*}(y_1)=\jmath^{\eps,E'}_{\Delta_,*}(y_2)$$ in 
$K^{\eps,E'}_*(A)$, then there exists an element $x$ in 
$K^{\al \eps,l_\eps E'}_*(A_{\Delta_1}\cap A_{\Delta_2})$ such 
that
$$\jmath^{\al\eps,l_\eps E'}_{\Delta_1,\Delta_2,*}(x)=\iota_*^{-,\al\eps,l_\eps E'}(y_1)$$ in
$K^{\al \eps,l_\eps E'}_*(A_{\Delta_1})$ and
$$\jmath^{\al\eps,l_\eps E'}_{\Delta_2,\Delta_1,*}(x)=\iota_*^{-,\al\eps,l_\eps E'}(y_2)$$ in
$K^{\al \eps,l_\eps E'}_*(A_{\Delta_2})$.
\end{proposition}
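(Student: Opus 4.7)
The plan is to handle the even ($K_0$) and odd ($K_1$) cases in parallel, using Proposition~\ref{proposition-decom-unitaries} as the main decomposition tool together with the CIA property of Definition~\ref{def-CIA} to produce the required class $x$ in $K_*^{\alpha\eps,l_\eps E'}(A_{\Delta_1}\cap A_{\Delta_2})$.

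I would first bring $y_1$ and $y_2$ to standard form. In the odd case, Lemma~\ref{lemma-almost-canonical-form-odd}~(i) represents each $y_i$ by a quasi-unitary $u_i\in M_n(A_{\Delta_i}^+)$ with $\rho_{A_{\Delta_i}}(u_i)=I_n$, up to rescaling $\eps$ by a fixed factor. The equality $\jmath^{\eps,E'}_{\Delta_1,*}(y_1)=\jmath^{\eps,E'}_{\Delta_2,*}(y_2)$ in $K_1^{\eps,E'}(A)$, combined with Lemma~\ref{lemma-almost-canonical-form-odd}~(ii) applied to the images of the $u_i$ in $M_n(A^+)$, then yields, after padding, a homotopy of quasi-unitaries $(w_t)_{t\in[0,1]}$ in $M_{n+k}(A^+)$ from $\diag(u_1,I_k)$ to $\diag(u_2,I_k)$ with $\rho_A(w_t)=I_{n+k}$ throughout. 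Consequently $\diag(u_1,I_k)\cdot\diag(u_2,I_k)^{*}$ is a quasi-unitary in $M_{n+k}(A^+)$ homotopic to the identity, and I would apply Proposition~\ref{proposition-decom-unitaries} to it to obtain a decomposition, valid up to error $\alpha\eps$ and after further stabilization, of the form $w_1\cdot w_2$ with $w_i-I\in M_N(A_{\Delta_i})$ and each $w_i$ itself homotopic to $I$ through quasi-unitaries whose non-scalar part lies in $A_{\Delta_i}$. This gives $\diag(u_1,I_{k+N-1})\approx w_1 w_2\diag(u_2,I_{k+N-1})$, hence $w_1^{*}\diag(u_1,I_{k+N-1})\approx w_2\diag(u_2,I_{k+N-1})$, the left-hand side lying in $M_{n+k+N-1}(A_{\Delta_1}^+)$ and the right-hand side in $M_{n+k+N-1}(A_{\Delta_2}^+)$. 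The CIA property of $(A_{\Delta_1},A_{\Delta_2})$, applied at the level of matrix entries and combined with the stability of quasi-unitaries under small perturbations, produces a quasi-unitary $v\in M_{n+k+N-1}((A_{\Delta_1}\cap A_{\Delta_2})^+)$ close in norm to both sides, up to a controlled rescaling. Setting $x=[v]_{\alpha\eps,l_\eps E'}$, the required equalities $\jmath^{\alpha\eps,l_\eps E'}_{\Delta_i,\Delta_{3-i},*}(x)=\iota_*^{-,\alpha\eps,l_\eps E'}(y_i)$ for $i=1,2$ then follow from the homotopy of each $w_i$ to the identity inside $M_N(A_{\Delta_i}^+)$.

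The even case proceeds along the same lines via a quantitative Whitehead lemma: starting from the quasi-projection representatives $q_i\in M_n(A_{\Delta_i}^+)$ supplied by Lemma~\ref{lemma-almost-canonical-form}, a homotopy between stabilisations $\diag(q_1,I_\ell,0)$ and $\diag(q_2,I_{\ell'},0)$ in $M_N(A^+)$ is implemented, up to rescaling, by conjugation by a quasi-unitary $u\in M_{2N}(A^+)$ that is homotopic to the identity. Applying Proposition~\ref{proposition-decom-unitaries} to $u$ splits the conjugation as $w_1 w_2$ with $w_i-I\in M_{2N}(A_{\Delta_i})$, so that $w_2 q_2 w_2^{*}\in M_{2N}(A_{\Delta_2}^+)$ is close to $w_1^{*}q_1 w_1\in M_{2N}(A_{\Delta_1}^+)$, and a CIA step produces the required quasi-projection in $M_{2N}((A_{\Delta_1}\cap A_{\Delta_2})^+)$, representing the class $x$. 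The principal obstacle throughout is the bookkeeping of constants: each of Lemmas~\ref{lemma-almost-canonical-form} and~\ref{lemma-almost-canonical-form-odd}, Proposition~\ref{proposition-decom-unitaries}, the CIA estimate, and the quantitative Whitehead step all consume a bounded amount of control, and the propagation condition $l_\eps \cdot E'\lq E$ is precisely what guarantees that after composing them the construction remains inside the filtration level $E$ at which $(\Delta_1,\Delta_2,A_{\Delta_1},A_{\Delta_2})$ is a Mayer--Vietoris pair. A secondary subtlety is that CIA as stated concerns plain elements, so it must be upgraded to quasi-unitaries and quasi-projections in matrix algebras; this is routine, but adds another layer to the rescaling absorbed into $(\alpha,l)$.
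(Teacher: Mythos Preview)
The paper does not actually prove Proposition~\ref{prop-half-exact}: it is stated in Section~\ref{subsect-mv} as a result recalled from the earlier work on controlled Mayer--Vietoris sequences (\cite{oy4}, \cite{dell}), and the text moves on immediately to the boundary map. So there is no in-paper proof to compare against.

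That said, your proposal is correct and is precisely the strategy used in those references. In the odd case the key steps---putting $y_1,y_2$ into standard form via Lemma~\ref{lemma-almost-canonical-form-odd}, forming $\diag(u_1,I)\diag(u_2,I)^{*}$ which is null-homotopic in $A^+$, decomposing it as $w_1w_2$ via Proposition~\ref{proposition-decom-unitaries}, and then applying CIA to the pair $w_1^{*}\diag(u_1,I),\,w_2\diag(u_2,I)$ (more precisely to their non-scalar parts, which lie in the filtered pieces $A_{\Delta_i,\bullet}$ where CIA is assumed)---are exactly right, and the homotopies of the $w_i$ to the identity give the identifications $\jmath_{\Delta_i,\Delta_{3-i},*}(x)=\iota_*(y_i)$. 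The even case via a quantitative Whitehead conjugation is likewise the standard route; note that this Whitehead-type lemma is not stated in the present paper, so you are implicitly importing it from \cite{oy2} or \cite{oy4}. Your remarks on the bookkeeping of $(\alpha,l)$ and on upgrading CIA to unitizations are accurate and are indeed the only substantive work.
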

In other word, this means that the composition
$$K_*^{\bullet,\bullet}(A_{\Delta_1}\cap A_{\Delta_2})
\stackrel{(\jmath^{\bullet,\bullet}_{\Delta_1,\Delta_2,*},\jmath^{\bullet,\bullet}_{\Delta_2,\Delta_1,*})}{-\!\!\!-\!\!\!-\!\!\!-\!\!\!-\!\!\!-\!\!\!-\!\!\!-\!\!\!-\!\!\!-\!\!\!-\!\!\!-\!\!\!\lto}
K^{\bullet,\bullet}_*(A_{\Delta_1})\oplus K^{\bullet,\bullet}_*(A_{\Delta_2})
\stackrel{(\jmath^{\bullet,\bullet}_{\Delta_1,*}-\jmath^{\bullet,\bullet}_{\Delta_2,*})}{-\!\!\!-\!\!\!-\!\!\!-\!\!\!-\!\!\!-\!\!\!-\!\!\!-\!\!\!\lto}K^{\bullet,\bullet}_*(A)$$ is exact at order $E$, up to rescaling by $(\al,l)$. We shall see later on that this composition can be completed  at order $E$,   up to rescaling by a control pair, in a  six term exact sequence (called in \cite[Theorem 8.4]{bd}  
 $E$-controlled Mayer-Vietoris exact sequence).

We introduce first the quantitative boundary map that fits into the controlled Mayer-Vietoris exact.
\begin{lemma}\label{lemma-technic}For every positive number $c$, there exists a control pair $(\lambda,k)$ such that the following holds:

\smallskip

Let $\E$ be a coarse structure, let $A$ be a  unital  $\E$-filtered $C^*$-algebra,
let $E$ be an element in $\E$  and  let $(\De_1,\De_2,A_{\Delta_1},A_{\Delta_2})$ be a   $E$-controlled Mayer-Vietoris pair  for $A$   with coercitivity $c$. Let   $E'$ be an element in $\E$ such that $2\cdot E'\lq E$,  let $\eps$ be  in   $(0,\frac{1}{4\lambda^3})$, let  $m$ and $n$ be integers and let  $u$ be in $U_n^{\eps,E'}(A)$, let  $v$ be  in $U_m^{\eps,E'}(A)$ and let $w_1,\,w_2$ be $\eps$-$E'$-unitaries  in $M_{n+m}(A)$ such that
\begin{itemize}
\item $w_i-I_{n+m}$ is an element  in the matrix algebra   $M_{n+m}({A_{\Delta_i}})$ for $i=1,2$;
\item $\|\diag(u,v)-w_1w_2\|<\eps$.
\end{itemize}
Then,
\begin{enumerate} 
\item there exists  a $\lambda\eps$-$k_\eps  E'$-projection $q$ in $M_{n+m}(A)$ such that
\begin{itemize}
\item $q-\diag (I_n,0)$  is an element  in the matrix algebra   $M_{n+m}(A_{\Delta_{1}}\cap A_{\Delta_{2}})$;
\item $\|q-w_1^*\diag (I_n,0)w_1\|<\lambda\eps$;
\item $\|q-w_2\diag (I_n,0)w_2^*\|<\lambda\eps$.
\end{itemize}
\item if $q$ and $q'$ are two $\lambda\eps$-$k_\eps  E'$-projections   in $M_{n+m}(A)$  that satisfy the first  point, then $$[q,n]_{\lambda^2\eps,k_\eps E'}=[q',n]_{\lambda^2\eps,k_\eps E'}$$ in $K^{\lambda^2\eps,k_\eps  E'}_0(A_{\De_1}\cap A_{\De_2})$.
\item Let  $(w_1,w_2)$ and $(w'_1,w'_2)$ be two pairs of   $\eps$-$E'$-unitaries  in $M_{n+m}^{\eps,E'}(A)$ satisfying the assumption of the lemma and let $q$ and $q'$ be  $\lambda\eps$-$k_\eps \cdot E'$-projections  in $M_{n+m}(A)$  that satisfy the first point relatively to  respectively  $(w_1,w_2)$ and $(w'_1,w'_2)$, then $$[q,n]_{\lambda^3\eps, 2k_\eps   E'}=[q',n]_{\lambda^3\eps,2k_\eps  E'}$$ in $K^{\lambda^3\eps,2k_\eps  E'}_0(A_{\De_1}\cap A_{\De_2})$. \end{enumerate}
\end{lemma}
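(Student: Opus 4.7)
The plan is to construct $q$ by symmetrising via the CIA property the two natural candidate projections $p_1 := w_1^* \diag(I_n,0) w_1$ and $p_2 := w_2 \diag(I_n,0) w_2^*$, and then to handle the uniqueness and independence of the lift $(w_1,w_2)$ through standard homotopies of almost-projections.

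First I would observe that $\diag(u,v)$ commutes exactly with $\diag(I_n,0)$; combined with $\|\diag(u,v) - w_1 w_2\| < \eps$ and $\|w_i^* w_i - I\| < \eps$, this yields $\|p_1 - p_2\| = O(\eps)$. Writing $w_i = I_{n+m} + a_i$ with $a_i \in M_{n+m}(A_{\De_i})$, direct expansion shows that $p_i - \diag(I_n,0)$ lies in $M_{n+m}(A_{\De_i})$ with propagation at most $2E' \leq E$, and a short computation gives $\|p_i^2 - p_i\| = O(\eps)$. Applying the CIA property of the pair $(A_{\De_1,2E'}, A_{\De_2,2E'})$ with coercivity $c$ to $p_i - \diag(I_n,0)$ produces $z \in M_{n+m}(A_{\De_1,2E'} \cap A_{\De_2,2E'})$ within $O(c\eps)$ of each; after symmetrising by $z \mapsto (z+z^*)/2$, setting $q := \diag(I_n,0) + z$ yields the required $\lambda\eps$-$k_\eps E'$-projection, with $k_\eps = 2$ and $\lambda$ depending only on $c$.

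For the second assertion, if $q, q'$ both satisfy the first point for the same pair $(w_1,w_2)$, then each lies within $O(\lambda\eps)$ of $p_1$, so $\|q - q'\| = O(\lambda\eps)$. The straight-line interpolation $q_t := tq + (1-t)q'$ remains self-adjoint, its difference with $\diag(I_n,0)$ stays inside $M_{n+m}(A_{\De_1}\cap A_{\De_2})$ with the same propagation bound $2E'$, and $\|q_t^2 - q_t\|$ is controlled by $\|q^2 - q\| + O(\|q-q'\|) = O(\lambda^2 \eps)$. This provides a homotopy of $\lambda^2\eps$-$k_\eps E'$-projections in the intersection algebra, giving the equality $[q,n]_{\lambda^2\eps,k_\eps E'} = [q',n]_{\lambda^2\eps,k_\eps E'}$.

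For the third assertion, I would use a doubling argument. Given two lifts $(w_1,w_2)$ and $(w'_1,w'_2)$ of $\diag(u,v)$, form $W_i := \diag(w_i,(w'_i)^*) \in M_{2(n+m)}(A)$; then $W_i - I_{2(n+m)} \in M_{2(n+m)}(A_{\De_i})$, and $W_1 W_2$ is $O(\eps)$-close to $\diag(\diag(u,v),\diag(u,v)^*)$, which is connected to $I_{2(n+m)}$ through a standard rotation homotopy of approximate unitaries of bounded propagation. Running the construction of part (1) continuously along this homotopy links the stabilisation of $q$ to that of $q'$ through $\lambda^3\eps$-$2k_\eps E'$-projections in the intersection algebra, yielding the required equality of classes. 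The main obstacle will be this third step: one must carefully track how the CIA coercivity, the rotation step, and the symmetrisation combine, and check that the continuous family of CIA-outputs can be arranged to stay inside $\diag(I_n,0) + M_{2(n+m)}(A_{\De_1}\cap A_{\De_2})$ with controlled propagation — this is precisely where the extra factor of $\lambda$ in control and the extra factor of $2$ in propagation arise.
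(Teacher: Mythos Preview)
The paper states this lemma without proof (it is imported from the references \cite{oy4,dell}), so there is no in-paper argument to compare against directly.  Your treatment of parts (i) and (ii) is the standard one and is correct: the two candidates $p_i$ differ by $O(\eps)$ because $\diag(u,v)$ commutes with $\diag(I_n,0)$, a single application of CIA produces $q$, and any two such $q$'s are joined by a linear path since both lie within $O(\lambda\eps)$ of the fixed reference $p_1$.

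Part (iii), however, has two genuine gaps.  First, the doubling is set up incorrectly: with $W_i=\diag(w_i,(w_i')^*)$ one obtains
\[
W_1W_2=\diag\bigl(w_1w_2,\ (w_1')^*(w_2')^*\bigr)=\diag\bigl(w_1w_2,\ (w_2'w_1')^*\bigr),
\]
and nothing forces $w_2'w_1'$ to be close to $\diag(u,v)$; only $w_1'w_2'$ is.  Reversing the order in the lower block repairs this but then $W_i-I$ no longer lies in $M_{2(n+m)}(A_{\De_i})$.  Second, and more seriously, ``running the construction of part (i) continuously along the homotopy'' is not justified: the CIA property is a bare existence statement with no continuous selection, so the output $q$ of part (i) cannot simply be deformed along a path of inputs.

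The argument that actually works avoids both problems.  Set $V_1:=w_1^*w_1'$ and $V_2:=w_2(w_2')^*$; then $V_i-I\in M_{n+m}(A_{\De_i})$, each $V_i$ conjugates $q'$ to within $O(\lambda\eps)$ of $q$, and from $w_1w_2\approx w_1'w_2'$ one gets $\|V_1-V_2\|=O(\eps)$.  A \emph{single} application of CIA now produces $V$ with $V-I\in M_{n+m}(A_{\De_1}\cap A_{\De_2})$ and $Vq'V^*$ close to $q$.  Since $\diag(V,V^*)$ is homotopic to $I_{2(n+m)}$ through almost-unitaries $W_t$ with $W_t-I$ in the intersection (Lemma~\ref{cor-example-homotopy}(ii), the rotation being scalar), conjugating $\diag(q',0)$ by $W_t$ gives an explicit homotopy of almost-projections in $(A_{\De_1}\cap A_{\De_2})^+$ from $\diag(q',0)$ to $\diag(Vq'V^*,0)$, and a final linear interpolation reaches $\diag(q,0)$.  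This is where the extra factor $\lambda$ and the doubling of propagation genuinely arise, and no continuous CIA-selection is needed.
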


\begin{remark}\label{remark-unicite-lemme-technic}\
We have a similar statement in the non-unital case with  $u$ in $U_n^{\eps,E'}({A^+})$ and  $v$ in $U_m^{\eps,E'}({A^+)}$  such that $u-I_n$ and $v-I_m$ have coefficients in $A$.
\end{remark}

We recall now the definition of  the quantitative boundary map associated to a controlled Mayer-Vietoris pair.
Let $\E$ be a coarse structure, let $A$ be a  $\E$-filtered   $C^*$-algebra and  let $(\De_1,\De_2,A_{\Delta_1},A_{\Delta_2})$ be a  $E$-controlled  Mayer-Vietoris pair  for $A$ with coercitivity $c$. Assume first that $A$ is unital.

Let $(\alpha,l)$ be a control pair as is Proposition \ref{proposition-decom-unitaries}. For any  $\eps$ in  $(0,\frac{1}{4\alpha})$, any $E'$ in $\E$ such that $2E'\lq E$   and any $\eps$-$E'$-unitary $u$ in $\M_n(A)$, let $v$ be an $\eps$-$E'$-unitary in some $\M_m(A)$ such that
$\diag(u,v)$ is homotopic to $I_{n+m}$ as a $3\eps$-$2E'$-unitary in  $\M_{n+m}(A)$, we can take for instance $v=u^*$ (see Lemma \ref{cor-example-homotopy}). According to Proposition \ref{proposition-decom-unitaries} and up to replacing $v$ by $\diag(v,I_k)$ for some integer $k$, there exists   $w_1$ and $w_2$ two $3\alpha\eps$-$2l_{3\eps} E'$-unitaries  in $M_{n+m}(A)$  such that
\begin{itemize}
\item $w_i-I_{n+m}$ is an element  in the matrix algebra  $M_{n+m}(A_{\Delta_i})$ for $i=1,2$;
\item for $i=1,2$, there exists  a homotopy $(w_{i,t})_{t\in[0,1]}$ of  $3\alpha\eps$-$2l_{3\eps}E'$-unitaries between  $1$ and $w_i$ such that
$w_{i,t}-I_{n+m}$ is an element  in the matrix algebra   $M_{n+m}(A_{\De_i})$ for all $t$ in $[0,1]$.
\item $\|\diag(u,v)-w_1w_2\|<3\alpha\eps$.
\end{itemize}
Let $(\lambda,k)$ be the control pair of  Lemma \ref{lemma-technic} (recall that $(\la,k)$  depends only on the coercivity  $c$). Then if $\eps$ is in $(0,\frac{1}{12\al\la^3})$,  there exists
   a $3\alpha\lambda\eps$-$2l_{3\eps} k_{3\alpha\eps} E'$-projection $q$ in $M_{n+m}(A)$ such that
\begin{itemize}
\item $q-\diag (I_n,0)$ is an element  in the matrix algebra    $$M_{n+m}(A_{\Delta_{1}\cap A_{\Delta_{2}}});$$
\item $\|q-w_1^*\diag (I_n,0)w_1\|<3\alpha\lambda\eps$;
\item $\|q-w_2\diag (I_n,0)w_2^*\|<3\alpha\lambda\eps$.
\end{itemize}
In view of the second point of Lemma   \ref{lemma-technic},  the class $[q,n]_{3\alpha\lambda^3\eps,4l_{3\eps} k_{3\alpha\eps} E'}$ in
$$K_0^{3\alpha\lambda^3\eps,4l_{3\eps} k_{3\alpha\eps} E'}(A_{\Delta_1}\cap A_{\Delta_2})$$ does not depend on the choice of $w_1$, $w_2$ or $q$.
Set then $\alpha_c=3\alpha\lambda^3$ and $$k_c:\left(0,\frac{1}{4\alpha_c}\right)\lto \N\setminus\{0\},\,\eps\mapsto 4l_{3\eps} k_{3\alpha\eps}$$ and define
$\partial^{\eps,E'}_{\Delta_1,\Delta_2,1}([u]_{\eps,E'})=[q,n]_{\alpha_c\eps,k_c E'}$. Then  for any $\eps$ in $(0,\frac{1}{4\al_c})$ and any $E'$ in $\E$ such that $k_{c,\eps}E'\lq E$, the morphism
$$\partial^{\eps,E'}_{\Delta_1,\Delta_2,1}:K_1^{\eps,E'}(A)\to K_0^{\alpha_c\eps,k_c E'}(A_{\Delta_1}\cap A_{\Delta_2})$$ is well defined.

In the non unital case $\partial^{\eps,s}_{\Delta_1,\Delta_2,1}$ is defined similarly by  noticing that in view of Lemma \ref{lemma-almost-canonical-form-odd} and up to rescaling  $\eps$, every element $x$ in $K^{\eps,E}_1(A)$ is the class of an $\eps$-$E$-unitary $u$  in some $M_n({A}^+)$ such that $u-I_n$ has coefficients in $A$.
It is straightforward to check that
$\partial^{\bullet,\bullet}_{\Delta_1,\Delta_2,1}$ is compatible with the structure morphisms, i.e $$\iota_*^{-,\al_c\eps'',k_{c,\eps''}E''}\circ \partial^{\eps,E'}_{\Delta_1,\Delta_2,1}=
\partial^{\eps'',E''}_{\Delta_1,\Delta_2,1}\circ \iota_*^{\eps',E',-}$$ for any 
$\eps'$ and $\eps''$  in $(0,\frac{1}{4\al_c})$ and any $E'$ and $E''$ in $\E$ with $E'\lq E''$ and $k_{c,\eps'}E'\lq k_{c,\eps''}E''\lq E$.

In the even case, the quantitative boundary map associated to a controlled Mayer-Vietoris pair is defined by using controlled Bott periodicity \cite[Section 2]{dell}. Up to rescale the control pair $(\al_c,k_c)$, we obtain
for any $\eps$ in $(0,\frac{1}{4\al_c})$ and any $E'$ in $\E$ such that $k_{c,\eps}E'\lq E$, the morphism
$$\partial^{\eps,E'}_{\Delta_1,\Delta_2,0}:K_0^{\eps,E'}(A)\to K_1^{\alpha_c\eps,k_c E'}(A_{\Delta_1}\cap A_{\Delta_2}).$$
We set then $$\partial^{\eps,E'}_{\Delta_1,\Delta_2,*}=\partial^{\eps,E'}_{\Delta_1,\Delta_2,0}\oplus \partial^{\eps,E'}_{\Delta_1,\Delta_2,1}.$$
Then $$\partial^{\eps,E'}_{\Delta_1,\Delta_2,*}:K_*^{\eps,E'}(A)\to K_{*+1}^{\alpha_c\eps,k_c E'}(A_{\Delta_1}\cap A_{\Delta_2})$$ is a morphism of degree $1$ compatible with the structure morphisms called the $\eps$-$E'$-quantitative Mayer-Vietoris boundary map.

\smallbreak

Notice that the  quantitative boundary map associated to an $E$-controlled  Mayer-Vietoris pair   is natural in the following sense:  let $\E$ be a coarse structure, let $A$ and $B$ be $\E$-filtered $C^*$-algebras, let $E$ be an element in $\E$, let  $(\De_1,\De_2,A_{\Delta_1},A_{\Delta_2})$  and 
$(\De'_1,\De'_2,B_{\Delta'_1},B_{\Delta'_2})$ be respectively 
 $E$-controlled  Mayer-Vietoris pairs  for $A$ and $B$ with   coercivity $c$    and let $f:A\to B$ be a   homomorphism of $\E$-filterered $C^*$-algebras such that $f(\De_1)\subseteq \De'_1,\,f(\De_2)\subseteq \De'_2,\,f(A_{\De_1})\subseteq B_{\De'_1}$ and$f(A_{\De_2})\subseteq B_{\De'_2}$. Then we have
  \begin{equation}\label{equ-boundary-natural}
f^{\al_c\eps,k_{c,\eps}E'}_{/A_{\De_1}\cap A_{\De_2},*} \circ \partial^{\eps,E'}_{\De_1,\De_2,*}=\partial^{\eps,E'}_{\De'_1,\De'_2,*} \circ f^{\eps,E'}_*,\end{equation} for any $\eps$ in $(0,\frac{1}{4\al_c})$ and any $E'$ in $\E$ with $k_{c,\eps}E'\lq E$, 
where $$f_{/A_{\De_1}\cap A_{\De_2}}:A_{\De_1}\cap A_{\De_2}\to B_{\De'_1}\cap B_{\De'_2}$$ is the restriction of $f$ to $A_{\De_1}\cap A_{\De_2}$.

We now investigate  controlled  exactness at the source for  the quantitative boundary map associated to a controlled  Mayer-Vietoris pair. 
We start with the following lemma which will play a key role in the proof of the main theorem.

\begin{lemma}\label{lemma-bound1}
There exists a control pair $(\lambda,l)$ such that 
\begin{itemize}
\item for any coarse structure $\E$, any  unital $\E$-filtered $C^*$-algebra $A$ and any subalgebras  $A_1$ and $A_2$   of $A$ such that $A_1,\,A_2$  and $A_1\cap A_2$ are respectively filtered by     $(A_1\cap A_E)_{E\in\E},\,(A_2\cap A_E)_{E\in\E}$ and $(A_1\cap A_2\cap A_E)_{E\in\E}$;
\item for any positive number $\eps$  with  $\eps<\frac{1}{4\lambda}$, any $E$ in $\E$,   any integers $n$ and $m$ and any $\eps$-$E$-unitaries $u_1$ in $M_n(A)$ and $u_2$ in $M_m(A)$;
\item for any $\eps$-$E$-unitaries $v_1$ and $v_2$ respectively in $M_{n+m}({A^+_1})$ and 
$M_{n+m}({A^+_2})$ such that  
\begin{itemize} 
\item $\|\diag(u_1,u_2)-v_1v_2\|<\eps$;
\item there exists an $\eps$-$E$-projection  $q$ in $M_{n+m}(A)$ such that $q-\diag(I_n,0)$ is in $M_{n+m}(A_{1}\cap A_{2}),\,\|q-v_1^*\diag(I_n,0) v_1\|<\eps$ and $[q,n]_{\eps,E}=0$ in $K^{\eps,E}_0(A_{1}\cap A_{2})$.
\end{itemize}
\end{itemize}
Then there exists an integer $k$ and  $\lambda \eps$-$l_{\eps} E$-unitaries $w_1$ and $w_2$ respectively in  $M_{n+k}({A^+_{1}})$ and $M_{n+k}({A^+_{2}})$ such that
 $$\|\diag(u_1,I_k)-\diag(w_1w_2)\|<\la \eps.$$
 Moreover, if $v_i-I_{n+k}$ lies in $M_{n+k}(A_i)$ for $i=1,2$ then $w_1$ and $w_2$ can be chosen such that
  $w_i-I_{n+k}$ lies in $M_{n+m}(A_i)$ for $i=1,2$.\end{lemma}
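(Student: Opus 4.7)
The plan is to use the vanishing of $[q,n]_{\eps,E}$ in $K_0^{\eps,E}(A_1\cap A_2)$ to manufacture, after a single stabilization, an approximate unitary $V$ with coefficients in $(A_1\cap A_2)^+$ that intertwines a scalar projection with $\bar q$; to then observe that both products $\bar v_1 V$ and $V^*\bar v_2$ approximately commute with that scalar projection, hence are approximately block-diagonal; and finally to read off $w_1$ and $w_2$ as the corresponding diagonal blocks.

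Set $p=\diag(I_n,0_m)$. The assumption $[q,n]_{\eps,E}=0$ means, by the definition of the equivalence relation defining quantitative $K_0$, that there exist an integer $k$ and a homotopy of $\eps$-$E$-projections in $M_N((A_1\cap A_2)^+)$, with $N=2n+m+k$, joining $\bar p:=\diag(p,I_{k+n})$ to $\bar q:=\diag(q,I_{k+n})$; that $\bar q\in M_N((A_1\cap A_2)^+)$ follows from the hypothesis $q-p\in M_{n+m}(A_1\cap A_2)$. After slightly perturbing the path so that its scalar part is constantly equal to $\bar p$ (costing only a universal rescaling of the control), a standard Whitehead-type argument -- subdivide the path in a number of steps controlled solely by $\eps$ and interpolate consecutive projections by the formula $U=p_ip_{i+1}+(1-p_i)(1-p_{i+1})$ normalized via functional calculus -- yields an approximate unitary $V\in M_N((A_1\cap A_2)^+)$, of propagation a universal multiple of $E$ and control a universal multiple of $\eps$, satisfying $V-I_N\in M_N(A_1\cap A_2)$ and $V\bar pV^*\approx\bar q$.

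Stabilize by $\bar v_i:=\diag(v_i,I_{k+n})\in M_N(A_i^+)$ and $\bar u:=\diag(u_1,u_2,I_{k+n})$, and set
\[W:=\bar v_1V\in M_N(A_1^+),\qquad \bar w:=V^*\bar v_2\in M_N(A_2^+),\]
the containment using $V,V^*\in M_N((A_1\cap A_2)^+)$. One has $\bar v_1\bar q\bar v_1^*\approx\bar p$ (assumption (5), stabilized) and, using additionally $\bar v_1\bar v_2\approx\bar u$ together with the computation $\bar u\bar p\bar u^*=\diag(u_1u_1^*,0_m,I_{k+n})\approx\bar p$, also $\bar v_2\bar p\bar v_2^*\approx\bar q$. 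Plugging $V\bar pV^*\approx\bar q$ into these identities yields
\[W\bar pW^*\approx\bar p,\qquad \bar w\bar p\bar w^*\approx\bar p,\qquad W\bar w=\bar v_1(VV^*)\bar v_2\approx\bar u,\]
all with errors a universal multiple of $\eps$. An approximate unitary that approximately commutes with a projection $\bar p$ lies, up to a universal control loss, within its block-diagonal part $\bar pX\bar p+(1-\bar p)X(1-\bar p)$, each diagonal block being itself an approximate unitary. Conjugating by the scalar permutation $\sigma\in M_N(\C)$ that puts the range of $\bar p$ first gives $\sigma W\sigma^{-1}\approx\diag(W_1,W_2)$ and $\sigma\bar w\sigma^{-1}\approx\diag(\bar w_1,\bar w_2)$, with $W_1\in M_{2n+k}(A_1^+)$, $\bar w_1\in M_{2n+k}(A_2^+)$ and $W_2,\bar w_2$ of size $m$; since $\sigma\bar u\sigma^{-1}=\diag(u_1,I_{k+n},u_2)$, comparing the first $(2n+k)$-block of $W\bar w\approx\bar u$ yields $W_1\bar w_1\approx\diag(u_1,I_{n+k})$. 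Setting $k':=n+k$, $w_1:=W_1$, $w_2:=\bar w_1$ is the sought factorization.

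For the ``moreover'' clause, if $v_i-I_{n+m}\in M_{n+m}(A_i)$ then $\bar v_i-I_N\in M_N(A_i)$; combined with $V-I_N\in M_N(A_1\cap A_2)\subset M_N(A_i)$ this gives $W-I_N\in M_N(A_1)$ and $\bar w-I_N\in M_N(A_2)$, and since $X\mapsto \bar pX\bar p+(1-\bar p)X(1-\bar p)$ preserves each ideal $M_N(A_i)$, the blocks $w_1-I_{2n+k}$ and $w_2-I_{2n+k}$ lie in $M_{2n+k}(A_1)$ and $M_{2n+k}(A_2)$ respectively. The main technical obstacle is bookkeeping the control pair through the Whitehead step (the number of subdivisions being a universal function of $\eps$, each multiplication inflating propagation by a universal factor) and through the block-extraction, to ensure that the overall rescaling $(\lambda,l)$ depends only on the coercivity $c$.
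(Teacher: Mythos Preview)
The paper does not prove this lemma; it is stated as part of the review of quantitative $K$-theory material imported from \cite{oy2,oy4,dell}, with no argument supplied. So there is no ``paper's own proof'' to compare against.

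Your argument is the natural one and is essentially what one finds in those references. The overall architecture --- produce from $[q,n]_{\eps,E}=0$ a controlled almost-unitary $V$ in $(A_1\cap A_2)^+$ intertwining the scalar projection $\bar p$ with $\bar q$, correct $\bar v_1,\bar v_2$ to $W=\bar v_1 V$ and $\bar w=V^*\bar v_2$, observe that both now nearly commute with $\bar p$, and read off the upper-left diagonal blocks --- is correct, and the estimates you indicate (including the derivation of $\bar v_2\bar p\bar v_2^*\approx\bar q$ from $\bar v_1\bar v_2\approx\bar u$ and $\bar u\bar p\bar u^*\approx\bar p$) all go through. The treatment of the ``moreover'' clause is also fine, since $\bar p$ is scalar and the compression $X\mapsto\bar pX\bar p+(1-\bar p)X(1-\bar p)$ therefore fixes $I_N$ and preserves $M_N(A_i)$.

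One point deserves more care than you give it. You write that to build $V$ one should ``subdivide the path in a number of steps controlled solely by $\eps$'', and later that ``the number of subdivisions [is] a universal function of $\eps$''. This is the entire content of the step and it is not a triviality: an arbitrary homotopy of $\eps$-$E$-projections has no a priori modulus of continuity, so naive subdivision yields an uncontrolled number $N$ of Whitehead factors and hence propagation $\sim 2NE$ for $V$, which is useless. What makes the argument work is the Lipschitz-homotopy lemma of quantitative $K$-theory (see \cite{oy2}, around Proposition~1.30): any homotopy of $\eps$-$E$-projections can be replaced, after a universal rescaling of the control, by one that is Lipschitz with a \emph{universal} constant. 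Only then does your Whitehead product have a universally bounded number of factors and $V$ genuinely has propagation $l_\eps E$ for a universal $l$. You should invoke that lemma explicitly rather than paraphrase it, since your paraphrase as written suggests the naive subdivision argument, which fails.
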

As a consequence,  we deduce   controlled  exactness at the source for  the quantitative boundary map associated to a controlled  Mayer-Vietoris pair. Moreover,   this controlled  exactness is persistent at any order. 

\begin{corollary}
For any positive number $c$, there exists a control pair $(\lambda,l)$ such that 
\begin{itemize}
\item for any  coarse structure $\E$ and any $\E$-filtered $C^*$-algebra $A$;
\item for any $E$ in $\E$ and any    $E$-controlled Mayer-Vietoris pair $(\De_1,\De_2,A_{\Delta_1},A_{\Delta_2})$  for $A$  with coercitivity $c$.
\item for any positive numbers $\eps'$ and $\eps''$  with   $0<\al_c\eps'\lq \eps''<\frac{1}{4\lambda}$ and any $E'$ and $E''$ in $\E$ with $ k_{c,\eps'}E'\lq E$ and
$k_{c,\eps'}E' \lq E''$
\end{itemize}
then for any $y$ in $K_*^{\eps,E'}(A)$ such that $$\iota_*^{-,\eps'',E''}\circ\partial^{\eps',E'} _{\De_1,\De_2,*}(y)=0$$ in 
$K_{*+1}^{\eps'',E''}(A_{\De_1}\cap A_{\De_2})$,  there exist $x_1$ in $K_{*}^{\lambda\eps'',l_{\eps'' }E''}(A_{\De_1})$ and 
 $x_2$ in $K_*^{\lambda\eps'',l_{\eps''}E''}(A_{\De_2})$ such that 
$$\iota_*^{-, \lambda\eps'',l_{\eps''}E''}(y)=\jmath^{\lambda\eps'',l_{\eps''}E''}_{\Delta_1,*}(x_1)-\jmath^{\lambda\eps'',l_{\eps'' }E''}_{\Delta_2,*}(x_2).$$\end{corollary}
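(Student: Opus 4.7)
The plan is to deduce the corollary from Lemma~\ref{lemma-bound1} in the odd case ($*=1$) and to reduce the even case ($*=0$) to the odd case via controlled Bott periodicity. The control pair $(\lambda,l)$ in the conclusion will be a composition of the one furnished by Lemma~\ref{lemma-bound1} with the pair $(\al_c,k_c)$ built into the boundary map, together with a factor accounting for the Bott isomorphism; each of these depends only on the coercivity $c$.

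In the odd case I would first invoke Lemma~\ref{lemma-almost-canonical-form-odd} to represent $y\in K_1^{\eps',E'}(A)$ by an $\eps'$-$E'$-unitary $u$ in some $M_n(A^+)$ with $u-I_n\in M_n(A)$. Setting $v=u^*$, the second point of Lemma~\ref{cor-example-homotopy} ensures that $\diag(u,v)$ is homotopic to $I_{2n}$ as a $3\eps'$-$2E'$-unitary, so Proposition~\ref{proposition-decom-unitaries} produces, after stabilization to size $n+m$, a pair $(w_1,w_2)$ of $3\alpha\eps'$-$2l_{3\eps'}E'$-unitaries in $M_{n+m}(A^+)$ with $w_i-I_{n+m}\in M_{n+m}(A_{\De_i})$ and $\|\diag(u,v)-w_1w_2\|<3\alpha\eps'$. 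Running the construction of $\partial^{\eps',E'}_{\De_1,\De_2,1}$ then yields an $\al_c\eps'$-$k_{c,\eps'}E'$-projection $q$ in $M_{n+m}(A)$ with $q-\diag(I_n,0)\in M_{n+m}(A_{\De_1}\cap A_{\De_2})$, $\|q-w_1^*\diag(I_n,0)w_1\|<\al_c\eps'$, and whose class $[q,n]_{\al_c\eps',k_{c,\eps'}E'}$ represents $\partial^{\eps',E'}_{\De_1,\De_2,1}(y)$.

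The hypothesis $\iota_0^{-,\eps'',E''}\circ\partial^{\eps',E'}_{\De_1,\De_2,1}(y)=0$ in $K_0^{\eps'',E''}(A_{\De_1}\cap A_{\De_2})$ gives exactly the vanishing of $[q,n]$ at level $(\eps'',E'')$ needed to invoke Lemma~\ref{lemma-bound1}, once $(w_1,w_2,q)$ has been pushed to that control and propagation via the standing inequalities $\al_c\eps'\lq\eps''$ and $k_{c,\eps'}E'\lq E''$. The lemma then returns an integer $k$ together with $\lambda\eps''$-$l_{\eps''}E''$-unitaries $\tilde w_1\in M_{n+k}(A_{\De_1}^+)$ and $\tilde w_2\in M_{n+k}(A_{\De_2}^+)$ satisfying $\tilde w_i-I_{n+k}\in M_{n+k}(A_{\De_i})$ and $\|\diag(u,I_k)-\tilde w_1\tilde w_2\|<\lambda\eps''$. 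Setting $x_1=[\tilde w_1]_{\lambda\eps'',l_{\eps''}E''}\in K_1^{\lambda\eps'',l_{\eps''}E''}(A_{\De_1})$ and $x_2=[\tilde w_2^*]_{\lambda\eps'',l_{\eps''}E''}\in K_1^{\lambda\eps'',l_{\eps''}E''}(A_{\De_2})$, and using the first point of Lemma~\ref{cor-example-homotopy} to identify $[\tilde w_1\tilde w_2]$ with $[\tilde w_1]+[\tilde w_2]$ (at the mild rescaling the lemma demands), one finds $\jmath_{\De_1,*}^{\lambda\eps'',l_{\eps''}E''}(x_1)-\jmath_{\De_2,*}^{\lambda\eps'',l_{\eps''}E''}(x_2)=\iota_*^{-,\lambda\eps'',l_{\eps''}E''}(y)$ in $K_1^{\lambda\eps'',l_{\eps''}E''}(A)$, as required.

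For the even case, I would reduce to the odd case via the controlled Bott periodicity of \cite[Section~2]{dell}: suspension sends the controlled Mayer-Vietoris pair $(\De_1,\De_2,A_{\De_1},A_{\De_2})$ to one for the suspension $SA$ with the same coercivity, Bott periodicity intertwines the even and odd boundary maps (this is precisely how $\partial^{\eps',E'}_{\De_1,\De_2,0}$ is defined), and it transports the factorization obtained for $SA$ back to $A$, at the cost of absorbing the Bott rescaling into $(\lambda,l)$. The main obstacle is the careful bookkeeping: one must verify that the chain of rescalings $\eps'\mapsto 3\alpha\eps'\mapsto\al_c\eps'\mapsto\eps''\mapsto\lambda\eps''$ and $E'\mapsto 2l_{3\eps'}E'\mapsto k_{c,\eps'}E'\mapsto E''\mapsto l_{\eps''}E''$ is governed by a single control pair depending only on $c$, and that the sign and stabilization conventions end up producing the difference $x_1-x_2$ with precisely the stated norm and propagation bounds rather than some mismatched combination.
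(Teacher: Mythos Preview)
Your proposal is correct and follows precisely the approach the paper intends: the paper states this corollary without proof, merely remarking that it is ``a consequence'' of Lemma~\ref{lemma-bound1}, and you have supplied exactly the details of that deduction---unpacking the construction of $\partial^{\eps',E'}_{\De_1,\De_2,1}$, pushing all data to the common level $(\eps'',E'')$, invoking Lemma~\ref{lemma-bound1}, and handling the even case by controlled Bott periodicity just as the boundary map itself is defined.
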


We us now investigate the controlled  exactness at the range for  the quantitative boundary map associated to a controlled  Mayer-Vietoris pair. 
We start with the following lemma which will play also a key role in the proof of the main theorem.
\begin{lemma}\label{lemma-exactness-inclusion}There exists a control pair $(\lambda,h)$ such that the following holds:

\smallskip

\begin{itemize}
\item Let $\E$ be a coarse structure, let $A$ be a unital $\E$-filtered $C^*$-algebra and let $A_1$ and $A_2$ be subalgebras of $A$ such that
$A_1,\,A_2$ and $A_1\cap A_2$ are respectively filtered by     $(A_1\cap A_E)_{E\in\E}$, 
$(A_2\cap A_E)_{E\in \E}$ and
$(A_1\cap A_2\cap A_E)_{E\in\E}$;
\item let $\eps$ be in $(0,\frac{1}{4\la})$ and let $E$ be in $\E$;
\item  let $n$ and $N$ be positive integers with $n\lq N$ and let $p$ an $\eps$-$E$-projection in $M_N(({A_1\cap A_2})^+)$ such that $\rho_{A_1\cap A_2}(p)=\diag(I_n,0)$.
\end{itemize}
Assume 
that
 \begin{itemize}
 \item $p$ is homotopic to $\diag(I_n,0)$ as an $\eps$-$E$-projection in
 $M_N({A^+_{1}})$;
 \item $p$ is  homotopic to $\diag(I_n,0)$ as an $\eps$-$E$-projection in
 $M_N({A^+_{2}})$.
 \end{itemize}Then there exist  an integer $N'$ with $N'\gq N$,  and four  $\la\eps$-$h_\eps E$-unitaries $w_1$ and $w_2$ in $M_{N'}(A)$,  $u$ in $M_n(A)$ and $v$ in $M_{N'-n}(A)$  such that
\begin{itemize}
\item $w_i-I_{N'}$ is an element  in   $M_{N'}(A_{i})$ for $i=1,2$;
\item $$\|w_1^*\diag(I_n,0)w_1-diag(p,0)\|<\la\eps$$ and
$$\|w_2\diag(I_n,0)w_2^*-\diag(p,0)\|<\la\eps.$$
 \item for $i=1,2$, then $w_i$ is connected to $I_{N'}$ by a homotopy
of $\la\eps$-$h_\eps E$-unitaries  $(w_{i,t})_{t\in[0,1]}$ in $M_{N'}(A)$ such  that $w_{i,t}-I_{N'}$ is in $M_{N'}(A_{i})$ for all $t$ in $[0,1]$. 
\item $\|\diag(u,v)-w_1w_2\|<\la\eps$.
\end{itemize}
\end{lemma}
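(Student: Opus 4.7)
\smallskip
\noindent\textbf{Proof plan for Lemma \ref{lemma-exactness-inclusion}.}
The strategy is to translate each of the two projection homotopies into an approximate conjugating unitary, and then to exploit the fact that two such unitaries — one coming from $A_1$, the other from $A_2$ — both conjugate $\diag(I_n,0)$ to (approximately) $\diag(p,0)$, so their product approximately commutes with $\diag(I_n,0)$ and is therefore approximately block-diagonal.

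\smallskip

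First, I would construct, from the hypothesis that $p$ is homotopic to $\diag(I_n,0)$ as an $\eps$-$E$-projection inside $M_N({A_1}^+)$, an $\eps'$-$E'$-unitary $w_1$ in some $M_{N'}({A_1}^+)$ (with $\eps'=O(\eps)$ and $E'=O(E)$ controlled by a universal control pair) such that $w_1-I_{N'}$ lies in $M_{N'}(A_1)$, $w_1$ is connected to $I_{N'}$ by a homotopy of such unitaries and
\[
\|w_1^*\diag(I_n,0)w_1-\diag(p,0)\|<\lambda\eps.
\]
This is the standard ``spectral flow'' construction in quantitative $K$-theory: subdividing the projection homotopy $(p_t)_{t\in[0,1]}$ into finitely many steps with consecutive projections close enough to each other, one writes each step as conjugation by a short unitary obtained from functional calculus applied to $p_{t_{i+1}}p_{t_i}+(1-p_{t_{i+1}})(1-p_{t_i})$, and multiplies these together. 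The propagation of the resulting unitary is controlled by the propagation of the $p_t$'s; one uses Lemma \ref{lemma-almost-canonical-form} to reduce to the case $\rho_{A_1}(p_t)=\diag(I_n,0)$ so that $w_1-I_{N'}$ genuinely lies in $M_{N'}(A_1)$. The construction applied to the $A_2$-homotopy produces an analogous $w_2$ in $M_{N'}({A_2}^+)$, and up to taking inverses (which is controlled by Lemma \ref{cor-example-homotopy}) we may arrange
\[
\|w_2\diag(I_n,0)w_2^*-\diag(p,0)\|<\lambda\eps.
\]

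\smallskip

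Combining the two estimates (and using approximate unitarity of $w_1,w_2$) yields
\[
\bigl\|(w_1w_2)^{-1}\diag(I_n,0)(w_1w_2)-\diag(I_n,0)\bigr\|<\lambda'\eps,
\]
equivalently $\|\diag(I_n,0)(w_1w_2)-(w_1w_2)\diag(I_n,0)\|<\lambda'\eps$. Writing $w_1w_2$ in block form with respect to the decomposition $I_{N'}=\diag(I_n,0)+\diag(0,I_{N'-n})$ and denoting its diagonal blocks by $u\in M_n(A)$ and $v\in M_{N'-n}(A)$, a direct computation shows that the two off-diagonal blocks each have norm at most $\lambda'\eps$, hence $\|\diag(u,v)-w_1w_2\|\lq 2\lambda'\eps$. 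After folding all constants into a single control pair $(\lambda,h)$, this gives the four required unitaries $w_1,w_2,u,v$ with the claimed properties.

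\smallskip

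The main obstacle is Step~1, namely producing the conjugating unitaries $w_1,w_2$ from the given projection homotopies while simultaneously (i) keeping $w_i-I_{N'}$ inside $M_{N'}(A_i)$, (ii) maintaining a homotopy to the identity through $\lambda\eps$-$h_\eps E$-unitaries whose path stays in $M_{N'}(A_i^+)$, and (iii) controlling both the $\eps$-defect and the propagation by a universal control pair independent of $A$, $A_1$, $A_2$ and $N$. Once this quantitative ``path of projections yields a conjugating path of unitaries'' statement is established, the remainder is a short block-matrix computation.
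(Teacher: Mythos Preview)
The paper states this lemma without proof (it is imported from the quantitative $K$-theory machinery of \cite{oy2,oy4,dell}), so there is no in-paper argument to compare with. Your overall architecture is the right one: build for each $i$ a conjugating almost-unitary $w_i$ from the projection homotopy in $A_i^+$, observe that $w_1w_2$ then approximately commutes with $\diag(I_n,0)$, and read off the block-diagonal pieces $u,v$. Step~2 is indeed a short computation.

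There is, however, a genuine gap in your proposed construction of $w_i$. The ``subdivide the homotopy and multiply the short conjugating unitaries'' method produces a unitary whose propagation is of order $kE$, where $k$ is the number of subdivision points; and $k$ is governed by the modulus of continuity of $t\mapsto p_t$, which is \emph{not} bounded by the hypotheses. So this route cannot yield a \emph{universal} control pair $(\lambda,h)$, contrary to what the statement demands. You flag this as the main obstacle, but the method you describe does not overcome it.

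The missing idea is stabilisation via the standard symmetry trick (as in \cite[Corollary~1.31]{oy2}). For each $t$ the matrix
\[
V_t=\begin{pmatrix} p_t & I_N-p_t\\ I_N-p_t & p_t\end{pmatrix}
\]
is an $O(\eps)$-$E$-unitary (indeed an approximate symmetry) with $V_t\,\diag(I_N,0_N)\,V_t\approx\diag(p_t,I_N-p_t)$. Then $V_1V_0$ has propagation $2E$ --- independent of the homotopy --- conjugates $\diag(p_0,I_N-p_0)$ to $\diag(p_1,I_N-p_1)$, and is connected to $I_{2N}$ through $V_tV_0$, a homotopy of $O(\eps)$-$2E$-unitaries. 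Since $p_0=\diag(I_n,0)$ and $\rho_{A_i}(p_t)=\diag(I_n,0)$ (after the normalisation you already invoke), one has $V_tV_0-I_{2N}\in M_{2N}(A_i)$. This conjugates rank-$N$ projections; one further round of stabilisation and scalar permutations (passing to $M_{N'}$ with $N'\gq 2N$, rotating the $I_N-p$ summand away with a scalar rotation) brings this down to the desired $w_i^*\diag(I_n,0)w_i\approx\diag(p,0)$, still with propagation bounded by a fixed multiple of $E$. Once $w_1,w_2$ are obtained this way, your Step~2 goes through verbatim.
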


As a consequence,  we deduce   controlled  exactness at the range for  the quantitative boundary map associated to a controlled  Mayer-Vietoris pair.

\begin{proposition}\label{prop-bound2}For every positive number $c$, there exists a control pair $(\al,l)$ such that  for any coarse structure $\E$, any $\E$-filtered  $C^*$-algebra $A$,  any $E$ in $\E$   and
any $E$-controlled   Mayer-Vietoris pair $ (\De_1,\De_2,A_{\Delta_1},A_{\Delta_2})$  for $A$   with coercitivity $c$, then the 
 following holds:
 
 \smallbreak
 for any $\eps$ in $(0,\frac{1}{4\la\al_c})$ and any $E'$ in $\E$ with $k_{c,\la\eps} l_\eps E'\lq E$, any
 $y$ in $K_{*}^{\eps,E'}(A_{\De_1}\cap A_{\De_2})$ such that
 $$\jmath^{\eps,E'}_{\Delta_1,\De_2,*}(y)=0$$ in $K_{*}^{\eps,E'}(A_{\De_1})$ and
 $$\jmath^{\eps,E'}_{\Delta_2,\De_1,*}(y)=0$$ in $K_{*}^{\eps,E'}(A_{\De_2})$,
 there exists an element $x$ in $K_{*+1}^{\la\eps,l_\eps E'}(A)$ such
 that
 $$\partial^{\la\eps,l_\eps E'}_{\De_1,\De_2,*}(x)=
 \iota_*^{-,\al_c\la\eps,k_{c,\la\eps} l_\eps E'}(y)$$ in
 $K_{*}^{\al_c\la\eps,k_{c,\la\eps} l_\eps E'}(A_{\De_1}\cap A_{\De_2})$.
\end{proposition}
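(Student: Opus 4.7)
The plan is to invoke Lemma~\ref{lemma-exactness-inclusion}, whose conclusion packages exactly the data needed to run the definition of the boundary map $\partial$ in reverse. I first treat the case $* = 0$, producing $x \in K_1(A)$ from $y \in K_0(A_{\De_1} \cap A_{\De_2})$; the case $* = 1$ will follow by controlled Bott periodicity, since $\partial_0$ is defined from $\partial_1$ via that very isomorphism.

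By Lemma~\ref{lemma-almost-canonical-form}, applied after one universal rescaling of $\eps$, I may represent the given class as $y = [p, n]_{9\eps, E'}$ where $p$ is a $9\eps$-$E'$-projection in $M_N((A_{\De_1} \cap A_{\De_2})^+)$ satisfying $\rho(p) = \diag(I_n, 0)$. The hypotheses $\jmath_{\De_1, \De_2, *}^{\eps, E'}(y) = 0$ and $\jmath_{\De_2, \De_1, *}^{\eps, E'}(y) = 0$ yield, after a further controlled rescaling, homotopies of $p$ to $\diag(I_n, 0)$ through quasi-projections in $M_N(A_{\De_i}^+)$ for $i = 1, 2$, so that $p$ satisfies the hypothesis of Lemma~\ref{lemma-exactness-inclusion}. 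That lemma then furnishes an integer $N' \gq N$ together with $\la \eps$-$h_\eps E'$-unitaries $w_1, w_2 \in M_{N'}(A)$, $\tilde u \in M_n(A)$ and $\tilde v \in M_{N'-n}(A)$ with $w_i - I_{N'} \in M_{N'}(A_{\De_i})$, each $w_i$ null-homotopic through unitaries in $I_{N'} + M_{N'}(A_{\De_i})$, $\|\diag(\tilde u, \tilde v) - w_1 w_2\| < \la \eps$ and $\|w_1^* \diag(I_n, 0) w_1 - \diag(p, 0)\| < \la \eps$.

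Set $x \defi [\tilde u]$ in the appropriate quantitative $K_1$ group of $A$. To compute $\partial_1(x)$, use $\tilde v$ as the partner and $(w_1, w_2)$ as the decomposition in the definition of $\partial_1$: the individual null-homotopies of $w_1$ and $w_2$ combine into a null-homotopy of $w_1 w_2$, and the estimate $\|\diag(\tilde u, \tilde v) - w_1 w_2\| < \la \eps$ provides a null-homotopy of $\diag(\tilde u, \tilde v)$ itself at a slightly rescaled control, so that $(w_1, w_2)$ is a valid decomposition in the sense of Proposition~\ref{proposition-decom-unitaries}. Feeding this decomposition into Lemma~\ref{lemma-technic} produces a projection $q$ with $q - \diag(I_n, 0) \in M_{N'}((A_{\De_1} \cap A_{\De_2})^+)$ and $\|q - w_1^* \diag(I_n, 0) w_1\| < \la' \eps$, so that $\partial_1(x) = [q, n]$. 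The triangle inequality then gives $\|q - \diag(p, 0)\| < (\la + \la') \eps$, hence $[q, n] = [\diag(p, 0), n] = [p, n] = y$ at a suitably rescaled control, which is the desired equality. For $y \in K_1$, controlled Bott periodicity reduces the statement to the case just handled.

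The main obstacle is not conceptual but purely quantitative: one must track the rescalings produced at each invocation of Lemmas~\ref{lemma-almost-canonical-form}, \ref{lemma-technic}, \ref{lemma-exactness-inclusion}, Proposition~\ref{proposition-decom-unitaries} and the controlled Bott isomorphism, and combine them into a single universal control pair $(\al, l)$ depending only on the coercivity $c$ of the Mayer-Vietoris pair. This bookkeeping is what determines both the final admissible range of $\eps$ and the propagation constraint $k_{c, \la \eps} l_\eps E' \lq E$ appearing in the statement.
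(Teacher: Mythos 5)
Your proposal is correct and follows essentially the route the paper intends: the paper states Proposition~\ref{prop-bound2} precisely as a consequence of Lemma~\ref{lemma-exactness-inclusion}, and your argument (standard form via Lemma~\ref{lemma-almost-canonical-form}, the vanishing hypotheses giving the two homotopies needed as input, Lemma~\ref{lemma-exactness-inclusion} producing $w_1,w_2,\tilde u,\tilde v$, then Lemma~\ref{lemma-technic} plus the closeness estimate identifying $\partial_1([\tilde u])$ with $[p,n]$, and controlled Bott periodicity for the other parity) is exactly how that deduction is meant to go. The only point to keep in mind is that your computation of $\partial_1([\tilde u])$ uses $\tilde v$ rather than $\tilde u^*$ as the partner, so it rests on the well-definedness of the boundary map with respect to that choice, which is part of the machinery already invoked in the construction of $\partial^{\bullet,\bullet}_{\De_1,\De_2,*}$.
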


  \begin{example}
  With notations of Theorem \ref{theorem-mvpair}, we denote by
      $$\jmath_{1,2,A}:A\rtr(\H_1\cap \H_2)\hookrightarrow  A\rtr\H_1$$
      $$\jmath_{2,1,A}:A\rtr(\H_1\cap \H_2)\hookrightarrow A\rtr\H_2$$      
      $$\jmath_{1,A}:A\rtr \H_1\hookrightarrow A\rtr\H$$
      $$\jmath_{2,A}:A\rtr \H_2 \hookrightarrow A\rtr\H$$   the obvious inclusions
      and  for any $\eps$ in $(0,1)$ and any  $\r_0$ in $\E$ such that $k_{c,\eps}\r_0\subseteq \r$, we denote  by 
      $$\partial_{\H_1,\H_2,A,*}^{\eps,\r_0}=\partial_{A\rtr \H_1,A\rtr \H_2,*}^{\eps,\r_0}:
      K_1^{\eps,\r_0}(A\rtr\H)\to K_0^{\al_c\eps,k_{c,\eps}\r_0}(A\rtr(\H_1\cap\H_2))$$ 
      $\eps$-$\r_0$-quantitative Mayer-Vietoris boundary map associated to the 
     $\r_0$-controlled  Mayer-Vietoris pair        $$(A\rtr \r_{V_1},A\rtr\r_{V_2},A\rtr\H_1,A\rtr\H_2).$$ \end{example}

\section{Statement of the main result}
  \begin{theorem}\label{theorem-main}
  Let $\G$  be a  locally  compact groupoid  provided with a Haar system 
  and let $f:A\to B$ be a homomorphism of $\G$-algebras. Let us assume that there exists a  subset $\D$ of relatively clopen groupoids of $\G$, closed under taking relatively  clopen subgroupoids and  such that
  \begin{enumerate}
  \item $f_{\H,*}:K_*(A\rtr\H)\to K_*(B\rtr\H)$ is an isomorphism for any $\H$ in $\D$  
    \item $\G$ has finite $\D$-complexity.
  \end{enumerate}
  Then  $$f_{\G,*}:K_*(A\rtr\G)\to K_*(B\rtr\G)$$  is an isomorphism.
  \end{theorem}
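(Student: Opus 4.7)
The plan is to establish a controlled, quantitative version of the conclusion and then to transfer back to ordinary $K$-theory via Lemma~\ref{lemma-approx}. For each open subgroupoid $\H$ of $\G$ I will consider the property $\mathcal{Q}(\H,\la,h)$: \emph{$f$ is a $(\la,h)$-controlled isomorphism at $\H$}, meaning that for every $\eps$ in $(0,\frac{1}{4\la})$ and every $\G$-order $\r$, every $y$ in $K_*^{\eps,\r}(B\rtr\H)$ is in the image of $f_{\H,*}^{\la\eps,h_\eps\r}$ up to the structure map, and every $x$ in $K_*^{\eps,\r}(A\rtr\H)$ sent to $0$ in $K_*(B\rtr\H)$ is already $0$ in $K_*^{\la\eps,h_\eps\r}(A\rtr\H)$. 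By the two parts of Lemma~\ref{lemma-approx}, once $\mathcal{Q}(\G,\la,h)$ holds for some fixed pair $(\la,h)$, the classical map $f_{\G,*}$ is automatically an isomorphism, so it will suffice to produce such a pair.

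The base case is to handle $\H\in\D$. The assumed isomorphism $f_{\H,*}$ combined with both parts of Lemma~\ref{lemma-approx} applied to $A\rtr\H$ and $B\rtr\H$ immediately gives $\mathcal{Q}(\H,\la_0,h_\D)$, where $\la_0$ is the universal constant of Lemma~\ref{lemma-approx} and $h_\D$ comes from a direct lifting argument. By Lemma~\ref{lemma-relatively-clopen}, every subgroupoid encountered in any subsequent coarse decomposition will again be relatively clopen, so the Mayer-Vietoris machinery of Subsection 3.4 will apply uniformly at every stage.

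The inductive step is where the main difficulty lies. Assume $\H$ admits, at some order $\r'$ with $\r^{*6}\subseteq \r'$, a coercive $\r'$-decomposition $(V_1,V_2,\H_1,\H_2)$ for which $\mathcal{Q}(\H_1,\la,h)$, $\mathcal{Q}(\H_2,\la,h)$ and $\mathcal{Q}(\H_1\cap\H_2,\la,h)$ all hold. By Theorem~\ref{theorem-mvpair} the quadruple $(A\rtr\r_{V_1},A\rtr\r_{V_2},A\rtr\H_1,A\rtr\H_2)$ is then an $\r$-controlled Mayer-Vietoris pair of coercivity $2$ for $A\rtr\G$, and similarly for $B$; since $f_\G$ is filtered and compatible with both pairs, the naturality identity \eqref{equ-boundary-natural} will produce a commutative ladder linking the two controlled six-term Mayer-Vietoris exact sequences built from Proposition~\ref{prop-half-exact}, Proposition~\ref{prop-bound2} and the quantitative boundary $\partial_{\Delta_1,\Delta_2,*}^{\eps,\r_0}$. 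A controlled five-lemma diagram chase along this ladder, applying $\mathcal{Q}$ at the four adjacent corners and invoking controlled exactness at the source (Lemma~\ref{lemma-bound1}) and at the range (Lemma~\ref{lemma-exactness-inclusion}), will then produce $\mathcal{Q}(\H,\la',h')$ with $(\la',h')$ depending only on $(\la,h)$ and on the coercivity $2$. The hard part will be the bookkeeping of the four successive control-pair rescalings that the five-lemma argument imposes, together with the matching of the nested orders $\r,\r_0,\r'$ through the $k_{c,\eps}$ factors coming from the controlled boundary map; this is exactly the tedious but conceptually routine part mentioned in the introduction.

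To conclude, I would stratify $\widehat{\D}$ by setting $\D_0=\D$ and $\D_{n+1}=\{\H\ :\ \H\text{ is }\D_n\text{-decomposable}\}$, so that $\widehat{\D}=\bigcup_n\D_n$; the $\D$-fdc hypothesis then provides $N$ with $\G\in\D_N$. The closure of $\D$ under relatively clopen subgroupoids, together with the fact (already used in Corollary~\ref{corollary-intersection}) that the intersection of two relatively clopen subgroupoids is relatively clopen, ensures that the inductive step can be applied at every stage with all four of $\H_1,\H_2$ and $\H_1\cap\H_2$ lying in the previous stratum. Iterating the inductive step $N$ times starting from the base control pair $(\la_0,h_\D)$ yields a final control pair $(\la_N,h_N)$ for which $\mathcal{Q}(\G,\la_N,h_N)$ holds, and Lemma~\ref{lemma-approx} then delivers the desired isomorphism $f_{\G,*}:K_*(A\rtr\G)\to K_*(B\rtr\G)$.
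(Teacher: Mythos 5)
Your plan rests on propagating a \emph{uniform} controlled statement $\mathcal{Q}(\H,\la,h)$ through the decomposition hierarchy, and its base case is where the argument breaks. For $\H$ in $\D$ the hypothesis is only the classical isomorphism $f_{\H,*}$, and Lemma \ref{lemma-approx} converts this into quantitative information strictly element by element: given a class $x$ in $K_*^{\eps,\r}(A\rtr\H)$ killed in $K_*(B\rtr\H)$, or a class $y$ to be lifted, it produces \emph{some} larger order $\r'$ depending on that particular class (and on $\H$), not an order of the form $h_\eps\cdot\r$ with $h$ a function of $\eps$ alone. A uniform statement of QI/QS type, with a control pair valid simultaneously for all classes of a given propagation, is genuinely stronger than the classical isomorphism and does not follow from it "by a direct lifting argument"; so $\mathcal{Q}(\H,\la_0,h_\D)$ is not available at the start, and consequently the four uniform hypotheses you want to feed into the controlled five-lemma at each decomposition are never established. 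If you repair this by weakening $\mathcal{Q}$ to its element-wise form (for each class there exists \emph{some} larger order), then by Lemma \ref{lemma-approx} your property becomes equivalent to the classical isomorphism, and your inductive step is exactly the lemma the paper proves after Theorem \ref{theorem-main}: there the quantitative machinery (Theorem \ref{theorem-mvpair}, Proposition \ref{proposition-decom-unitaries}, Lemmas \ref{lemma-bound1} and \ref{lemma-exactness-inclusion}, naturality \eqref{equ-boundary-natural}) is used only \emph{inside} a single decomposition step, with orders chosen per element via Lemma \ref{lemma-approx}, while the statement that gets propagated is the order-free classical one.

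A second, related gap is the stratification $\widehat{\D}=\bigcup_n\D_n$ with $\G\in\D_N$ for a finite $N$. An $\r$-decomposition is allowed to depend on $\r$, so a subgroupoid that is $\bigcup_n\D_n$-decomposable may need pieces from strata of unbounded index as $\r$ grows; hence $\bigcup_n\D_n$ need not be closed under coarse decompositions, and $\widehat{\D}$ (defined by minimality, possibly requiring transfinite iteration) can exceed it. Thus $\D$-fdc does not hand you a finite $N$ along which to iterate, and even if it did, your scheme degrades the control pair at every level, so it could not survive an unbounded or transfinite closure. The paper's formulation sidesteps both problems at once: the set of relatively clopen subgroupoids $\H$ such that $f_{\H',*}$ is an isomorphism for every relatively clopen subgroupoid $\H'$ of $\H$ contains $\D$ and is closed under coarse decompositions (that is the content of the lemma), hence contains $\widehat{\D}$ and in particular $\G$; since the propagated property carries no control pair, no bookkeeping across levels is needed.
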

   
  Theorem \ref{theorem-main} is a consequence of  the following result:
  
   \begin{lemma}
  Let $\G$  be a  locally   groupoid  provided with a Haar system, let $\H$ be a relatively clopen subgroupoid of $\G$,
  let $f:A\to B$ be a homomorphism of $\G$-algebras. Let us assume that there exists a  subset $\D$ of relatively clopen groupoids of $\G$, closed under taking relatively  clopen subgroupoids and  such that
  \begin{enumerate}
  \item $f_{\H',*}:K_*(A\rtr\H')\to K_*(B\rtr\H')$ is an isomorphism for any $\H'$ in $\D$.  
    \item $\H$ is $\D$-decomposable.
  \end{enumerate}
  Then  $$f_{\H,*}:K_*(A\rtr\H)\to K_*(B\rtr\H)$$  is an isomorphism.
  \end{lemma}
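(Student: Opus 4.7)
The proof is a ``controlled five-lemma'' diagram chase applied to the controlled Mayer--Vietoris exact sequence of Subsection \ref{subsect-mv}, combined with the approximation/persistence principle of Proposition \ref{proposition-approximation-K-th} (applied here as Lemma \ref{lemma-approx}).

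Setting up the geometry, I would first remark that in any $\r$-decomposition $(V_1,V_2,\H_1,\H_2)$ of $\H$ with $\H_1,\H_2 \in \D$, the intersection $\H_1 \cap \H_2$ is again in $\D$. Indeed, writing $Y_i$ for the unit space of $\H_i$ and $Y=Y_1\cap Y_2$, the subgroupoid $\H_1\cap\H_2$ is open in $\G$ and closed in $\G_Y=\G_{Y_1}\cap\G_{Y_2}$ as an intersection of subsets closed in the respective $\G_{Y_i}$; it is therefore relatively clopen in $\G$, hence a relatively clopen subgroupoid of $\H_1$, and the closure hypothesis on $\D$ places it in $\D$. Thus $f_{\H_1\cap\H_2,*}$ is an isomorphism as well. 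Given any $\G$-order $\r$, by Remark \ref{remark-G-order} I choose a $\G$-order $\widetilde\r$ containing $\r^{*6}$, and $\D$-decomposability yields a coercive $\widetilde\r$-decomposition $(V_1,V_2,\H_1,\H_2)$ of $\H$ with $\H_1,\H_2\in\D$; Theorem \ref{theorem-mvpair} then produces two coercive $\r$-controlled Mayer--Vietoris pairs
\[
(A\rtr\r_{V_1},A\rtr\r_{V_2},A\rtr\H_1,A\rtr\H_2)\quad\text{and}\quad(B\rtr\r_{V_1},B\rtr\r_{V_2},B\rtr\H_1,B\rtr\H_2)
\]
of coercivity $2$, and the homomorphism $f$ induces a morphism between them.

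For surjectivity of $f_{\H,*}$, given $y\in K_*(B\rtr\H)$, I would apply Lemma \ref{lemma-approx}(i) to write $y=\iota_*^{\eps,\r}(x)$ for some $x\in K_*^{\eps,\r}(B\rtr\H)$, form the quantitative boundary $\partial x\in K_{*+1}^{\alpha_c\eps,k_{c,\eps}\r}(B\rtr(\H_1\cap\H_2))$, and lift $\iota_*(\partial x)$ to $K_{*+1}(A\rtr(\H_1\cap\H_2))$ using the isomorphism hypothesis on $\H_1\cap\H_2$. Approximating this lift by a quantitative class (Lemma \ref{lemma-approx}(i)), adjusting via persistence (Lemma \ref{lemma-approx}(ii)) and using naturality \eqref{equ-boundary-natural}, I get a quantitative class in $K_{*+1}^{\cdot,\cdot}(A\rtr(\H_1\cap\H_2))$ whose image under $f_*$ equals $\partial x$ up to further rescaling. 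Proposition \ref{prop-bound2} then produces $x'\in K_*^{\cdot,\cdot}(A\rtr\H)$ with $\partial x'$ matching this preimage, so $\partial(f_*(x')-x)$ vanishes after persistence; the corollary following Lemma \ref{lemma-bound1} lets me write $f_*(x')-x$ as the image of some element of $K_*^{\cdot,\cdot}(B\rtr\H_1)\oplus K_*^{\cdot,\cdot}(B\rtr\H_2)$, which I lift via the iso hypothesis on $f_{\H_1,*}$ and $f_{\H_2,*}$; combining with $x'$ yields a preimage of $y$ in $K_*(A\rtr\H)$. Injectivity follows from an analogous chase in the opposite direction, using Proposition \ref{prop-half-exact} (controlled exactness at the middle) at the final step to clear the class in $K_*^{\cdot,\cdot}(A\rtr\H_1)\oplus K_*^{\cdot,\cdot}(A\rtr\H_2)$.

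The main obstacle is the bookkeeping. Each invocation of a controlled exact sequence, of Lemma \ref{lemma-approx}, or of the isomorphism hypothesis translated to quantitative $K$-theory costs a fixed rescaling $(\eps,\r)\rightsquigarrow(\lambda\eps,h_\eps\cdot\r)$ by control pairs depending only on the fixed coercivity $c=2$ of Theorem \ref{theorem-mvpair}. The chase must be organised so that a sufficiently small starting $\eps$ lies inside every admissibility window $(0,1/4\alpha)$, $(0,1/4\alpha_c)$, $(0,1/4\lambda)$ appearing in Propositions \ref{prop-half-exact}, \ref{prop-bound2}, and the corollary after Lemma \ref{lemma-bound1}, and so that the successive enlargements of $\r$ remain $\G$-orders (automatic since $\E_\G$ is a lattice closed under $\ast$). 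Once this is arranged, writing $K_*(A\rtr\H)=\varinjlim K_*^{\eps,\r}(A\rtr\H)$ along the structure maps absorbs the rescalings and delivers the asserted isomorphism.
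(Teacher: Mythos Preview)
Your outline follows the same controlled five-lemma strategy as the paper, and the observation that $\H_1\cap\H_2\in\D$ is correct and needed. But there is a genuine gap in the bookkeeping that your last paragraph does not resolve.

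You write that each invocation of Lemma \ref{lemma-approx} or of the isomorphism hypothesis ``costs a fixed rescaling $(\eps,\r)\rightsquigarrow(\lambda\eps,h_\eps\cdot\r)$''. This is false for the persistence step Lemma \ref{lemma-approx}(ii): the order $\r'$ it returns depends on the \emph{element}, not on a universal control pair. The same is true when you translate the isomorphism hypothesis on $f_{\H',*}$ into a quantitative statement. Consequently, after you lift $\partial x$ through $f_{\H_1\cap\H_2,*}$ and push through persistence, the resulting class sits at a propagation $\r'$ that you cannot bound in advance by the order of the Mayer--Vietoris pair you fixed at the start. Proposition \ref{prop-bound2} and Proposition \ref{prop-half-exact} both carry the constraint that the input propagation $E'$ satisfy $l_\eps E'\lq E$ (the order of the pair), so they are no longer available at this stage. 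You cannot simply re-choose a decomposition at a larger order either, because a $\widetilde\r$-decomposition need not remain a $\widetilde\r''$-decomposition for $\widetilde\r''\supsetneq\widetilde\r$, and in any case the class you are carrying lives in $K_*(A\rtr(\H_1\cap\H_2))$ for the \emph{old} $\H_1,\H_2$.

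This is precisely why the paper does not use the packaged propositions in the chase. Instead it works at the level of explicit representatives and invokes Lemma \ref{lemma-bound1} and Lemma \ref{lemma-exactness-inclusion} directly: these lemmas require only that $A_{\De_1},A_{\De_2}$ be filtered subalgebras, with no order constraint tied to a Mayer--Vietoris pair, so they apply at the enlarged propagations produced by persistence. The Mayer--Vietoris pair itself is used only once, to compute the boundary of the \emph{initial} class at the small propagation $\r_0$ (which does sit inside the pair's order). A second, smaller omission: in your surjectivity step you invoke Proposition \ref{prop-bound2} without checking its hypothesis that the lifted class vanishes in each $K_*(A\rtr\H_i)$; the paper handles this explicitly (Step I of the surjectivity argument) using $\jmath_{i,2,*}\circ\partial=0$ together with injectivity of $f_{\H_i,*}$.
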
  
  \begin{proof}
  By Bott periodicity, this amounts to prove   that
   $$f_{\H,*}:K_1(A\rtr\H)\to K_1(B\rtr\H)$$  is an isomorphism. Let $$\widetilde{f_\H}:\widetilde{A\rtr\H}\lto \widetilde{B\rtr\H}$$ be the unitarisation of
     $f_\H$ with  $\widetilde{f_\H}$, $\widetilde{A\rtr\H}$ and  $\widetilde{B\rtr\H}$ respectively equal to 
      ${f_\H}$, ${A\rtr\H}$ and  ${B\rtr\H}$     if $f_\H$ is already a morphism of unital $C^*$-algebras and   ${f^+_\H}$, ${(A\rtr\H})^+$ and  $({B\rtr\H})^+$             else.      Let us fix  a control pair $(\la,l)$ such that 
      \begin{itemize}
      \item $\la\geqslant \la_0$, where $\la_0$ is the constant of Lemma \ref{lemma-approx}; 
      \item $(\la,l)$ is larger than \begin{itemize}
      \item the control pair corresponding to the quantitative boundary map associated to a coarse  Mayer-Vietoris pair with coercitivity $c=2$ (see Section \ref{subsect-mv}), 
      \item the control pairs of Proposition \ref{proposition-decom-unitaries} and of  Lemmas \ref{lemma-bound1}  and   \ref{lemma-exactness-inclusion}.\end{itemize}
      \end{itemize}
           We proceed by using a quantitative version of the five lemma.
     
     \smallskip 
     
\subsection{Injectivity part}   Let $x$ be an element in $K_1(A\rtr \H)$ such that $f_{\H,*}(x)=0$ in 
     $K_1(B\rtr \H)$.  Let us show then that $x=0$. We divide the proof  in five steps.\subsubsection*{Step I} Let us fix a positive number $\eps$ in $(0,\frac{1}{256\la^5})$.
      According to Lemma \ref{lemma-approx}, there exist up to stabilisation a $\G$-order $\r_0$ and an $\eps$-$\r_0$-unitary in $1+{A\rtr\H}$ such that
     \begin{itemize}
     \item $\iota_*^{\eps,\r_0}([u]_{\eps,\r_0})=x$;
     \item  $[\widetilde{f_\H}(u)]_{\eps,\r_0}=0$ in $K_1^{\eps,\r_0}(B\rtr H)$.
     \end{itemize}
     Let $(V_1,V_2,\H_1,\H_2)$ be 
     a  $(6l_\eps\cdot \r_0)$-decomposition of $\H$ with $\H_1$ and $\H_2$ in $\D$. In view of Theorem \ref{theorem-mvpair}, we see that
     $$(A\rtr \r_{V_1},A\rtr\r_{V_2},A\rtr\H_1,A\rtr\H_2)$$ is a $ l_\eps\cdot \r_0$-controlled  Mayer-Vietoris pair  
       relatively to $A\rtr\H$ and 
      $$(B\rtr \r_{V_1},B\rtr\r_{V_2},B\rtr\H_1,B\rtr\H_2)$$ is quantitative Mayer-Vietoris pair of order
      $l_\eps\cdot\r_0$ relatively to $B\rtr\H$.   For seek of simplicity, we rescale $(\al_c,k_c)$ to be equal to  $(\la,l)$.     
      
                According to Proposition \ref{proposition-decom-unitaries} applied with coercity $c=2$, there exists up to stabilisation two
       $\la\eps$-$(l_{\eps}\cdot \r_0)$-unitaries $v_1$ and $v_2$ in $\widetilde{B\rtr\H}$ such that
       
       \begin{itemize}
       \item $v_i-1$ is in $B\rtr\H_i$ for $i=1,2$;
       \item $v_i$ is homotopic to $1$ as an $\la\eps$-$(l_{\eps}\cdot \r_0)$-unitaries in   $\widetilde{B\rtr\H_i}$  for $i=1,2$;
      \item $\|\widetilde{f_\H}(u)-v_1v_2\|\lq\la\eps$.
       \end{itemize}
       
\subsubsection*{Step II}
       By naturality of the quantitative Mayer-Vietoris boundary map (see Equation \eqref{equ-boundary-natural} of Section \ref{subsect-mv}),  we have 
       \begin{eqnarray*}
        f_{\H_1\cap\H_2,*}^{\la\eps, l_{\eps}\cdot \r_0 }\circ  \partial_{\H_1,\H_2,A,*}^{\eps,\r_0}([u]_{\eps,\r_0})
        &=&  \partial_{\H_1,\H_2,B,*}^{\eps,\r_0}\circ f_{\H,*}^{\eps,\r_0} ([u]_{\eps,\r_0})\\
                                  &=& \partial_{\H_1,\H_2,B,*}^{\eps,\r_0}([\widetilde{f_\H}(u)]_{\eps,\r_0})\\
            &=& 0.
       \end{eqnarray*}
             In particular, 
         \begin{eqnarray*}   f_{\H_1\cap\H_2,*}\circ \iota_*^{\la\eps,   l_{\eps}\cdot \r_0}\circ    \partial_{\H_1,\H_2,A,*}^{\eps,\r_0}([u]_{\eps,\r_0})&=&
      \iota_*^{\la\eps,   l_{\eps}\cdot \r_0}   \circ  f_{\H_1,\H_2,A,*}^{\la\eps,   l_{\eps}\cdot \r_0} \circ    \partial_{\H_1,\H_2,A,*}^{\eps,\r_0}([u]_{\eps,\r_0} )\\
      &=&0,   
  \end{eqnarray*}
  and since   $f_{\H_1\cap\H_2,*}$ is injective, we deduce from Lemma \ref{lemma-approx}   that there exists a compact $\G$-order $\r$ containing   $l_{\eps}\cdot \r_0$ such that
$$ \iota_*^{-, \la^2\eps,\r}\circ    \partial_{\H_1,\H_2,A,*}^{\eps,\r_0}([u]_{\eps,\r_0})=0.$$ 
\subsubsection*{Step III}
  According to Lemma \ref{lemma-bound1}, up to stabilisation and up to replacing $\r$ by $l_{\la^2\eps} \cdot \r$, there exists
  two   $\la^3\eps$-$\r$-unitaries $w_1$ and $w_2$ in $\widetilde{A\rtr\H}$ such that   
  \begin{itemize}
  \item $w_i-1$ is in $A\rtr\H_i$ for $i=1,2$;
  \item $\|u-w_1w_2\|< \la^3\eps$.
  \end{itemize}
  In particular, according to the first point of Lemma \ref{cor-example-homotopy}, we have
  \begin{equation}\label{equation-u}
[u]_{3 \la^3\eps,2\cdot\r}=\jmath_{1,A,*}^{3\la^3\eps,2\cdot\r}([w_1]_{3 \la^3\eps,2\cdot\r})+\jmath_{2,A,*}^{3 \la^3\eps,2\cdot\r}([w_2]_{3 \la^3\eps,2\cdot\r})\end{equation}
in $K_1^{3 \la^3\eps,2\cdot\r}(A\rtr\H)$.  
Moreover, we have
\begin{equation*}
\|v_1v_2-\widetilde{f_\H}(w_1)\widetilde{f_\H}(w_2)\|< 2 \la^3\eps  
\end{equation*}
and in consequence,
\begin{equation*}
\|v_1^*\widetilde{f_\H}(w_1)-v_2\widetilde{f_\H}(w^*_2)\|< 8\la^3\eps.  
\end{equation*}
The CIA-condition with coercivity $c=2$ implies that up to replace $\r$ by $2\cdot\r$, there exists an element $v$ in $1+B\rtr\r_{/\H_1\cap\H_2}$  such that
\begin{equation*}\label{equation-CIA-1}
\|v-v_1^*\widetilde{f_\H}(w_1)\|< 16\la^3 \eps.   \end{equation*}
 and
 \begin{equation*}\label{equation-CIA-2}
\|v-v_2\widetilde{f_\H}^*(w_2)\|< 16 \la^3\eps.   \end{equation*}   
In particular,
$v$ is a $\la'\eps$-unitary with $\la'=64\la^3$. Moreover, $v$ is homotopic to $v_1^*\widetilde{f_\H}(w_1)$ as a
$\la'\eps$-$\r$-unitary in $B\rtr\H_1$ and homotopic to $v_2\widetilde{f_\H}^*(w_2)$ as a
$\la'\eps$-$\r$-unitary in $B\rtr\H_2$.
By  surjectivity of $f_{\H_1\cap\H_2,*}$ and in view of Lemma \ref{lemma-approx}, there exists a compact $\G$-order $\r'$ containing $\r$ and an element
$z$ in $K_1^{\la\la'\eps,\r'}(A\rtr(\H_1\cap\H_2))$ such that
$$f_{\H_1\cap\H_2,*}^{\la\la'\eps,\r'}(z)=[v]_{\la\la'\eps,\r'}$$  in $K_1^{\la\la'\eps,\r'}(B\rtr(\H_1\cap\H_2))$.
\subsubsection*{Step IV}
Let us set $$z_1=\jmath_{1,2,A,*}^{\la\la'\eps,\r'}(z)$$ and
$$z_2=\jmath_{2,1,A,*}^{\la\la'\eps,\r'}(z).$$ 
 We deduce from the discussion at the end of  the  previous  step that
\begin{eqnarray*}
f_{\H_1,*}\circ\iota^{\la\la'\eps,\r'}_*(z_1)&=&\iota^{\la\la'\eps,\r'}_*\circ f_{\H_1,*}^{\la\la'\eps,\r'}(z_1)\\
&=& \iota^{\la\la'\eps,\r'}_*([v_1^*\widetilde{f_{\H_1}}(w_1)]_{\la\la'\eps,\r'})\\
&=& \iota^{\la\la'\eps,\r'}_*([\widetilde{f_{\H_1}}(w_1)]_{\la\la'\eps,\r'})\\
&=&f_{\H_1,*}\circ  \iota^{\la\la'\eps,\r'}_*([w_1]_{\la\la'\eps,\r'}),
\end{eqnarray*}where the third equality holds because    $v_1$ is homotopic to $1$ as a  $\la\eps$-$l_{\eps}\cdot \r_0$-unitary in   $\widetilde{B\rtr\H_1}$. 
Since $f_{\H_1,*}$ is one-to-one, we get that 
$$\iota^{\la\la'\eps,\r'}_*(z_1)=\iota^{\la\la'\eps,\r'}_*([w_1]_{\la\la'\eps,\r'})$$ and similarly, 
$$\iota^{\la\la'\eps,\r'}_*(z_2)=-\iota^{\la\la'\eps,\r'}_*([w_2]_{\la\la'\eps,\r'}).$$ 
\subsubsection*{StepV}
According to Lemma \ref{lemma-approx}, there exists a compact $\G$-order $\r''$ containing $\r'$ and such that
$$\iota^{-,\la^2\la'\eps,\r''}_*(z_1)= [w_1]_{\la^2\la'\eps,\r''}$$ and 
$$\iota^{-,\la^2\la'\eps,\r''}_*(z_2)=- [w_2]_{\la^2\la'\eps,\r''}$$
From  Equation \eqref{equation-u}, we deduce
\begin{eqnarray*}
[u]_{\la^2\la'\eps,\r''}&=&
 \jmath^{\la^2\la'\eps,\r''}_{1,A,*}\circ \iota^{-,-}_*(z_1)- \jmath^{\la^2\la'\eps,\r''}_{2,A,*}\circ \iota^{-,-}_*(z_2)\\
&=&  \iota^{-,\la^2\la'\eps,\r''}_*\circ\jmath^{\la^2\la'\eps,\r''}_{1,A,*}(z_1)- \iota^{-,\la^2\la'\eps,\r''}_*\circ\jmath^{\la^2\la'\eps,\r''}_{2,A,*}(z_2)\\
&=&  \iota^{-,\la^2\la'\eps,\r''}_*\circ(\jmath^{\la^2\la'\eps,\r''}_{1,A,*}\circ \jmath_{1,2,A,*}^{\la\la'\eps,\r'}-\jmath^{\la^2\la'\eps,\r''}_{2,A,*}\circ \jmath_{2,1,A,*}^{\la_0\la'\eps,\r'})(z)\\
&=&0
\end{eqnarray*}
and hence
$$x=\iota_*^{\la^2\la'\eps,\r''}[u]_{\la^2\la'\eps,\r''}=0.$$

\medskip

\subsection{Surjectivity part} Let us set $\al_0=9\la^5$. In view of   Lemma \ref{lemma-approx}, let us  prove that for every $\eps$ in
$(0,\frac{1}{4 \al_0})$, any $\G$-order $\r_0$ and any $y$ in $K_1^{\eps,\r_0}(B\rtr G)$ , there exists a compact $\G$-order $\r_1$ containing $\r_0$ and
$x$ an element in $K_1^{\al_0\eps,\r_1}(A\rtr G)$ such that $f_{\H,*}^{\eps,\r_1}(x)=\iota_{*}^{\eps,\al_0\eps,\r_0,\r_1}(y)$ in $K_1^{\al_0\eps,\r_1}(B\rtr G)$.    We divide this proof in four steps.
\subsubsection*{Step I}
Up to replacing $\r_0$ by $21\eps$ and in view the first point of Lemma \ref{lemma-almost-canonical-form-odd}, we can assume that there exists an $\eps$-$\r_0$-unitary $u'$ in  $I_{N'}+M_{N'}({B\rtr\H})$ for some integer $N'$ such that 
 $y=[u']_{\eps,\r_0}$.
 Let $(V_1,V_2,\H_1,\H_2)$ be 
     a  $({6l_{\eps}}\cdot\r_0)$-decomposition for $\H$ with $\H_1$ and $\H_2$ in $\D$.  Since $f_{\H_1\cap\H_2,*}$ is onto and according to Lemma \ref{lemma-approx}, there exists a compact $\G$-order $\r$ containing $l_{\eps}\cdot\r_0$ and an element $x_{1,2}$ in $K_0^{\la^2\eps,\r}(A\rtr(\H_1\cap\H_2))$ such that
     \begin{equation*}
     f_{ \H_1\cap\H_2,*}^{\la^2\eps,\r}(x_{1,2})=\iota_*^{-,\la^2\eps,\r}\circ\partial_{\H_1,\H_2,B,*}^{\eps,\r_0}(y).
     \end{equation*}  
     Let us set $x_1=\jmath_{1,2,A,*}^{\la^2\eps,\r}(x_{1,2})$ in $K_0^{\la^2\eps,\r}(A\rtr\H_1)$ and 
     $x_2=\jmath_{2,1,A,*}^{\la^2\eps,\r}(x_{1,2})$ in $K_0^{\la^2\eps,\r}(A\rtr\H_2)$.    
     Then we have 
     \begin{eqnarray*}
     f_{\H_1,*}\circ \iota_*^{\la^2\eps,\r}(x_1)&=&\iota_*^{\la^2\eps,\r}\circ  f_{\H_1,*}^{\la^2\eps,\r}(x_{1})\\
     &=&\iota_*^{\la^2\eps,\r}\circ  f_{\H_1,*}^{\la^2\eps,\r}\circ \jmath_{1,2,A,*}^{\la^2\eps,\r}(x_{1,2})\\  
     &=&\iota_*^{\la^2\eps,\r}\circ \jmath_{1,2,B,*}^{\la^2\eps,\r}\circ  f_{\H_1\cap\H_2,*}^{\la^2\eps,\r}(x_{1,2})\\       
      &=&\iota_*^{\la^2\eps,\r}\circ \jmath_{1,2,B,*}^{\la^2\eps,\r}\circ 
        \iota_*^{-,-}\circ\partial_{\H_1,\H_2,B,*}^{\eps,\r_0}(y)\\
        &=&\iota_*^{\la^2\eps,\r}\circ 
        \iota_*^{-,-}\circ \jmath_{1,2,B,*}^{\la_\eps,l_{\eps}\r_0}\circ\partial_{\H_1,\H_2,B,*}^{\eps,\r_0}(y)\\
        &=&0     
           \end{eqnarray*} and in the same way  $f_{\H_2,*}\circ \iota_*^{\la^2\eps,\r}(x_2)= 0$.  
           Since $f_{\H_1,*}$ and $f_{\H_2,*}$ are one-to-one, we deduce that 
         $\iota_*^{\la^2\eps,\r}(x_1)=0$     and    $\iota_*^{\la^2\eps,\r}(x_2)=0$. According to Lemma \ref{lemma-approx}, there exists a compact $\G$-order $\r'$ containing $\r$ and such that
         \begin{equation*}
         \jmath_{1,2,A,* }^{\la^3\eps,\r'}\circ\iota_*^{\la^2\eps,\r,-}(x_{1,2})=0
         \end{equation*}
         and 
         \begin{equation*}
         \jmath_{2,1,A,* }^{\la^3\eps,\r'}\circ\iota_*^{\la^2\eps,\r,-}(x_{1,2})=0.
         \end{equation*}     
         \subsubsection*{Step II}
         In view of Lemma    \ref{lemma-almost-canonical-form}, there exists for some positive integers $n$ and $N$ with $n\leqslant N$  a $9\la^2\eps$-$\r$-projection  in $\diag(I_n,0)+M_N({A\rtr\H})$ such that 
         \begin{equation*}
         \iota_*^{\la^2\eps,9\la^2\eps,\r}(x_{1,2})=[p,n]_{9\la^2\eps,\r}
         \end{equation*}       
         in $K_0^{9\la^2\eps,\r}(A\rtr(\H_1\cap\H_2))$.            According to Lemma 
       \ref{lemma-exactness-inclusion} and up to stabilisation, there exists four $9\la^3\eps$-$l_{9\la^2\eps}\r$-unitaries, $v_1$ and $v_2$ in
       $I_N+M_N({A\rtr\H})$, $u_1$ in $M_n(\widetilde{A\rtr\H})$ and $u_2$ in $M_{N-n}(\widetilde{A\rtr\H})$ such that
       \begin{itemize}
       \item $\|v^*_1\diag(I_n,0) v_1-p\|<9\la^3\eps$;
       \item  $\|v_2\diag(I_n,0) v^*_2-p\|<9\la^3\eps$;   
       \item $\|\diag(u_1,u_2)-v_1v_2\|<9\la^3\eps$;
       \item for $i=1,2$, then $v_i$ is connected to $I_N$ by a homotopy of $9\la^3\eps$-$l_{9\la^2\eps}\r$-unitaries in $I_N+M_N(A\rtr\H_i)$.
       \end{itemize}   
       
       \subsubsection*{Step III}
     Let    $u'$ be an $\eps$-$\r_0$-unitary   in  $I_{N'}+M_{N'}(\widetilde{B\rtr\H})$ for some integer $N'$ such that 
 $y=[u']_{\eps,\r_0}$.
By construction of the controlled Mayer-Vietoris boundary applied to $-y=[u^*]_{\eps,\r_0}$, there exists two 
  $\la\eps$-$l_{\eps}$-unitaries $w_1$ and $w_2$ in $M_{2N'}(\widetilde{B\rtr\H})$ and $q$ an  
  $\la\eps$-$l_{\eps}$-projection in      $\diag(I_{N'},0)+M_{2N'}({B\rtr\H})$ such that     
  \begin{itemize}
  \item $w_i-I_{2N'}$ lies in $M_{2N'}(B\rtr\H_i)$ for $i=1,2$;
  \item $\|diag({u'}^*,u')-w_1w_2\|<\la\eps$;
  \item $\|w_1^*\diag(I_{N'},0)w_1-q\|<\la\eps$ and 
   $\|w_2\diag(I_{N'},0)w^*_2-q\|<\la\eps$; 
   \item $-\partial^{\eps,\r_0}_{\H_1,\H_2,B,*}(z)=[q,N']_{\la\eps,l_{\eps}\cdot\r_0}$  .
   \end{itemize}    
   \subsubsection*{Step IV}
   If we apply Lemma \ref{lemma-bound1} to 
   $\diag(\widetilde{f_\H}(u_1),{u'}^*)$, $\diag(\widetilde{f_\H}(u_2),{u'})$     and to the matrices obtained from
   $\diag(\widetilde{f_\H}(v_1),w_1)$, $\diag(\widetilde{f_\H}(v_2),{w_2})$    and $\diag(\widetilde{f_\H}(p),q)$ by flipping the coordinates $n+1,\ldots,N$ and $N+1,\ldots,N+N'$, we see that   up to replacing $\r'$ by 
   $l_{9\la^3\eps}\r'$, there exist for some integer and for $i=1,2$ a $9\la^4\eps$-$\r'$-unitary $v'_i$ in $I_{N''}+M_{N''}(B\rtr\H_i)$   such that
   \begin{equation*}
   [v'_1]_{9\la^4\eps,\r'}+[v'_2]_{9\la^4\eps,\r'}=[\widetilde{f_\H}(u_1)]_{9\la^4\eps,\r'}-\iota_*^{\eps,9\la^4\eps,\r_0,\r'}(y).
   \end{equation*}
   Since $f_{\H_1,*}$ and $f_{\H_2,*}$ are onto and according to Lemma \ref{lemma-approx}, there exist a compact $\G$-order $\r_1$ containing $\r'$ and for $i=1,2$ an element $x_i$ in $K_1^{{9\la^5\eps,\r_1}}(A\rtr\H_i)$ such that
   $f_{\H_i,*}^{9\la^5\eps,\r_1}(x_i)=-[v'_i]_{9\la^5\eps,\r_1}$ in  $K_1^{{9\la^5\eps,\r_1}}(B\rtr\H_i)$. Then 
   we have
   \begin{equation*}
   \iota_*^{\eps,9\la^5\eps,\r_0,\r_1}(y)=f_{\H,*}^{5\la^5\eps,\r_1} (\jmath_{1,A,*}^{5\la^5\eps,\r_1}(x_1)+\jmath_{2,A,*}^{5\la^5\eps,\r_1} (x_2)+ 
   [u_1]_{5\la^5\eps,\r_1})    \end{equation*}
   and hence $f_{\H,*}$ is onto.
            \end{proof}
            
     \subsection{Extension to Kasparov product}
     The aim of this section is to extend Theorem \ref{theorem-main}
      to morphisms induced in $K$-theory by right  Kasparov product. 
      Indeed, this a consequence of the following standard fact  which says  that up to $KK$-equivalence, a Kasparov element  is equivalent to $C^*$-algebra homomorphism (see \cite{mn} for an approach via triangulated categories).
         \begin{lemma}\label{lemma-eq-morphism}
   Let $\G$  be a  locally   groupoid  provided with a Haar system,  let $A$ and $B$ be $\G$-algebras and let $z$ be an element in
   $KK^\G_*(A,B)$. Then there exists 
   \begin{itemize}
   \item $A'$ and $B'$ two $\G$-algebras;
   \item $f:A'\to B'$  a homomorphism of $\G$-algebras;
   \item $\al$ in $KK^\G_*(A,A')$ and $\beta$ in $KK^\G_*(B',B)$ invertible elements,
   \end{itemize}
   such that $$z=f_*(\al)\ts_{B'} \beta.$$
   \end{lemma}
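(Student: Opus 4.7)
The plan is to invoke the Cuntz picture of equivariant Kasparov theory, adapted to the groupoid setting of Le Gall. For any $\G$-algebra $A$, recall the construction of the $\G$-algebra $qA$ as the kernel of the fold homomorphism $\pi_A:A\star A\to A$ from the $C(X)$-free product $A\star A$ equipped with the diagonal $\G$-action. Cuntz's construction, carried out in the category of $\G$-$C(X)$-algebras, yields a canonical invertible element $\al_A\in KK^\G_0(A,qA)$. Likewise, for a separable $\G$-equivariant Hilbert module $\H_\G$ which is absorbing in the sense of Le Gall, the stabilization homomorphism $B\to B\ts\K(\H_\G)$ represents an invertible element, whose inverse we denote $\beta_B\in KK^\G_0(B\ts\K(\H_\G),B)$.

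The key input is the Cuntz correspondence in the equivariant setting: the map
\[
[qA,\,B\ts\K(\H_\G)]_\G\;\lto\;KK^\G_0(A,B),\qquad [f]\longmapsto f_*(\al_A)\ts_{B\ts\K(\H_\G)}\beta_B,
\]
is surjective, where $[\,\cdot\,]_\G$ denotes homotopy classes of $\G$-equivariant $*$-homomorphisms. Granting this, given $z\in KK^\G_0(A,B)$ I pick a homomorphism $f:qA\to B\ts\K(\H_\G)$ in the preimage of $z$, and take $A'=qA$, $B'=B\ts\K(\H_\G)$, $\al=\al_A$ and $\beta=\beta_B$; this settles the even case. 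The equality $\al_A\ts_{qA}[f]=f_*(\al_A)$ is just naturality of the Kasparov product.

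For the odd case, I suspend: $z\in KK^\G_1(A,B)$ corresponds to $\tilde z\in KK^\G_0(A,SB)$, where $SB=C_0(\R)\ts B$ carries the induced $\G$-action. Applying the even case to $\tilde z$ yields $A'$, $B''$ and a homomorphism $f:A'\to B''$ with $\tilde z=f_*(\al)\ts_{B''}\gamma$ for some invertible $\gamma\in KK^\G_0(B'',SB)$. Setting $B'=B''$ and letting $\beta$ be the Kasparov product of $\gamma$ with the invertible Bott element in $KK^\G_1(SB,B)$ gives the required factorization in degree one; the statement of the lemma allows $\al$ and $\beta$ to have arbitrary degree, so this is admissible.

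The main obstacle I anticipate is purely foundational: one must confirm that the Cuntz picture, together with the KK-equivalences $\al_A$ and $\beta_B$, transfers to Le Gall's groupoid-equivariant $KK^\G$ for locally compact $\G$ with a Haar system. This is essentially the abstract content of the triangulated-category formalism of Meyer-Nest \cite{mn} referenced in the paper, where the required factorization is encoded by the assertion that every morphism in the triangulated category $KK^\G$ is isomorphic to a class induced by a genuine equivariant $*$-homomorphism between suitable models. Once this foundational point is granted, the construction above is a formal manipulation of Kasparov products.
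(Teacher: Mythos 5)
Your route is genuinely different from the paper's, but it hinges on exactly the point you concede you are ``granting'': the surjectivity of the equivariant Cuntz map
\[
[qA,\,B\ts\Kp(\H_\G)]_\G\;\lto\;KK^\G_0(A,B)
\]
for a locally compact groupoid with Haar system. This is not a foundational formality that can be quoted; it is the whole content of the lemma. No reference in the paper supplies it: Meyer--Nest \cite{mn} treats the Baum--Connes machinery for groups via localisation of triangulated categories and does not establish a groupoid-equivariant Cuntz picture, and Le Gall's work gives the equivariant stabilization theorem but not the $qA$-correspondence. Worse, even for locally compact \emph{groups} the statement in the form you use it is not right: Meyer's theorem identifying $KK^G$ with homotopy classes of homomorphisms requires stabilizing the \emph{source} as well (one works with $q(A\ts\K(L^2G))$, not $qA$), because an arbitrary cycle has an operator $F$ that is only invariant modulo compacts, and one cannot feed it into the Cuntz construction until $F$ has been made exactly equivariant. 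Stabilizing only the target by $\Kp(\H_\G)$ does not repair this. So as written the key surjectivity claim is unproved and, in the unstabilized form, doubtful.

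The paper's proof is designed precisely to avoid needing such a universal-property statement. It first uses the imprimitivity bimodule $L^2(\G,A)$ together with Lemma 3.5 of the appendix of \cite{laff-imj} (see also \cite{bonicke}) to reduce to cycles $(\E,\pi,F)$ with $F$ exactly $\G$-equivariant --- this plays the role your ``absorbing module'' is meant to play, but with a concrete citation valid for groupoids. It then converts such a cycle into the semi-split extension $0\to\Kp(\E)\to E_P\to A\to 0$ with $P=\tfrac12(F+\operatorname{Id})$, and factors its class through genuine homomorphisms using the mapping cone $C_q$ and Skandalis's results \cite{sk-exact-sequences} that $e_q$ is a $KK^\G$-equivalence and $e_{q,*}[\partial_{I,A}]=\phi_{q,*}[\partial_{A/I}]$; the even case follows by composing with the invertible suspension class $[\partial_A]$. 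If you want to salvage your approach, you would have to actually prove the groupoid-equivariant Cuntz correspondence (with source stabilized appropriately), which is a substantially harder task than the lemma itself; otherwise your argument, while cleanly organized, has a genuine gap at its central step.
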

   \begin{proof}
   Let us first prove the result for $z$ in $KK_1^\G(A,B)$. The imprimitivity $\Kp(\L^2(\G,A))$-$A$-bimodule $\L^2(\G,A)$ gives rise  an invertible element $[\mathcal{M}]$ in $KK_*^\G(\Kp(\L^2(\G,A)),A)$  and hence, this reduces to prove the result for $[\mathcal{M}]\ts_A z$. According to Lemma 3.5 of the appendix of \cite{laff-imj} (see also \cite[Section 5]{bonicke}), this amount to prove the result for any element $z$ in $KK_1^\G(A,B)$ that can be represented by a Kasparov $K$-cycle $(\E,\pi,F)$ such that  $F:\E\to\E$   is  $\G$-equivariant. Up to adding a degenerated Kasparov $K$-cycle, we can assume without loss of generality that 
   the linear space generated by $\{\langle \xi,\nu\rangle,\, \xi\text{ and }\nu\text{ in }\E\}$ is dense in $B$.  Let us set $P=\frac{1}{2}(F+Id_\E)$ and let us consider the $\G$-algebra
 \begin{equation*} E_P=\{(a,f)\text{ in } A\oplus \mathcal{L}(\E)\text{ such that } T-P\cdot\pi(a)\cdot P\text{ belongs to }\Kp(\E)\}.\end{equation*}
 Then the projection on the first factor of $E_P$ gives rise to an extension of $\G$-algebra 
 \begin{equation}\label{eq-semi-split}0\lto \Kp(\E)\lto E_P\lto A\lto 0$$ semi-splited by 
 $$A\lto E_P;a\mapsto ( a,P\cdot\pi(a)\cdot P).\end{equation} Let $[\M]$ be the element in $KK_1^\G(\Kp(\E),B)$  corresponding to the
 $\Kp(\E)$-$B$-imprimitivity bimodule $\E$. Then $[\M]$ is invertible and $z\ts_B[\M]^{-1}$ is the class in $KK_1^\G(A,\Kp(\E))$ of the semi-split extension \eqref{eq-semi-split} . Hence this amounts to prove that the lemma holds for the class
 $[\partial_{I,A}]$ in $KK_1^\G(A/I,I)$ of any semi-split extension $0\lto I\lto A\lto A/I\lto 0$. We proceed by using the mapping cone. For $B$ a $\G$-algebra let us set $$B(0,1]=\{f:[0,1]\to \C\text{ continuous such that }f(0)=0\},$$
 $$B(0,1)=\{f:[0,1]\to \C\text{ continuous such that }f(0)=f(1)=0\},$$   and let us consider the class $[\partial_B]$ in $KK_1^\G(B, B(0,1))$ of the semi-split extension of $\G$-algebras
 $$0\lto B(0,1)\lto  B(0,1]\stackrel{ev_1}\lto B \lto 0,$$ where $ev_1:B(0,1]\to B$ is the evaluation at $1$. Recall that
 $[\partial_B]$ is invertible. For a semi-split extension
 $$0\lto I\lto A\lto A/I\stackrel{q}{\lto} 0,$$  we define the mapping cone algebra of $A$ by 
 $$C_q=\{(x,f)\in A\oplus A/I(0,1]\text{ such that } f(1)=q(x)\}.$$
 Let us consider the morphisms of $\G$-algebras
 $$e_q:I\lto C_q; x\mapsto (x,0).$$ and
 $$\phi_q:A/I(0,1)\lto C_q; f\mapsto (0,f).$$
 According to \cite{sk-exact-sequences}, the element $[e_q]$ in $KK_*^\G(I,C_q)$ induced by $e_q$  is invertible and moreover,
 $$e_{q,*}[\partial_{I,A}]=\phi_{q,*}[\partial_{A/I}].$$ Hence we have 
 
 $$[\partial_{I,A}] =\phi_{q,*}[\partial_{A/I}]\ts_{C_q}  [e_q]^{-1}.$$ Since $[\partial_{A/I}]$ is an invertible element in
 $KK_1^\G(A/I,A/I(0,1))$, we deduce that the conclusion of the lemma holds for $[\partial_{I,A}]$.
 
 \smallbreak
 
 Let us prove now that the result holds in the even case. Let $z$ be an element in $KK_1^\G(A,B)$. Noticing that $[\partial_A]$ is invertible in $KK^\G_1(A,A(0,1)$ and applying the odd case to
 $[\partial_A]^{-1}\otimes_A z$, we deduce the result for $z$.
 \end{proof}
 
 As a consequence, we can extend  Theorem \ref{theorem-main} to $KK$-elements.
 Recall that for any locally   groupoid  $\G$  provided with a Haar system, then
 $$J_\G:KK^\G_*(\bullet,\bullet)\to KK_*(\bullet\rtr\G,\bullet\rtr\G)$$ stands for the Kasparov transformation. For any $\G$-algebras $A$ and $B$ and for 	any element  $z$   in $KK^\G_*(A,B)$, we denote by $$\ts J_\G(z):K_*(A\rtr\G)\to K_*(B\rtr\G)$$ the morphism induced by Kasparov right multiplication by $J_\G(z)$.
 
    \begin{corollary}\label{cor-extension}
     Let $\G$  be a  locally   groupoid  provided with a Haar system, let $A$ and $B$ be $\G$-algebras and let $z$ be an element in
     $KK^\G_*(A,B)$.    Let us assume that there exists a  subset $\D$ of relatively clopen groupoids of $\G$, closed under taking relatively  clopen subgroupoids and  such that
  \begin{enumerate}
   \item $\G$ has finite $\D$-complexity.
   \item for any subgroupoid $\H$, the morphism  $$\ts J_\H(z_{/\H}):K_*(A\rtr\H)\to K_*(B\rtr\H)$$ is an isomorphism.
  \end{enumerate}
  Then  $$\ts J_\G(z):K_*(A\rtr\G)\to K_*(B\rtr\G)$$  is an isomorphism.
  \end{corollary}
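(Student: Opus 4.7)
The plan is to reduce the corollary to Theorem~\ref{theorem-main} via Lemma~\ref{lemma-eq-morphism}. First I would apply that lemma to write $z = \al \ts_{A'} [f] \ts_{B'} \beta$ for some homomorphism of $\G$-algebras $f : A' \to B'$ and $KK^\G$-invertible elements $\al \in KK^\G_*(A,A')$ and $\beta \in KK^\G_*(B',B)$, where $[f] \in KK^\G_0(A',B')$ denotes the class of $f$. Applying the Kasparov transformation $J_\G$ and using its compatibility with Kasparov products, the induced map on $K$-theory factors as
\[
\ts J_\G(z) \;=\; (\ts J_\G(\beta)) \circ f_{\G,*} \circ (\ts J_\G(\al)),
\]
since $J_\G([f])$ is the class of the induced homomorphism $f_\G : A'\rtr\G \to B'\rtr\G$. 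Because $\al$ and $\beta$ are $KK^\G$-invertible, functoriality of $J_\G$ makes the outer maps isomorphisms, so it suffices to show that $f_{\G,*}$ is an isomorphism.

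The second step is to verify that $f$ satisfies the hypotheses of Theorem~\ref{theorem-main} relative to the family $\D$. For each $\H \in \D$, the same factorisation applied with respect to $J_\H$ and the restrictions $\al_{/\H}$, $\beta_{/\H}$ gives
\[
\ts J_\H(z_{/\H}) \;=\; (\ts J_\H(\beta_{/\H})) \circ f_{\H,*} \circ (\ts J_\H(\al_{/\H})).
\]
Since restriction $KK^\G \to KK^\H$ is a functor compatible with Kasparov products, $\al_{/\H}$ and $\beta_{/\H}$ remain invertible, so the outer maps are again isomorphisms. The hypothesis that $\ts J_\H(z_{/\H})$ is an isomorphism then forces $f_{\H,*} : K_*(A'\rtr\H) \to K_*(B'\rtr\H)$ to be an isomorphism for every $\H \in \D$.

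With this in hand, Theorem~\ref{theorem-main} applied to the $\G$-equivariant homomorphism $f : A' \to B'$ and the same family $\D$ yields that $f_{\G,*}$ is an isomorphism, and substituting back into the factorisation of $\ts J_\G(z)$ gives the desired conclusion. The main point requiring care is the naturality and multiplicativity bookkeeping for both $J_\G$ and the restriction functor $KK^\G \to KK^\H$, together with the fact that invertible $KK^\G$-elements restrict to invertible $KK^\H$-elements for every relatively clopen subgroupoid $\H$. These are standard features of the equivariant Kasparov transformation (cf.\ Le~Gall), so the genuine content of the argument lies in Lemma~\ref{lemma-eq-morphism} and Theorem~\ref{theorem-main}; this corollary merely assembles them.
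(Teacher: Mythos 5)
Your proposal is correct and follows essentially the same route as the paper: apply Lemma~\ref{lemma-eq-morphism} to replace $z$ by a $\G$-equivariant homomorphism $f:A'\to B'$ up to $KK^\G$-equivalences, observe via multiplicativity of the Kasparov transformation (and of restriction to each $\H$ in $\D$) that $\ts J_\G(z)$ (resp.\ $\ts J_\H(z_{/\H})$) is an isomorphism if and only if $f_{\G,*}$ (resp.\ $f_{\H,*}$) is, and then invoke Theorem~\ref{theorem-main}. You merely make explicit the factorisation and invertibility bookkeeping that the paper leaves implicit.
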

  \begin{proof}Let $A'$ and $B'$ be $\G$-algebras, let $f:A'\to B'$ be a morphism of $\G$-algebras and let $\al$ in $KK^\G_*(A,A')$ and $\beta$ in $KK^\G_*(B',B)$  be invertible elements as in Lemma \ref{lemma-eq-morphism}. Then $$\ts J_\G(z):K_*(A\rtr\G)\to K_*(B\rtr\G)$$ is an isomorphism if and only if $$f_{\G,*}:K_*(A'\rtr\G)\to K_*(B'\rtr\G)$$ is an isomorphism and in the same way 
  $$\ts J_\H(z_{/\H}):K_*(A\rtr\H)\to K_*(B\rtr\H)$$ is an isomorphism if and only if
  $$f_{\H,*}:K_*(A'\rtr\H)\to K_*(B'\rtr\H).$$ The corollary is then consequence of Theorem
  \ref{theorem-main} applied to $f:A'\to B'$ for any $\H$ in $\D$.\end{proof}
  \section{Application the Baum-Connes conjecture}
  In this section, we show that for groupoids which admit a $\ga$-element in sense of 
  \cite{tuhyp},  then the Baum-Connes conjecture is closed under closed decomposition.

   \subsection{The Baum-Connes conjecture for groupoids}
  Let us recall first the statement of the Baum-Connes conjecture for  groupoids. Let $\G$ be a locally compact groupoid provided with the Haar system, let $A$ be a $\G$-algebra and let $A\rtr\G$ be the reduced crossed product of $A$ by $\G$ (with respect to the given Haar system). Then the Baum-Connes conjecture for the pair $(A,\G)$ is the  claim that the assembly map
  $$\mu_{A,\G}:K_*^{top}(\G,A)\lto K_*(A\rtr\G)$$ is an isomorphism,  the left-hand side being  the topological $K$-theory for the groupoid $\G$ with coefficient, defined as the inductive limit
  $$\lim_X KK_*^\G(C_0(X),A),$$ where $X$ runs through $\G$-compact subsets of the universal example for proper actions of $\G$ (see \cite{tuhyp}  for  a complete description of the Baum-Connes conjecture in the setting of groupoids). Although the conjecture holds for a large class of pair $(A,\G)$, (e.g if $\G$ is an amenable groupoid), counterexamples  have been exhibited by N. Higson, V. Lafforgues and G. Skandalis in \cite{hls}. 
   \subsection{The case of groupoids admitting a $\ga$-elements}  
   The concept of $\ga$-element was introduced by G. Kasparov in \cite{kas}  in order to prove  the Novikov conjecture for discrete subgroups of almost  connected groups. He showed that for an  almost connected group $G$ acting on a $C^*$-algebra $A$ strongly continuously  by automorphisms, then the image of the Baum-Connes assembly map is the range of $\ga$ acting on $K_*(A\rtr G)$ as an idempotent. The notion of $\ga$-element was extended to groupoid by J. L. Tu in \cite{tuhyp}, where he developed an abstract  setting for such an element.

 \begin{definition}\label{definition-gamma-element}
 A locally compact groupoid admits a $\ga$-element if there exists an element $\ga$ in $KK_*^\G(C_0(X),C_0(X))$,  a  proper  $\G$-space $Z$,    a $Z\rt\G$-algebra $A$, an element  $\eta$ in
 $KK_*^\G(C_0(X),A)$ and an element $D$ in $KK_*^\G(C_0(X),A)$ such that  
 \begin{itemize}
 \item $\ga=\eta\ts_A D$;
 \item $p^*_{Z'}\ga=1$ in $K_*^{\G\rt Z'}(C_0(Z'),C_0(Z'))$ for every proper  $\G$-space $Z'$, where $p_{Z'}:Z'\rtr\G\to   \G$ is the forgetful map.
 \end{itemize} \end{definition}
 Such an element if it exists is unique and is called a $\ga$-element. As in the case of the $\gamma$-element of Kasparov, a $\gamma$-element   is an idempotent of  $KK_*^\G(C_0(X),C_0(X))$ and as such  acts as an idempotent on $K_*(A\rtr\G)$.  This idempotent is given by right Kasparov product by $J_\G(\tau_A(\ga))$, where 
 $\tau_A(\ga)\in KK^\G_*(A,A)$ is obtained  by tensorization over $C(X)$ by $A$.  Moreover, it is related to the Baum-Connes conjecture in the following way:  

 \begin{proposition}
 Let $\G$ be a locally compact groupoid provided with a Haar system admitting a $\gamma$-element and  let $A$ be a $\G$-algebra. Then the following assertions are equivalent : 
 \begin{enumerate}
 \item $\mu_{A,\G}:K_*^{top}(\G,A)\lto K_*(A\rtr\G)$ is an isomorphism ;
 \item  $J_\G(\tau_A(\ga))$ acts as the identity by right Kasparov product on $K_*(A\rtr\G)$.
 \end{enumerate}
 \end{proposition}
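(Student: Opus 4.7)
The plan is to run the classical Dirac--dual Dirac argument, adapted to the groupoid setting. To avoid a notational clash, I will rename the auxiliary algebra $A$ of Definition \ref{definition-gamma-element} to $B$, so that $\gamma = \eta\ts_B D$ with $B$ a $Z\rt\G$-algebra, $\eta\in KK^\G_*(C_0(X),B)$ and $D\in KK^\G_*(B,C_0(X))$. Tensoring over $C_0(X)$ with the coefficient algebra $A$ produces $\tau_A(\eta)\in KK^\G_*(A,A\ts B)$ and $\tau_A(D)\in KK^\G_*(A\ts B,A)$, with $\tau_A(\gamma)=\tau_A(\eta)\ts_{A\ts B}\tau_A(D)$. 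A crucial observation is that $A\ts B$ is canonically a $Z\rt\G$-algebra; since $Z\rt\G$ is proper, the Baum--Connes conjecture holds with coefficients in $A\ts B$ by \cite{tuhyp}, so $\mu_{A\ts B,\G}$ is an isomorphism.

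I would then set up the naturality diagram of the assembly map under right Kasparov product with $\tau_A(\eta)$ and $\tau_A(D)$:
\begin{equation*}
\begin{CD}
K_*^{top}(\G,A) @>{\cdot\,\tau_A(\eta)}>> K_*^{top}(\G,A\ts B) @>{\cdot\,\tau_A(D)}>> K_*^{top}(\G,A)\\
@V{\mu_{A,\G}}VV @V{\mu_{A\ts B,\G}}VV @V{\mu_{A,\G}}VV\\
K_*(A\rtr\G) @>{\cdot\, J_\G(\tau_A(\eta))}>> K_*((A\ts B)\rtr\G) @>{\cdot\, J_\G(\tau_A(D))}>> K_*(A\rtr\G)
\end{CD}
\end{equation*}
in which the middle vertical arrow is the isomorphism just noted. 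The top horizontal composition is right Kasparov product by $\tau_A(\gamma)$, which acts as the identity on $K_*^{top}(\G,A)$: indeed $K_*^{top}(\G,A)$ is assembled from classes in $KK^\G_*(C_0(X'),A)$ for proper $\G$-compact $X'$, and on each such piece the defining property $p^*_{X'}\gamma=1$ forces $\gamma$ to act trivially. The bottom horizontal composition is right Kasparov product by $J_\G(\tau_A(\gamma))$.

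Given this diagram the equivalence reduces to a short chase. For $(1)\Rightarrow(2)$, all three vertical arrows are isomorphisms and an identity top row transports to an identity bottom row, which is precisely $(2)$. For the converse, I would first observe --- and this part is actually unconditional --- that $\mu_{A,\G}$ is injective: if $x\in\ker\mu_{A,\G}$, the left square combined with the iso $\mu_{A\ts B,\G}$ gives $\cdot\,\tau_A(\eta)(x)=0$; then applying $\cdot\,\tau_A(D)$ and using identity of the top row forces $x=0$. Assuming now $(2)$, surjectivity goes the other way: given $z\in K_*(A\rtr\G)$, use $(2)$ to write $z=\cdot\, J_\G(\tau_A(D))(w)$ with $w=\cdot\, J_\G(\tau_A(\eta))(z)$, lift $w$ through the inverse of $\mu_{A\ts B,\G}$ to an element $y\in K_*^{top}(\G,A\ts B)$, and read off $\mu_{A,\G}(\cdot\,\tau_A(D)(y))=z$ from the right square.

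The real obstacle is not in the diagram chase but in having the three inputs in hand: naturality of $\mu_{\bullet,\G}$ under right Kasparov product in the form $\mu_{A,\G}\circ(\cdot\, z)=(\cdot\, J_\G(z))\circ\mu_{A,\G}$, compatibility of $\tau_A$ with Kasparov composition so that $\tau_A(\gamma)=\tau_A(\eta)\ts_{A\ts B}\tau_A(D)$, and the Baum--Connes isomorphism $\mu_{A\ts B,\G}$ for coefficients of the form $A\ts B$ with $B$ a $Z\rt\G$-algebra. All three are by now standard and are supplied in \cite{tuhyp}; once they are invoked, the proposition drops out in a couple of lines.
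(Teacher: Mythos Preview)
The paper does not supply a proof of this proposition; it is stated as a known result, the theory of $\gamma$-elements for groupoids being attributed to Tu \cite{tuhyp}. Your Dirac--dual Dirac argument is the standard proof and is correct: the three inputs you isolate (naturality of $\mu_{\bullet,\G}$ under right Kasparov product, triviality of the $\gamma$-action on $K_*^{top}(\G,A)$ via the second axiom in Definition \ref{definition-gamma-element}, and the Baum--Connes isomorphism for coefficients that are algebras over a proper $\G$-space) are precisely what is established in \cite{tuhyp}, and the diagram chase you describe is the classical one.
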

 \begin{remark}
 Since $J_\G(\tau_A(\ga))$ is an idempotent, it acts as the identity by right Kasparov product on $K_*(A\rtr\G)$ if and only if it acts as an isomorphism.
 \end{remark}
The restriction of a $\ga$-element to a relatively clopen subgroupoid is a $\ga$-element.
\begin{lemma}\label{lemma-gamma-restriction}
Let $\G$ be a locally compact groupoid and let  $\H$ be  a relatively clopen subgroupoid of $\G$. If $\G$ admits a $\gamma$-element, then the restriction of $\gamma$ to $\H$ is a   
 $\gamma$-element for $\H$.
 \end{lemma}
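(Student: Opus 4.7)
The plan is to define the restricted element $\ga_{/\H}$ by restricting each piece of the factorization $\ga=\eta\ts_A D$ to the subgroupoid $\H$, and then to verify the two axioms of Definition \ref{definition-gamma-element}. Let $Y$ denote the unit space of $\H$. The space $Z_{/Y}$ is a proper $\H$-space, since any proper $\G$-action restricts to a proper $\H$-action, and $A_{/Y}$ inherits the structure of a $Z_{/Y}\rt\H$-algebra. Restriction in equivariant $KK$-theory along $\H\hookrightarrow\G$ produces elements $\eta_{/\H}\in KK^\H_*(C_0(Y),A_{/Y})$ and $D_{/\H}\in KK^\H_*(A_{/Y},C_0(Y))$, and since restriction commutes with Kasparov products, the factorization $\ga_{/\H}=\eta_{/\H}\ts_{A_{/Y}}D_{/\H}$ through the $Z_{/Y}\rt\H$-algebra $A_{/Y}$ follows formally.

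The substantive part is the second axiom: for every proper $\H$-space $Z'$ one must show that $p_{Z'}^*\ga_{/\H}=1$ in $KK^{\H\lt Z'}_*(C_0(Z'),C_0(Z'))$. The natural strategy is to induce $Z'$ up to a proper $\G$-space, invoke the $\ga$-axiom of $\G$ there, and pull the resulting identity back. Setting $W=\G\times_\H Z'$, the induction proposition of \cite{bonicke} quoted in Subsection \ref{subsec-induced-actions} ensures that $W$ is a proper $\G$-space, so the $\ga$-axiom for $\G$ yields $p_W^*\ga=1$ in $KK^{\G\lt W}_*(C_0(W),C_0(W))$.

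The bridge between the two settings is the canonical $\H$-equivariant map $\iota:Z'\to W$, $z\mapsto[u_{p(z)},z]$. I would first show that $\iota$ is a homeomorphism onto a clopen $\H$-invariant subspace of $W$: injectivity is a direct check from the equivalence relation defining $W$, and clopenness follows from Remark \ref{remark-relatively-clopen} (which implies that $\H\times_Y Z'$ is clopen and $\H$-invariant in $\G\times_Y Z'$) combined with the openness of the orbit map $\G\times_Y Z'\to W$. Via $\iota$, the assignment $(h,z)\mapsto(h,\iota(z))$ defines a groupoid homomorphism $\Phi:\H\lt Z'\to\G\lt W$, and $\Phi$ fits into a commutative square with the forgetful maps $\H\lt Z'\to\H$ and $\G\lt W\to\G$ sitting above the inclusion $\H\hookrightarrow\G$.

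The proof is then concluded by applying functoriality of pullback in equivariant $KK$-theory to this square, giving $p_{Z'}^*\ga_{/\H}=\Phi^*(p_W^*\ga)=\Phi^*(1)=1$, which is the desired identity. The main technical point I expect is the verification that $\iota(Z')$ is clopen in $W$, and the systematic checking that all the relevant pullbacks commute as claimed --- a naturality statement for equivariant Kasparov groups that is routine but notationally involved. Once this is in place, the conclusion is a formal consequence of the $\ga$-axiom for $\G$ together with the Bonicke induction result.
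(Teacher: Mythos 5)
Your proposal is correct and follows essentially the same route as the paper: restrict the factorization $\ga=\eta\ts_A D$ to $\H$ (using that the proper $\G$-action on $Z$ restricts to a proper $\H$-action on $Z_Y$ and that restriction respects Kasparov products), then verify the second axiom by inducing a proper $\H$-space $Z'$ to the proper $\G$-space $\G\times_\H Z'$ via B\"onicke's result and pulling back along the natural groupoid morphism $\H\lt Z'\to\G\lt(\G\times_\H Z')$, $(h,z)\mapsto(h,[u_{p(z)},z])$. The paper treats this morphism as an "obvious inclusion" where you propose to check clopenness of its image explicitly, but that is only a difference in level of detail, not in method.
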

    \begin{proof}Let us denote respectively by $X$ and $Y$ the  space of units  of $\G$ and $\H$.
    Let $Z$ be a proper $\G$-space, let $A$ be a $\G\rt Z$-algebra, let  $\eta$ be an element in $KK_*^\G(C_0(X),A)$ and let $D$ be an element in $KK_*^\G(A,C_0(X))$  as in
    Definition \ref{definition-gamma-element}. According to  Remark \ref{remark-relatively-clopen-action} and to Corollary \ref{corollary-subgroupoid-proper}, the proper action of $\G$ on $Z$ restricts to a proper action of 
        $\H$ on $Z_Y$. Let $A_{/Z_Y}$ be the restriction of $A$ to $Z_Y$. According to the second point of Example \ref{example-action-algebra}, then 
     $A_{/Z_Y}$ is a $\H\rt Z_Y$ algebra. Let $\gamma_{/\H}$  in $KK_*^\H(C_0(Y),C_0(Y))$,  $\eta_{/\H}$  in $KK_*^\H(C_0(Y), A_{/Z_Y})$ and 
 $\D_{/\H}$  in  $KK_*^\H(A_{/Z_Y},C_0(Y))$  be respectively the restriction of $\gamma$, $\eta$ and  $D$ to $\H$ (i.e induced by functoriality in the groupoids by the inclusion  $\H\hookrightarrow\G$). Since the restriction respects  Kasparov products, we deduce that $\ga_{/\H}=\eta_{/\H}\ts_{A_{/Z_Y}} D_{/\H}$. Let us check the second point of the definition of a $\gamma$-element. Let $Z'$ be a proper $\H$-space, let $Z''=\G\times_\H Z'$ be the proper induced $\G$-space (see Section \ref{subsec-induced-actions}) and let us recall that  $p_{Z''}:\G\lt Z'' \to \G$ stands for the forgetful map. We have by definition of a $\gamma$-element that $p^*_{Z''}\ga=1$ in $K_*^{Z''\rt\G}(C_0(Z''),C_0(Z''))$. We have an obvious  inclusion of groupoids
 $$\H\rt Z'\hookrightarrow \G\rt Z'';\,(\gamma,z)\mapsto (\ga,[u_{p_{Z'}}(z),z])$$ which pulls back  $p^*_{Z''}\ga$ to $p^*_{Z'}\ga_{/\H}$ and hence 
  $p^*_{Z'}\ga_{/\H}=1$ in $K_*^{\H\rt Z'}(C_0(Z'),C_0(Z'))$. We conclude that  $\ga_{/\H}$ is a 
  $\gamma$-element for $\H$.
    \end{proof}
    
    \begin{remark}\label{rem-bcc-subgroupoid}
    As a consequence and using induced algebras \cite{bonicke}, we can prove  that if $\G$ is  a locally compact groupoid which admits a $\gamma$-element and which satisfies the Baum-Connes conjecture with coefficients, then any relatively clopen subgroupoid of $\G$ satisfies 
     the Baum-Connes conjecture with coefficients.
     \end{remark}
    An action groupoid of a groupoid with a $\ga$-element has a $\ga$-element.
    \begin{lemma}
    Let $\G$ be a locally compact groupoid and let  $Y$ be  a locally compact (left) $\G$-space. If $\G$ admits a $\ga$-element, then  the action groupoid 
     $\G\lt Y$ admits a $\ga$-element.
   \end{lemma}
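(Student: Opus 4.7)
The plan is to obtain a $\gamma$-element for $\G\lt Y$ by pulling back all the data for $\G$ along the forgetful groupoid morphism $p_Y\colon \G\lt Y\to \G$, $(\ga,y)\mapsto \ga$. This morphism restricts on unit spaces to the anchor map $q\colon Y\to X$, and the associated pullback functor on equivariant $KK$-theory carries $\G$-algebras to $\G\lt Y$-algebras and sends $C_0(X)$ to $C_0(Y)$. The candidate $\gamma$-element is then $\ga_Y := p_Y^*\ga \in KK^{\G\lt Y}_*(C_0(Y), C_0(Y))$.

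I then identify the auxiliary proper space and algebra by pullback. By point (iv) of Remark \ref{remark-cross-product-action}, $Z_Y := Z\times_X Y$ is a proper $\G\lt Y$-space with anchor given by the second projection. Let $A_Y$ be the $C(Z_Y)$-algebra obtained from $A$ by pullback along the first projection $Z_Y\to Z$. Since $A$ is a $Z\rt\G$-algebra, its $\G$-action is $C(Z)$-linear, and consequently the inherited $\G\lt Y$-action on $A_Y$ is $C(Z_Y)$-linear. In view of point (ii) of Example \ref{example-action-algebra}, this makes $A_Y$ a $(\G\lt Y)\lt Z_Y$-algebra. Define $\eta_Y := p_Y^*\eta \in KK^{\G\lt Y}_*(C_0(Y), A_Y)$ and $D_Y := p_Y^*D \in KK^{\G\lt Y}_*(A_Y, C_0(Y))$; because pullback along a groupoid morphism is compatible with the internal Kasparov product, the identity $\ga = \eta\otimes_A D$ pulls back to $\ga_Y = \eta_Y\otimes_{A_Y} D_Y$, which is the first clause of Definition \ref{definition-gamma-element}.

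For the normalization, let $Z'$ be an arbitrary proper $\G\lt Y$-space. Points (i) and (iii) of Remark \ref{remark-cross-product-action} identify $Z'$ with a proper $\G$-space equipped with a $\G$-equivariant map to $Y$, while point (ii) provides a canonical groupoid isomorphism $(\G\lt Y)\lt Z' \cong \G\lt Z'$ that intertwines the two forgetful maps down to $\G$. Pulling $\ga$ back along the common composition yields simultaneously $p_{Z'}^*\ga_Y$ and $p_{Z'}^*\ga$, and the latter equals $1$ in $KK^{\G\lt Z'}_*(C_0(Z'),C_0(Z'))$ by the normalization property of $\ga$. This is the second clause of Definition \ref{definition-gamma-element}, so $\ga_Y$ is a $\gamma$-element for $\G\lt Y$.

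The only delicate points are formal: one has to check carefully that $A_Y$ really inherits a $(\G\lt Y)\lt Z_Y$-algebra structure from $A$, and that the pullback functor $p_Y^*$ commutes both with the interior Kasparov product (used in the third paragraph to turn $\ga=\eta\otimes_A D$ into $\ga_Y=\eta_Y\otimes_{A_Y} D_Y$) and with further pullback under the forgetful map $(\G\lt Y)\lt Z' \to \G\lt Y$. These are standard facts in equivariant $KK$-theory for locally compact groupoids, but each one requires a short verification at the level of Kasparov modules.
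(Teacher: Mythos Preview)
Your proof is correct and follows essentially the same approach as the paper: both pull back $\ga$, $\eta$, $D$, the proper space $Z$, and the algebra $A$ along the forgetful morphism $p_Y\colon \G\lt Y\to\G$, invoke compatibility of $p_Y^*$ with the Kasparov product for the factorization, and use the identification $(\G\lt Y)\lt Z'\cong \G\lt Z'$ from Remark~\ref{remark-cross-product-action} to verify the normalization on proper $\G\lt Y$-spaces. The only cosmetic difference is that the paper writes the pulled-back algebra as $q_Y^*A=A\otimes_{C_0(X)}C_0(Y)$ rather than as a pullback along $Z\times_X Y\to Z$, but these are the same object.
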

   \begin{proof}
   Let us denote by $X$   the space of units   of $\G$ and let $q_Y:Y\to X$ be the anchor map for 
   the $\G$-action on $Y$.
    Let $Z$ be a proper $\G$-space, let $A$ be a $\G\lt Z$-algebra, let  $\eta$ be an element in $KK_*^\G(C_0(X),A)$ and let $D$ be an element in $KK_*^\G(A,C_0(X))$  as in
    Definition \ref{definition-gamma-element}. Let $p_Y:\G\rt Y\to\G$ be the forgetful map with respect to the $\G$-action on $Y$.  According to the fourth  point of Remark \ref{remark-cross-product-action}, we see that  $Z\times_X Y$ is a proper $\G\rt Y$-space with anchor map 
    $q_{Z\times_X Y}:Z\times_X Y\to Y$ given by the projection on the second factor.
    Consider then the elements $\ga_Y=p^*_Y\ga$ in $KK_*^{\G\lt Y}(C_0(Y),C_0(Y))$, 
    $\eta_Y=p^*_Y\eta$ in $KK_*^{\G\lt Y}(C_0(Y),q_Y^*A)$ and $D_Y=p^*_YD$ in
    $KK_*^{\G\lt Y}(q_Y^*A,C_0(Y))$. Using  the second point of Example \ref{example-action-algebra}, we see that  $q_Y^*A=A\otimes_{C_0(X)} C_0(Y)$ is a 
    $({\G\lt Y})\lt ( Z\times_X Y)$-algebra and since $p^*_Y$ preserves Kasparov products, 
    we have
    $$\ga_Y=\eta_Y\ts_ {q_Y^*A}D_Y.$$
    Let us check now the second condition of  Definition \ref{definition-gamma-element}.
    Let $Z'$ be a proper $\G\lt Y$-space. According to the first and the third point of Remark
    \ref{remark-cross-product-action}, we see  that $Z'$ is a proper $\G$-space equipped
    with a $\G$-map $Z'\to Y$. Let $$p_{Z'}:(\G\lt Y)\lt Z'\to \G\lt Y$$ be the forgetful map with respect to the $\G\lt Y$-action on $Z'$. Then we have 
    \begin{eqnarray*}
    p_{Z'}^*\ga_Y&=&p_{Z'}^*(p_Y^*\ga)\\
    &=& (p_Y\circ p_{Z'})^*\ga
    \end{eqnarray*}
    But under the identification between  $(\G\lt Y)\lt Z'$ and $\G\lt Z'$ of the second point of
    Remark
    \ref{remark-cross-product-action},
    then $$p_Y\circ p_{Z'}:(\G\lt Y)\lt Z'\to \G$$ corresponds to the forgetful map
    $\G\lt Z'\to\G$ with respect to the  proper $\G$-action on $Z'$. From this we deduce that
    $p^*_{Z'}\ga_Y=1$ in $KK_*^{(\G\lt Y)\lt  Z'}(C_0(Z'),C_0(Z'))$ and hence $\ga_Y$ is a $\ga$-element for $\G\lt Y$.

    \end{proof}
    \begin{example}
   The following examples of locally compact group $G$ are known to have a $\ga$-element
    \begin{enumerate}
    \item if $G$ acts properly  on a simply connected manifold with non-positive sectional curvature \cite{kas};
    \item if $G$ is (a closed subgroup of) an almost connected group \cite{kas};
    \item if $G$ is groups acting properly on an Euclidean buildings \cite{ks}.
    \end{enumerate}
    For any action of such a group $G$ on a locally compact space $X$, then the action groupoid $G\lt X$ has a $\ga$-element.
    \end{example}

  \subsection{Baum-Connes conjecture and coarse decomposability}

  \begin{theorem}\label{thm-BC-coarse-decomp}
  Let $\G$  be a  locally   groupoid  provided with a Haar system   which moreover admits a  $\ga$-element in sense of 
  \cite{tuhyp} and let $A$ be a $\G$-algebra. Assume that there exists a  subset $\D$ of relatively clopen subgroupoids of $\G$,  closed under taking relatively  clopen subgroupoids
  such that
  \begin{enumerate}
  \item every groupoid in $\D$ satisfies the Baum-Connes conjecture with coefficients in $A$;
  \item $\G$ has finite $\D$-complexity.
  \end{enumerate}
  Then $\G$ satisfies the Baum-Connes conjecture with coefficients in $A$.
  \end{theorem}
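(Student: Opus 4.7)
The plan is to reduce the Baum--Connes conjecture for $(A,\G)$ to the isomorphism statement provided by Corollary \ref{cor-extension} applied to the $KK^\G$-element $\tau_A(\gamma)$, where $\gamma$ is the $\gamma$-element of $\G$. More precisely, the strategy is to prove that right Kasparov product by $J_\G(\tau_A(\gamma))$ acts as an isomorphism on $K_*(A\rtr\G)$; since this operator is also an idempotent (because $\gamma$ is an idempotent in $KK^\G_*(C_0(X),C_0(X))$ and $\tau_A$ and $J_\G$ are ring homomorphisms), it will then act as the identity, and the $\gamma$-element criterion recalled in the excerpt yields the Baum--Connes conjecture for $(A,\G)$.

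To apply Corollary \ref{cor-extension} to $z=\tau_A(\gamma)\in KK^\G_*(A,A)$, I first need to check hypothesis (ii), namely that for every $\H$ in $\D$ the morphism $\otimes J_\H(z_{/\H})$ is an isomorphism on $K_*(A\rtr\H)$. For this I would use Lemma \ref{lemma-gamma-restriction} to obtain that $\gamma_{/\H}$ is a $\gamma$-element for $\H$, and observe by naturality of the $\tau$-construction and of the restriction functor that $z_{/\H}=\tau_{A_{/Y}}(\gamma_{/\H})$, where $Y$ is the unit space of $\H$ and $A_{/Y}$ is the $\H$-algebra obtained by restriction. Since by hypothesis $(1)$ the Baum--Connes conjecture with coefficients in $A$ holds for $\H$ (which, in the coefficient formulation described in the reminder on the conjecture, means exactly the conjecture for the pair $(A_{/Y},\H)$), the $\gamma$-element criterion gives that $J_\H(\tau_{A_{/Y}}(\gamma_{/\H}))$ acts as the identity on $K_*(A\rtr\H)$, hence in particular as an isomorphism.

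The remaining hypothesis of Corollary \ref{cor-extension} is exactly the $\D$-fdc assumption on $\G$, together with the stability of $\D$ under relatively clopen subgroupoids. Applying the corollary gives that $\otimes J_\G(\tau_A(\gamma))$ is an isomorphism on $K_*(A\rtr\G)$, and combining this with the idempotency observation above, one concludes that the operator is the identity; the equivalence recalled in the subsection on $\gamma$-elements then yields that $\mu_{A,\G}$ is an isomorphism.

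The only non-routine ingredient I expect is the identification $(\tau_A(\gamma))_{/\H}=\tau_{A_{/Y}}(\gamma_{/\H})$ and the compatibility of Kasparov transformations with restriction to relatively clopen subgroupoids, which is needed to pass from the Baum--Connes conjecture for $\H$ to the isomorphism statement on $K_*(A\rtr\H)$. This is a naturality statement that ultimately follows from the construction of $\tau$, of $J_\H$, and of the restriction functor $KK^\G_*\to KK^\H_*$ induced by the inclusion $\H\hookrightarrow \G$; all three commute because they are defined at the level of equivariant Kasparov cycles and extend the obvious functoriality, so the verification is formal but should be stated explicitly. Once this is in place, the rest of the argument is a direct application of the machinery developed in the previous sections.
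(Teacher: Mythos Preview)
Your proposal is correct and follows essentially the same approach as the paper: apply Corollary \ref{cor-extension} to $z=\tau_A(\gamma)$, use Lemma \ref{lemma-gamma-restriction} together with the naturality identification $(\tau_A(\gamma))_{/\H}=\tau_{A_{/Y}}(\gamma_{/\H})$ to verify the hypothesis on each $\H\in\D$, and conclude via the idempotency of $\gamma$ and the $\gamma$-element criterion. The paper's proof is a one-sentence version of exactly this argument.
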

  \begin{proof}
  This is a consequence of Corollary \ref{cor-extension} applied to $\tau_A(\ga)$ and of
  Lemma \ref{lemma-gamma-restriction}, by noticing that $\tau_A(\ga)_{/\H}=\tau_A(\ga_{/\H})$ for any relatively clopen subgroupoids $\H$ in $\D$.
  \end{proof}
  We end this paper with an application to the Baum-Connes conjecture with coefficients.

   \begin{corollary}
  Let $\G$  be a  locally   groupoid  provided with a Haar system and which moreover admits a $\ga$-element in sense of 
  \cite{tuhyp}. Assume that there exists a  subset $\D$ of relatively clopen subgroupoids of $\G$  such that
  \begin{enumerate}
  \item every groupoid in $\D$ satisfies the Baum-Connes conjecture with coefficients;
  \item $\G$ has finite $\D$-complexity.
  \end{enumerate}
  Then $\G$ satisfies the Baum-Connes conjecture with coefficients.
  \end{corollary}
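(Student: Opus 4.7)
The plan is to reduce the corollary to Theorem \ref{thm-BC-coarse-decomp} by enlarging $\D$ to a family that is closed under taking relatively clopen subgroupoids while preserving both hypotheses. Concretely, I set
$$\D'=\{\H'\,:\,\H'\text{ is a relatively clopen subgroupoid of some }\H\in\D\}.$$
Then $\D\subseteq\D'$, so $\G$ has $\D'$-fdc (point 2 of Theorem \ref{thm-BC-coarse-decomp} holds automatically since $\widehat{\D}\subseteq\widehat{\D'}$). It thus remains to check two things: $\D'$ is closed under taking relatively clopen subgroupoids, and every $\H'\in\D'$ satisfies the Baum-Connes conjecture with coefficients.

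For the first point, I would prove the transitivity of the ``relatively clopen'' relation: if $\H''$ is relatively clopen in $\H'$ and $\H'$ is relatively clopen in $\H$, then $\H''$ is relatively clopen in $\H$. Openness is clear. For the closed-in-$\H_{Y''}$ condition (where $Y''$ is the unit space of $\H''$), I use the characterization in Lemma \ref{lemma-locally-clopen}: given a compact $K\subseteq\H_{Y''}$, since $Y''\subseteq Y'$, we have $K\subseteq\H_{Y'}$, so $K\cap\H'$ is compact because $\H'$ is relatively clopen in $\H$; then $K\cap\H'\subseteq\H'_{Y''}$, so $(K\cap\H')\cap\H''=K\cap\H''$ is compact because $\H''$ is relatively clopen in $\H'$. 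Consequently, any relatively clopen subgroupoid of an element of $\D'$ is itself relatively clopen in some element of $\D$ and hence lies in $\D'$.

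For the second point, I use Lemma \ref{lemma-gamma-restriction} and Remark \ref{rem-bcc-subgroupoid}. Fix $\H'\in\D'$ and choose $\H\in\D$ with $\H'$ relatively clopen in $\H$. Since $\H$ is relatively clopen in $\G$ and $\G$ admits a $\ga$-element, Lemma \ref{lemma-gamma-restriction} furnishes a $\ga$-element for $\H$. By hypothesis $\H$ satisfies the Baum-Connes conjecture with coefficients, so Remark \ref{rem-bcc-subgroupoid} applied to the pair $(\H,\H')$ gives that $\H'$ satisfies the Baum-Connes conjecture with coefficients. Thus hypothesis (1) of Theorem \ref{thm-BC-coarse-decomp} holds for $\D'$, and applying that theorem to $\D'$ yields the conclusion.

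The only nontrivial step is the transitivity of ``relatively clopen'', which I expect to be the main (though mild) obstacle; everything else is a direct invocation of the machinery already set up, in particular the compatibility of $\ga$-elements with restriction to relatively clopen subgroupoids and the heredity of the Baum-Connes conjecture with coefficients recorded in Remark \ref{rem-bcc-subgroupoid}.
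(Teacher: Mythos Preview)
Your proposal is correct and follows essentially the same route as the paper's proof: define $\D'$ as the relatively clopen subgroupoids of elements of $\D$, observe $\G$ still has $\D'$-fdc, use Remark~\ref{rem-bcc-subgroupoid} (together with Lemma~\ref{lemma-gamma-restriction}) to see that every member of $\D'$ satisfies Baum--Connes with coefficients, and then invoke Theorem~\ref{thm-BC-coarse-decomp}. You are simply more explicit than the paper in verifying, via the transitivity argument with Lemma~\ref{lemma-locally-clopen}, that $\D'$ consists of relatively clopen subgroupoids of $\G$ and is closed under taking relatively clopen subgroupoids; the paper leaves this implicit.
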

  \begin{proof}
  Let $\D'$ be the set of all relatively clopen subgroupoids of elements of $\D$. According to Remark \ref{rem-bcc-subgroupoid}, any groupoid $\H$ in $\D'$ satisfies the Baum-Connes conjecture with coefficients. Since $\G$ has finite  $\D$-complexity, it has  $\D'$-complexity.
  The result is then a consequence of
  Theorem \ref{thm-BC-coarse-decomp}.
  \end{proof}

 \bibliographystyle{plain}

\end{document}